\documentclass[reqno,12pt]{amsart}
\usepackage[margin={2cm,2.5cm}]{geometry}
\usepackage{amssymb}
\usepackage{amsmath}
\usepackage{amsthm}
\usepackage{wasysym}
\usepackage{mathtools}
\usepackage{color}
\usepackage[svgnames]{xcolor}
\usepackage[pdftex]{hyperref}
\usepackage{enumitem}
\usepackage{booktabs}
\usepackage[nospace]{cite}
\usepackage{soul} 
\usepackage{tikz}
\usepackage{tkz-tab}
\usepackage{tkz-fct}  
\usetikzlibrary{calc,intersections}
\definecolor{color1}{RGB}{27,158,119}
\definecolor{color2}{RGB}{217,95,2}
\definecolor{color3}{RGB}{117,112,179}
\definecolor{color4}{RGB}{231,41,138}
\usepackage{pgfplots}
\pgfplotsset{compat=1.9}

\hypersetup{
  pdftitle={},
  pdfauthor={Fran\c cois Genoud, Stefan Le Coz  and Julien Royer},
  pdfsubject={},
  pdfkeywords={}, 
  colorlinks=true,
  linkcolor=DarkBlue  ,          %
  citecolor=DarkRed,        %
  filecolor=DarkMagenta,      %
  urlcolor=DarkGreen,           %
}

\DeclarePairedDelimiter{\norm}{\lVert}{\rVert}
\DeclarePairedDelimiter{\abs}{\lvert}{\rvert}

\newcommand{\scalar}[2]{\left( #1,#2 \right)}

\newcommand{\dual}[2]{\left\langle #1,#2 \right\rangle}

\newcommand{\eps}{\varepsilon}

\newcommand{\N}{\mathbb{N}}
\newcommand{\R}{\mathbb{R}}
\newcommand{\C}{\mathbb{C}}

\DeclareMathOperator{\sech}{sech}
\DeclareMathOperator{\Span}{span}

\renewcommand{\leq}{\leqslant}
\renewcommand{\geq}{\geqslant}

\usepackage{color}

\newcommand{\ep}{\epsilon}

\newcommand{\la}{\langle}
\newcommand{\ra}{\rangle}
\newcommand{\wt}{\widetilde}
\newcommand{\diff}{\,\mathrm{d}}
\newcommand{\dif}{\mathrm{d}}

\newcommand{\und}{\underline}
\newcommand{\intg}{\int_{\Gc}}

\let\le=\leqslant
\let\ge= \geqslant

\newcommand{\disp}{\displaystyle}
\newcommand\txt{\textstyle}

\DeclareMathOperator \rge{ran}

\DeclareMathOperator \im{Im}
\DeclareMathOperator \re{Re}

\newcommand{\Gc}{\mathcal G}
\newcommand{\OC}{\mathcal O_{\kappa}(b,\lambda,\alpha)}
\newcommand{\Hrad}{H^1_{\mathrm{rad}}(\mathcal G)}
\newcommand{\Uc}{\mathcal U} \newcommand{\Vc}{\mathcal V}
\newcommand{\f}{\varphi}

\theoremstyle{plain}
\newtheorem{theorem}{Theorem}[section]
\newtheorem{proposition}[theorem]{Proposition}
\newtheorem{corollary}[theorem]{Corollary}
\newtheorem{lemma}[theorem]{Lemma}

\theoremstyle{definition}

\theoremstyle{remark}
\newtheorem{remark}[theorem]{Remark}

\numberwithin{equation}{section}

\newcommand{\Modop}{\mathcal M_0}
\newcommand{\tModop}{\mathcal M_1}
\newcommand{\Mod}{\mathrm{Mod}}

\begin{document}

\title[Blow-up solutions on nonlinear quantum star graphs]
{A minimal mass blow-up solution on \\ a nonlinear quantum star graph}

\author[F.~Genoud]{Fran\c cois Genoud}

\author[S.~Le Coz]{Stefan Le Coz}
\thanks{This work was supported by the ANR LabEx CIMI (grant ANR-11-LABX-0040) within the French State Programme ``Investissements d'Avenir'' and by the ANR project NQG (grant ANR-23-CE40-0005-01).
The authors are grateful to the Bernoulli Center at EPFL, where decisive discussions were held in the final stage of this work.}

\author[J.~Royer]{Julien Royer}

\address[Fran\c cois Genoud]{Ecole Polytechnique F\'ed\'erale de Lausanne,
\newline\indent
EPFL Station 4
\newline\indent
1015 Lausanne
\newline\indent
Switzerland}
\email{francois.genoud@epfl.ch}

\address[Stefan Le Coz and Julien Royer]{Institut de Math\'ematiques de Toulouse,
  \newline\indent
  Universit\'e Paul Sabatier
  \newline\indent
  118 route de Narbonne, 31062 Toulouse Cedex 9
  \newline\indent
  France}
\email[Stefan Le Coz]{stefan.le-coz@math.univ-toulouse.fr}
\email[Julien Royer]{julien.royer@math.univ-toulouse.fr}

\subjclass[2010]{}

\date{\today}
\keywords{}

\begin{abstract}
We construct a finite-time blow-up solution to the mass-critical focusing nonlinear Schr\"odinger equation on a metric star graph with an arbitrary number of edges. We show that all solutions are global if their mass is smaller than
an explicit constant, called ``minimal mass''.
We then construct a solution with minimal mass and arbitrary energy, which blows up in finite time at the vertex of the star graph.
The blow-up profile and blow-up speed are explicitly characterized.  
The main novelty of the paper is the construction of the blow-up profile in time-dependent domains of singularly perturbed Laplacians.
\end{abstract}

\maketitle

\tableofcontents

\section{Introduction}

\subsection{Setting and main result}

We consider a metric star graph $\Gc$ with $N$ edges of infinite length, as illustrated in
Figure~\ref{fig:star_graph}. 
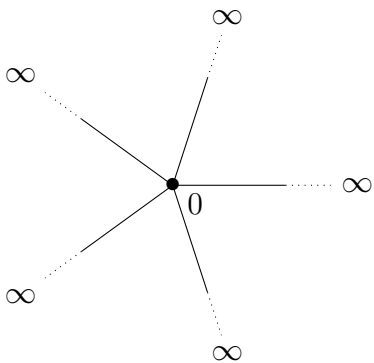
\begin{figure}[htbp!]
  \centering
\begin{tikzpicture}[scale=0.030]
  \node at (0,0) {$\bullet$};
  \node at (10,-8) {$0$};
    \foreach \i in {1,2,3,4,5}{
  \draw[rotate=72*\i] (0,0) -- (50,0) node[anchor={180+72*\i}] {};
    \draw[rotate=72*\i,dotted] (50,0) -- (70,0) node[anchor={180+72*\i}] {$\infty$};
  }
  \end{tikzpicture}    
  \caption{A star graph with $5$ edges.}
  \label{fig:star_graph}
\end{figure}

Let $\gamma \in \R$. We consider on $\Gc$ the focusing nonlinear Schr\"odinger equation
\begin{equation}
  \label{eq:nls}
  i u_t + u_{xx} - \gamma \delta u +| u|^4 u=0,
\end{equation}
where $u_{xx}$ denotes the Laplacian of $u$ on each edge,
$\delta$ is the Dirac mass at the vertex and $\gamma \in \R$. 
The main conserved quantities associated with this nonlinear evolution equation are the energy 
\begin{equation}
\label{eq:def_E}
E(u):=\frac12 \norm{u_{ x}}_{L^2(\mathcal G)}^2+\frac\gamma 2| u(0)|^2-\frac16\| u\|_{L^6(\mathcal G)}^6,
\end{equation}
where $u(0)$ is the common value of each component of $u$ at 0, and the mass
\begin{equation}
\label{eq:def_M}
M(u):=\frac12\| u\|_{L^2(\mathcal G)}^2.
\end{equation}

The exact description of $\mathcal G$, the precise interpretation of~\eqref{eq:nls},
and the definition of the usual function spaces on $\mathcal G$ will be given in Section~\ref{sec:star_graphs}.

Our main objective in this paper is to construct a finite-time blow-up solution of~\eqref{eq:nls}. 
Let ${\mathsf Q}:\R\to\R$ be defined by
\begin{equation} \label{def:Q}
\mathsf Q(x)=3^{\frac14}\sech^{\frac12}(2x).
\end{equation}
The function ${\mathsf Q}$ is the positive even ground state of 
the focusing mass-critical nonlinear Schr\"odinger equation on the line. It is well known that the mass of $\mathsf Q$, given by
$$
M_{\mathsf Q}:=\frac12\int_\R \mathsf Q^2 \diff x = \frac {\pi\sqrt{3}}4
$$ 
gives the 
threshold between global existence and finite time blow-up for the associated Cauchy problem. 
It turns out that $M_{\mathsf Q}$ also determines the mass threshold for global existence of solutions of~\eqref{eq:nls} on the graph. 
Indeed, we shall prove in Section~\ref{sec:global} that, if the initial condition
$u_0$ satisfies
$$
M(u_0)<\min\left\{\txt\frac N2,1\right\} M_{\mathsf Q},
$$ 
then the corresponding solution of~\eqref{eq:nls} is global.

A function on $\Gc$ is called {\em radial} 
if all its components on the edges of the graph are equal (see Section~\ref{sec:star_graphs}).
Let $Q$ be the radial 
function on $\mathcal G$ consisting 
of a half-copy of $\mathsf Q$ on each edge.
If we restrict ourselves to radial solutions, then the threshold for global existence of solutions of~\eqref{eq:nls} becomes
\begin{equation}\label{eq:min_mass}
M( Q)=\txt\frac N2 M_{\mathsf Q}.
\end{equation}
 
In the attractive case $\gamma<0$,
we construct a radial minimal mass blow-up solution, that is,
a solution of~\eqref{eq:nls} with mass~\eqref{eq:min_mass} which blows up in finite time at the vertex. 
More precisely, our main result is the following theorem. 

\begin{theorem} \label{th:blowup}
Suppose $\gamma<0$. Let $E^\star\in\mathbb R$. There exist $t_0<0$ and a radial
solution $u\in C([t_0,0),H^1(\mathcal G))$ of~\eqref{eq:nls}  such that 
$$
M(u)=M(Q), \quad E(u)=E^\star,
$$
and which blows up at $t=0$ as
\begin{equation}\label{blowup_speed}
\|u_{ x}(t)\|_{L^2(\mathcal G)}   \mathop{\mbox{$\sim$}}\limits_{t \to 0^-} \frac {C_\gamma}{|t|^{2/3}},
\end{equation}
for an explicit constant $C_\gamma>0$.%
\end{theorem}

To the best of our knowledge, the present work is the first construction of a finite time blow-up solution for a nonlinear Schr\"odinger equation on a quantum graph.

\subsection{Mass-critical NLS on the line}

We recall some well-known facts for the classical mass-critical nonlinear 
Schr\"odinger equation on the line
\begin{equation}
  \label{eq:1}
  iu_t+u_{xx}+|u|^4u=0,
\end{equation}
where $u:\mathbb R_t\times \mathbb R_x\to\mathbb C$.
The  Cauchy problem for~\eqref{eq:1} is well-posed in the energy space $H^1(\mathbb R)$, we have conservation of energy, mass (and momentum) and the blow-up alternative holds. Of particular interest is the standing wave solution
\(
e^{it} \mathsf Q(x),
\)
where the profile $\mathsf Q:\mathbb R\to \mathbb R$ (already defined in~\eqref{def:Q}) is the unique even positive solution in $H^1(\mathbb R)$ of the differential equation
\begin{equation} \label{eq:Q}
-\mathsf Q''+ \mathsf Q- \mathsf Q^5=0.
\end{equation}
We already mentioned that the mass of $\mathsf Q$ gives the threshold between global existence and blow-up. Precisely, any solution of~\eqref{eq:1} with mass smaller that $M_{\mathsf Q}$ is global, whereas there exists a minimal mass blow-up solution, i.e.~a solution of~\eqref{eq:1} with mass $M_{\mathsf Q}$ which blows up in finite time. It turns out that such a solution can be found as an explicit pseudo-conformal transform of the standing wave. Indeed, let
\begin{equation}\label{eq:PC}
S(t,x)=\frac{1}{\sqrt{|t|}} \mathsf Q\Big(\frac{x}{|t|}\Big)e^{-i\frac{|x|^2}{4|t|}}e^{\frac it}.
\end{equation}
Then $S$ is a solution of~\eqref{eq:1} and we have
\begin{equation}\label{eq:PC_speed}
\norm{S(t)}_{L^2(\R)}=\norm{\mathsf Q}_{L^2(\R)},
\quad \norm{\partial_x S(t)}_{L^2(\R)}\mathop{\mbox{$\sim$}}\limits_{t \to 0^-}\frac{C}{|t|}.
\end{equation}
In particular, $S$ blows up at $t=0$ with the so-called \emph{pseudo-conformal speed} 
$|t|^{-1}$. Furthermore, up to the symmetries of the equation, 
$S$ is the unique minimal mass blow-up solution (see~\cite{Me93}).

On the graph $\Gc$ with $\gamma=0$, the function constructed by considering a half-copy of $S(t)$ on each edge is a solution of~\eqref{eq:nls}, which blows up at time $T=0$ at the central vertex with pseudo-conformal speed~\eqref{eq:PC_speed}.
When $\gamma<0$, one cannot construct a simple solution based on $S(t)$, but the proof of
Theorem~\ref{th:blowup} (see, in particular, Proposition~\ref{prop:profile}) suggests that radial minimal mass blow-up solutions should also be governed by the profile $\mathsf Q$ in this case.
In the repulsive case $\gamma>0$, there are no radial minimal mass solutions 
blowing up in finite time at the vertex; see Section~\ref{sec:global}.

\subsection{Minimal mass blow-up solutions}

There exists an important literature about the construction of minimal mass 
blow-up solutions in various settings.
For the classical pure power mass-critical nonlinear Schr\"odinger equation on $\R^d$, 
a minimal mass blow-up solution is explicitly obtained as a pseudo-conformal transform 
of a standing wave, in any dimension, similarly to~\eqref{eq:PC} for $d=1$. 
In the seminal paper~\cite{Me93}, Merle showed that it is the unique 
minimal mass blow-up solution up to the symmetries of the equation.  
Existence and uniqueness of a minimal mass blow-up solution for NLS equations which do not 
possess the pseudo-conformal symmetry is more involved.
The study was initiated by Merle himself in~\cite{Me96}, where he established 
a sufficient condition for the existence of a minimal mass blow-up solution in the case 
of a Schr\"odinger equation with inhomogeneous mass-critical nonlinearity $k(x)|u|^{\frac4d}u$. 
Further contributions (see e.g.~Banica, Carles, Duyckaerts~\cite{BaCaDu11}, 
Bourgain and Wang~\cite{BoWa97}, Krieger and Schlag~\cite{KrSc09}) 
treated the problem perturbatively from the homogeneous case, and required a 
flatness assumption on~$k$. A nonperturbative approach was called for in order 
to remove the flatness assumption.
The breakthrough came from the work of Rapha\"el and Szeftel~\cite{RaSz11}, 
in which existence and uniqueness of a minimal mass blow-up solution for the 
inhomogeneous mass-critical nonlinearity was established. 
The approach of~\cite{RaSz11} is very robust and was applied for instance 
by Krieger, Lenzmann and Rapha\"el~\cite{KrLeRa13} to the critical half-wave equation, 
or by Martel and Pilod~\cite{MaPi17} to the Benjamin-Ono equation.
The construction of the profile of the minimal mass blow-up solution 
was later refined by Le Coz, Martel, Rapha\"el~\cite{LeMaRa16} in the context of 
the nonlinear Schr\"odinger equation with a double power nonlinearity, 
where a minimal mass solution exhibiting a new blow-up speed was constructed. 
The approach of~\cite{LeMaRa16,RaSz11} was successfully implemented by Matsui~\cite{Ma20a,Ma20b,Ma21a,Ma21b,Ma21c,Ma21d,Ma21e,Ma23} for various Schr\"odinger equations 
(e.g.~with singular potentials or with a Hartree nonlinearity). 
Several improvements to the work~\cite{LeMaRa16} have been made by Matsui, 
in particular the observation that the blow-up profile is more naturally constructed 
in the virial space instead of $H^1$. Recently, the paper~\cite{LeMaRa16} 
was transposed by Tang and Xu~\cite{TaXu21} to 
the nonlinear Schr\"odinger equation on the line with a Dirac mass at the origin.

\subsection{Star graphs}

On the other hand, there is also a wide literature on nonlinear quantum graphs which
cannot be shortly summarized. 
For an introduction to nonlinear Schr\"odinger equations on quantum graphs 
and their physical motivations, one may refer to the survey of Noja~\cite{No14}. 
For star graphs in particular, one may refer to the recent monograph of Angulo Pava 
and Cavalcante de Melo~\cite{AnCa19}. In this introduction, 
we will only present the results close to our work,
along with a very partial sample of the rest of the literature. 
Many of the works devoted to nonlinear quantum graphs focus on
existence and variational characterizations of standing waves. 
Among the earliest studies, one finds the works by Fukuizumi in collaboration 
with (separately) Jeanjean, Le Coz, Ohta and Ozawa~\cite{FuJe08,FuOhOz08,LeFuFiKsSi08}, 
which are devoted to the case of a line with a Dirac mass at the origin 
(equivalent to a $2$-star graph). The first author of the present paper, 
together with Malomed and Weissh\"aupl~\cite{GeMaWe16},
studied orbital stability of standing waves for the $2$-star graph with a cubic-quintic
nonlinearity.
The variational characterization of standing waves on star graphs was considered by 
Adami, Cacciapuoti, Finco and Noja 
\cite{AdCaFiNo12,adami2012stationary,AdCaFiNo14a, AdCaFiNo14, AdCaFiNo16}. 
Further developments for the study of standing waves on generic quantum graphs 
started with Adami, Serra and Tilli~\cite{AdSeTi16, AdSeTi17a, AdSeTi17b}, 
where a topological obstruction for the existence of ground states 
on quantum graphs was discovered.
Elements such as well-posedness of the Cauchy problem, Strichartz estimates  
and conservation laws on star graphs can be found in the work of 
Adami, Cacciapuoti, Finco and Noja~\cite{AdCaFiNo11} (along with the analysis of the collision of a fast solitary wave with the vertex, which is the main object of the paper). 
The $2$-star graph with non-zero boundary conditions has been investigated 
by Ianni, Le Coz and Royer~\cite{IaLeRo17}. 
The case of a loop (which is equivalent to a segment with periodic boundary conditions) 
was studied by Gustafson, Le Coz and Tsai~\cite{GuLeTs17}. 
Absence of scattering of global solutions towards standing waves was established by 
Aoki, Inui, Mizutani~\cite{AoInMi21}, while scattering on the $2$-star graph was obtained
by Banica and Visciglia~\cite{BaVi16}.
Exponential stability in the presence of damping on one branch was obtained by 
Ammari, Bchatnia and Mehenaoui~\cite{AmBcMe21}. 
Existence of ground states on star graphs with finite and infinite egdes was studied by 
Li, Li and Shi~\cite{LiLiSh18}.
On balanced star graphs (i.e.~star graphs with adjusted coefficients on the edges, see~\cite{SoMaSaSaNa10}), Kairzhan, Pelinovsky and Goodman~\cite{KaPeGo19} proved the nonlinear instability (by drift) of spectrally stable shifted states. 
Standing waves of the nonlinear Schr\"odinger equation with logarithmic nonlinearity 
was considered by Goloshchapova~\cite{Gol19} 
(see also the earlier work of Ardila~\cite{Ar17} for well-posedness and existence results). 
Instability of non-ground state standing waves on star graphs was obtained by 
Kairzhan~\cite{Ka19} in the repulsive and attractive cases. 
Instability by blow-up of standing waves on star graphs for 
mass-supercritical nonlinearities was proved by Goloshchapova and Ohta~\cite{GoOh20}.   
Stability and instability results were obtained by Angulo Pava and Goloshchapova~\cite{AnGo18a,AnGo18b} using the extension theory of symmetric operators for star graphs 
with $\delta$ or $\delta'$ interaction at the vertex. 
Star graphs with $\delta_s'$ conditions were considered by Goloshchapova in~\cite{Go22}.
Recently, Besse, Duboscq and Le Coz~\cite{BeDuLe22A,BeDuLe22B} developed a Python Library~\cite{Grafidi} for the numerical simulation of Schr\"odinger equations on quantum graphs. A numerical approach for the calculation of ground states is studied in~\cite{BeDuLe22A}, while the implementation of the library and further experiments are presented in~\cite{BeDuLe22B}.

\subsection{Main novelty of our construction}\label{sec:novelty}

Our proof of Theorem~\ref{th:blowup} follows the 
strategy laid down in~\cite{LeMaRa16,RaSz11} with some improvements obtained by 
Matsui~\cite{Ma20a,Ma20b,Ma21a,Ma21b,Ma21c,Ma21d,Ma21e,Ma23}. 
In particular, we shall work directly in the virial space, which provides a natural setting 
and allows us to avoid the 
localization procedure of the virial-energy functional used in~\cite{LeMaRa16}. 
We have also reformulated the blow-up profile expansion borrowed from~\cite{LeMaRa16}, 
thereby making it more tractable for the proof. In the particular case of the $2$-star graph, 
we recover the result stated in~\cite{TaXu21} for the mass-critical
NLS on the line with an attractive Dirac mass. In fact, as explained in more details
below, some of the arguments exposed in~\cite{TaXu21} are not completely rigorous,
thus the present work also provides the first exact treatment of the problem on the line.

One of the main features of this article is a rigorous treatment of the Dirac 
distribution $\delta$ (also called ``Dirac mass'' here). 
The formal differential operator $\partial_{xx}-\gamma\delta$ appearing
in~\eqref{eq:nls} will be given a precise definition in Section~\ref{sec:star_graphs}
as a Laplace operator perturbed by a boundary condition at the vertex of the graph.
As will be seen in Section~\ref{sec:outline}, we will thus need to adapt the method from 
\cite{LeMaRa16,RaSz11} in order to construct an approximate blow-up solution which remains
inside the domain of this operator at all times. In rescaled variables
suitable to our construction, the domain itself depends on time, thus all coefficients
appearing in the construction of the approximate blow-up profile (see~\eqref{def:P}) also
depend on time. Aside from the technical difficulties arising from this, it is a noteworthy 
realization of our work that the method from~\cite{LeMaRa16,RaSz11} is robust enough 
to tackle equations for which the domain of the linear operator plays a crucial role in the analysis.

Regarding equations like~\eqref{eq:nls}, the distribution $\delta$
is often referred to as ``delta potential'' in the literature.
We find this terminology unfortunate.
Indeed, it is too often the case that $\delta$ is treated as if it were a classical point-wise defined potential, with the misleading notation $\delta(x)$ which doesn't really
make any sense, since $\delta$ acts on functions and does not have point values. 
Of course, it is well known that no function can represent the distribution $\delta$. 

In fact, while the statement of the main result of~\cite{TaXu21} certainly holds true (and we recover the same result in the case of the $2$-star graph), 
it becomes apparent upon careful examination that the proof provided in~\cite{TaXu21} lacks the necessary precision to fully substantiate this result.
This is mainly due to formal manipulations handling $\delta$ as a function, not a distribution.

For instance, the common confusion between the distribution $\delta$ and a point-wise defined potential is reinforced by the notation $g(u)=\mu \delta u$, introduced by the authors at the bottom of page 1732. There is indeed a crucial difference between $f(u)=|u|^4u$, which takes a function $u$ and returns a power of this function, therefore also a function (precisely, $f:H^1(\mathbb R)\to H^1(\mathbb R)$) and $g(u)=\mu \delta u$, which takes a function but returns a \emph{distribution} (precisely, $g:H^1(\mathbb R)\to \mathcal D'(\mathbb R)$). 
Therefore, $f(u)(x)$ makes sense while $g(u)(x)$ is meaningless. The two terms are, however, treated most of the time on the same level in~\cite{TaXu21}.

An example of the confusion generated by this inexact notation appears in the definition of $\beta$
in equation (2.16) of \cite{TaXu21}. Indeed, the authors have defined $G(u)=\frac12\mu\delta |u|^2$  
on page 1732 (hence $G(u)\in H^{-1}(\mathbb R)$) 
but write $G(Q)=\frac12 Q(0)^2$ in (2.16) (as though $G(Q)\in\mathbb R$).
Another example is the estimate of the remainder $\Psi_K$ defined in \cite[(2.7)]{TaXu21}
and its supposed point-wise derivative $\partial_y\Psi_K$. Indeed, $\Psi_K$ contains $\delta$ terms (appearing in the $F_{j,k}^\pm$). As such, $\Psi_K$ does not have point-values, let alone a point-wise  derivative.
Yet another example occurs in \cite[(2.26)]{TaXu21}. As already mentioned, $\Psi_K$ contains $\delta$ terms. In addition, $\tilde E$ contains the term $\int_\R G(P_b)\diff y$, which can be interpreted as $\mu|P_b(0)|^2$. Hence $\tilde E'$ contains a $\delta$ corresponding to this term. The term $\langle i\tilde E'(\lambda,P_b),\Psi_Ke^{-i{b|y|^2}/{4}}\rangle$ in~\cite[(2.26)]{TaXu21} will therefore contain a term of the type $\dual{\delta}{\delta}$, which is not a meaningful expression.

\subsection{Organization of the paper and notation}
The rest of the paper is organized as follows. Section~\ref{sec:outline} provides a detailed outline
of the construction of our blow-up solution. The proof of Theorem~\ref{th:blowup} is given there,
assuming a number of propositions. Section~\ref{sec:cauchy} presents 
the Cauchy theory for~\eqref{eq:nls} in the spaces relevant for our analysis. 
We give in Section~\ref{generalized_kernel.sec} some detailed properties of linearized operators.
In Sections~\ref{sec:profile} to~\ref{unifest_proof.sec}, 
the propositions used in the proof of Theorem~\ref{th:blowup} are proved. Appendix~\ref{sec:dynamical_systems}, devoted to the model dynamical system, closes the paper.

We shall write $f\lesssim g$ or $g\gtrsim f$ to mean that there is a universal constant $C>0$
(i.e.~which does not depend on the dynamical variables) such that $f\le Cg$. We will
write $f \sim g$ as $t\to 0^-$ (or $s\to+\infty$) if $f/g\to 1$ as $t\to 0^-$ (or $s\to+\infty$).
When no confusion is possible, we may simply write $L^2, H^1$, etc. instead of
$L^2(\mathcal{G}), H^1(\mathcal{G})$, etc. The inner product  on $L^2(\mathcal G)$ will be denoted by $\scalar{\cdot}{\cdot}_{L^2}$ 
or simply $\scalar{\cdot}{\cdot}$. The duality product between $H^1(\mathcal{G})$ and $H^1(\mathcal{G})^\star$ will be denoted by $\dual{\cdot}{\cdot}$.

\section{Outline of the proof}\label{sec:outline}

In this section we introduce the required functional setting and we prove Theorem~\ref{th:blowup} 
using a number of auxiliary results, which are proved in the following sections.

\subsection{Functional setting on the star graph}\label{sec:star_graphs}

Let $\mathcal G$ be a \emph{metric star graph} with $N$ edges, 
i.e.~a vertex $0$ to which are connected $N$ edges $e_1,\dots,e_N$ of infinite length. 
We thus identify each edge $e$ with the interval $I_e=\R_+:=[0,\infty)$, 
the left endpoint $0$ corresponding to the vertex. 
A schematic representation of a star graph is given in Figure~\ref{fig:star_graph}. 

A function $  u:\mathcal G\to\mathbb C$ is a collection of functions 
$u_j:I_{e_j}\to\mathbb C$, $j=1,\dots,N$. 
It will be called
{\em radial} if all its components $u_j:\R_+\to\mathbb C$ are identical. 
In this case, we will identify $  u$ with any one 
of its components.
Hence, $  x\in \mathcal G$ will be identified with $x\in\R_+$, and we will simply interpret $  u$ as a complex-valued function of $x\in\R_+$. 

Lebesgue and Sobolev spaces on $\mathcal G$ are defined by
\[
L^p(\mathcal G)=\bigoplus_{j=1}^N L^p(I_{e_j}),\quad H^s(\mathcal G)=\bigoplus_{j=1}^N H^s(I_{e_j}),
\]
with norms
$$
\|  u\|_{L^p(\mathcal G)}^p=\sum_{j=1}^N \|u_j\|_{L^p(\R_+)}^p, \quad 
\|  u\|_{H^s(\mathcal G)}^2=\sum_{j=1}^N \|u_j\|_{H^s(\R_+)}^2.
$$

We equip $\mathcal G$ with the Laplace operator with Dirac condition at the vertex, i.e.~the selfadjoint operator $H_\beta$ defined by
\[
  \begin{aligned}
    H_\beta:D(H_\beta)\subset L^2(\mathcal G)&\to L^2(\mathcal G),\\
    (u_1,\dots,u_N)&\mapsto (-\partial_{xx}u_1,\dots,-\partial_{xx}u_N),
  \end{aligned}
\]
where the domain $D(H_\beta)$ is denoted and defined by
\begin{equation} \label{dom:Hgamma}
  D_\beta \equiv D(H_\beta):=\left\{  u\in H^2(\mathcal G): \forall j,k=1,\dots,N,\,  u(  0):=u_j(0)=u_k(0),\;\sum_{j=1}^Nu_j'(0)=\beta   u(  0)\right\}.
\end{equation}
We use here a general parameter $\beta$ in order to cover the case $\beta=\gamma$ in 
the original equation~\eqref{eq:nls}, as well as $\beta=\gamma\lambda$ in the 
rescaled variables (see~\eqref{eq:nls-v}).

We observe that the domain contains a continuity condition at $0$ and a jump condition for the derivatives. For $\beta=0$ we recover the classical Kirchhoff-Neumann conditions. For $\beta\neq0$ and $N=2$, we recover the case of the line with a Dirac mass at $0$. 

The quadratic form associated with $H_\beta$ is
\begin{equation*}\label{quadratic_form}
q_\beta(  u):=\dual{H_\beta   u}{  u}=\sum_{j=1}^N\norm{u_j'}_{L^2(\R_+)}^2
+\beta |  u(  0)|^2,
\end{equation*}
defined on the domain  
\[
H^1_D(\mathcal G)\equiv D(q_\beta):=\left\{  u\in H^1(\mathcal G):\forall j,k=1,\dots,N,\,  u(  0):=u_j(0)=u_k(0)\right\}.
\]
Observe that the domain of the quadratic form retains the continuity at the vertex, but the jump condition on the derivatives is now transposed to the expression of the quadratic form instead of the domain. 

In this paper, we will mostly work in the subspace $\Hrad \subset H^1_D(\mathcal G)$ of radial functions,
defined by
\[
\Hrad=\left\{  u\in H^1(\mathcal G):\forall j,k=1,\dots,N,\,u_j=u_k\right\}.
\]
We also define
\[
L^2_{\mathrm{rad}}(\Gc) =
\left\{  u\in L^2(\mathcal G):\forall j,k=1,\dots,N,\,u_j=u_k\right\}.
\]
With the convention of identifying the function $u$ on $\Gc$ with any of its components, 
we have 
\[
\norm{u}_{L^2(\Gc)}^2 = N \norm{u}_{L^2(0,\infty)}^2, \quad \forall u\in L^2_{\mathrm{rad}}(\Gc).
\]

Note that the operator $H_\beta$ can also be extended to an operator $\mathsf H_\beta$  
from $H^1_D(\mathcal G)$ to its dual $H^1_D(\mathcal G)^\star$. 
More precisely, we may write
\[
\mathsf H_\beta=-\partial_{ {xx}}+\beta \delta
\]
where, for every $  u \in H^1_D(\mathcal G)$,
\[
\la -  u_{ {xx}},  v\ra:=\re\int_\mathcal{G}   u_{  x} \overline{  v_{  x}}\diff   x
\equiv \re\sum_{j=1}^N\int_0^\infty  \partial_x u_j \overline{\partial_x v_j}\diff x,
\quad \forall   v \in H^1_D(\mathcal G),
\]
\[
\la\delta   u,  v\ra
:=\re\big(   u(  0)\overline{  v(  0)}\big), \quad \forall   v \in H^1_D(\mathcal G). 
\]

This allows us to split the operator $\mathsf H_\beta$ into two parts, $-\partial_{ {xx}}$ and $\beta\delta$, whenever needed. We emphasize that whenever 
$-\partial_{ {xx}}$ and $\beta\delta$ are treated separately, they are always taken in the $H^1-(H^1)^\star$ sense (the operator $H_\beta$ as an $L^2-L^2$ operator with domain cannot be split).

Using the operator $H_\gamma$ defined above, equation~\eqref{eq:nls} can now be precisely
interpreted as
\begin{equation}
  \label{eq:nls-Hg}
  i u_t-H_\gamma u+|u|^4 u=0,
\end{equation}
for an unknown function $u:\R\times \Gc \to \C$.

\subsection{Cauchy problem}
We start by stating here the main results concerning the Cauchy problem for~\eqref{eq:nls-Hg}. The well-posedness in the space $H^1_D(\Gc)$ can be obtained by a classical line of arguments. For our purposes, we will need the solution to live in the domain of $H_\gamma$ as well as in weighted spaces. 
For $k \in \N$ we set
\begin{equation}
\label{eq:def_Sigma_k} 
\Sigma^k(\mathcal G)=\{  u\in H^k(\mathcal G):\norm{  u}_{\Sigma^k}<\infty\},\quad \norm{  u}_{\Sigma^k}^2=\sum_{0\leq \alpha,\beta\leq k}\left(\norm{  x^\alpha   u}_{L^2}^2+\norm{\partial_x^\beta   u}_{L^2}^2\right).
\end{equation}

The well-posedness results are summarized in the following proposition. 

\begin{proposition}
\label{prop:cauchy}
    Let $u_0\in H^1_D(\mathcal G)$ be an initial data for the problem~\eqref{eq:nls-Hg}. Let $t_0 \in \R$. Then there exists a unique maximal solution
\[
  u\in C\left((T_{\min},T_{\max}),H^1_D(\mathcal G)\right) \cap  C^1\left((T_{\min},T_{\max}),H^1_D(\mathcal G)^\star\right)
\]
(with $T_{\min} < t_0 < T_{\max}$) such that $u(t_0)=u_0$. The blow-up alternative holds and there is continuous dependence with respect to the initial data. The energy $E$ and the mass $M$, defined in~\eqref{eq:def_E} and~\eqref{eq:def_M}, are conserved along the time evolution. Moreover, the following properties hold.
\begin{enumerate}[label={\upshape(\roman*)}]
\item If $u_0$ is radial, then so  is $u(t)$ for any $t\in(-T_{\min},T_{\max})$.
\item If $u_0\in D_\gamma$, then $u$ verifies
\[
  u\in C\left((T_{\min},T_{\max}),D_\gamma\right) \cap C^1\left((T_{\min},T_{\max}),L^2(\mathcal G)\right).
\]
\item If $u_0\in H^1_D(\mathcal G)\cap\Sigma^1(\mathcal G)$, then $u$ verifies
\[
u\in C\left((T_{\min},T_{\max}),H^1_D(\mathcal G)\cap\Sigma^1(\mathcal G)\right)\cap  C^1\left((T_{\min},T_{\max}),H^1_D(\mathcal G)^\star\right).
\]
\item If $u_0\in D_\gamma\cap\Sigma^2(\mathcal G)$,
then $u$ verifies
\[
u\in C\left((T_{\min},T_{\max}),D_\gamma\cap\Sigma^2(\mathcal G)\right)\cap C^1\left((T_{\min},T_{\max}),L^2(\mathcal G)\right).
\]
\end{enumerate}
\end{proposition}

We will also need a dependency property with respect to the initial data slightly different from the usual one.

\begin{lemma} \label{lem:conv-L2-solution}
Let $(  u_{n,0})_{n \in \N^*}$ and $  u_0$ in $H_D^1(\mathcal G)$ such that $  u_{n,0} \to   u_0$ in $L^2(\mathcal G)$. Let $(  u_n)$ and $  u$ be the solutions of~\eqref{eq:nls-Hg} such that $  u_n(t_0) =   u_{n,0}$ and $  u(t_0) =   u_0$. Let $J$ be a compact interval of $\R$ such that $(  u_n)$ and $  u$ are defined on $J$. We assume that $\sup_{n \in \N^*} \|  u_n\|_{L^\infty(J,H^1(\mathcal G))} < \infty$. Then, as $n \to \infty$, we have
\[
\|  u_n-  u\|_{L^\infty(J,L^2(\mathcal G))} \to 0.
\]
\end{lemma}

\subsection{Change of variables}

The approach we adopt is based on a change of variables transforming a finite time blow-up
solution into a solution that is global in positive time.
We seek a radial solution $  u$ of~\eqref{eq:nls-Hg} in the form 
\begin{equation}\label{chvarw}
  u(t,  x)=\frac{1}{\sqrt{\lambda(s)}}  w(s,  y)e^{i(\theta(s)-b(s)  y^2/4)},\quad
t<0, \   x\in\mathcal G,
\end{equation}
where the new variables $s$ and $  y$ satisfy
\begin{equation}\label{chvar}
\frac{\dif s}{\dif t}=\frac{1}{\lambda(s)^2}, \qquad   y=\frac{  x}{\lambda(s)}.
\end{equation}
We will construct
$  w$ global and bounded in $H^1_D(\mathcal G)$, together with 
{\em modulation parameters} $\lambda(s)$, $b(s)$ and $\theta(s)$ such that $\lambda(s)>0$,
$$ 
\lambda(s)\to 0^+, \quad b(s)\to 0^+, \quad \theta(s)\to\infty, \quad s\to+\infty.
$$
This type of ansatz is common in blow-up analysis (see the references given in introduction for similar constructions).
The exact definition of the rescaled time $s$ will appear in Section~\ref{unifest_proof.sec}. 
By straightforward calculations, $  u$ solves~\eqref{eq:nls-Hg} if and only if $  w$ solves 
\begin{multline}\label{nls_w}
i  w_s-H_{\gamma\lambda}  w-  w+ |  w|^4  w
+(1-\theta_s)  w+\Big(b_s-b^2-2b\frac{\lambda_s}{\lambda}\Big)\frac{y^2}{4}  w
-i\Big(b+\frac{\lambda_s}{\lambda}\Big)\Lambda   w = 0,
\end{multline}
where the scaling operator $\Lambda$ is defined for each component $w_j$ of $  w$ by
\begin{equation}
\label{eq:dilation}
    \Lambda w_j(y_j)=\frac12w_j(y_j)+y_jw_j'(y_j)=\frac \dif {\dif\lambda} \big( \sqrt \lambda  w_j (\lambda y_j) \big) \big|_{\lambda = 1}, \quad j=1,\dots,N.
\end{equation}

\subsection{Blow-up profile}

To prove Theorem~\ref{th:blowup}, we seek $  w$ in the form 
$$
  w(s,  y)=  P(s,  y)+  h(s,  y),
$$ 
for a suitable approximate solution profile $  P$. The result will then follow 
from~\eqref{chvarw} and~\eqref{chvar} by proving that
$\lambda(s)\sim s^{-2}$ and $  h(s)\to0$ in a well-chosen norm, as $s\to+\infty$.

The blow-up profile $  P$ is constructed as an approximate solution of the 
auxiliary equation
\[
i  P_s - H_{\lambda\gamma }  P -  P+f(  P)+ \alpha\frac{y^2}{4}  P=0,
\]
with
$$
f(z)=|z|^4z, \quad z\in\mathbb{C}.
$$

For $\kappa \in \N^*$, we define
\begin{equation}\label{def_of_Sigma}
\Theta_\kappa = \Big\{ (j,k) \in \N \times \N^* \, : \, \frac {j} 2 + k < \kappa \Big\}.
\end{equation}

For $\nu \in \N$ we denote by $C^\nu_{\exp}$ the set of radial functions $u$ on $\Gc$ 
which are of class $C^\nu$ on each edge and such that $\|u\|_{C^\nu_{\exp,\rho}} < \infty$ 
for some $\rho > 0$, where
\[
\|  u\|_{C^\nu_{\exp,\rho}}  
= \sup_{0\leq m \leq \nu} \sup_{y \in \Gc} e^{\rho \abs y} \abs {u^{(m)}(y)}.
\]
We denote by $C^\infty_{\exp}$ the intersection of the $C^\nu_{\exp}$ for $\nu \in \N$. 
Note that $C^\infty_{\exp}$ is stable under multiplication by a polynomial.

Let $\beta \in (-N,N)$. The equation
\begin{equation} \label{eq:Qbeta}
    H_\beta Q_\beta + Q_\beta - |Q_\beta|^4 Q_\beta =0,
\end{equation}
has a unique radial non-trivial solution in $D(H_\beta)$. Note that uniqueness holds only for radial solutions and there exist non-radial solutions to~\eqref{eq:Qbeta}, see e.g.~\cite{adami2012stationary}.
It is given on each edge by 
\begin{equation}\label{eq:def_of_Q_beta}
    Q_\beta(y)=Q(y-\tau_\beta) = \frac {3^{\frac 14}}{\sqrt {\cosh(2(y-\tau_\beta))}}, \quad \tau_\beta = \frac12\tanh^{-1}\left(\frac\beta N\right). 
\end{equation}
We provide a representation of the function $  Q_\beta$ in Figure~\ref{fig:Q_gamma} (picture made with the Grafidi library, see~\cite{Grafidi,BeDuLe22A,BeDuLe22B}).
\begin{figure}
    \centering
    \includegraphics[width=0.5\textwidth]{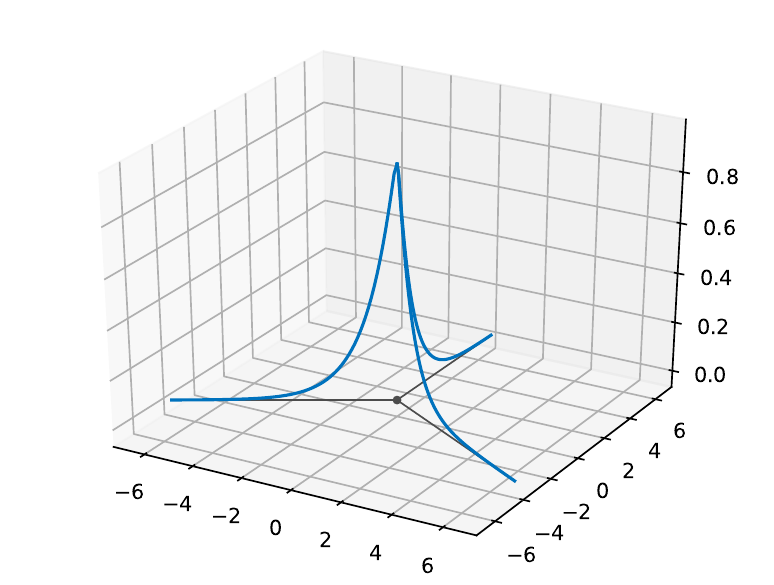}
    \caption{The function $  Q_\beta$ on a $3$-star graph for $\beta=-5/2$}
    \label{fig:Q_gamma}
\end{figure}
Notice for future reference that 
\begin{equation} \label{eq:der-Q-beta}
\partial_\beta Q_\beta(y) = -\frac N {2(N^2-\beta^2)} Q'(y-\tau_\beta)= -\frac N {2(N^2-\beta^2)} Q_\beta'(y). 
\end{equation}

We define the constant
\begin{equation} \label{def:alpha-01}
\alpha_\star  = -2\gamma\frac{ Q(0)^2}{\|yQ\|_{L^2(\mathcal G)}^2}
\end{equation}
which will play a central role in the analysis. Note that $\alpha_\star > 0$ for $\gamma < 0$.

The following proposition will be proved in Section~\ref{sec:profile}. 

\begin{proposition}[Approximate blow-up profile]
\label{prop:profile}
Let $\kappa\in\N^*$.
 There exist families $(  P_{j,k,\beta})$ and $(\alpha_{j,k,\beta})$ which depend on $(j,k,\beta) \in \Theta_\kappa \times (-N,N)$ and satisfy the following properties.
\begin{enumerate}[label={\upshape(\roman*)}]
\item For $(j,k) \in \Theta_\kappa$, $\beta \in (-N,N)$ and $\ell \in \N$, we have $P_{j,k,\beta} \in D_{\beta}$ and $\partial_\beta^\ell P_{j,k,\beta} \in  C^\infty_{\exp}$.
\item For $(j,k) \in \Theta_\kappa$ and $\beta \in (-N,N)$, we have $\alpha_{j,k,\beta} \in \R$. Moreover, $\alpha_{j,k,\beta} = 0$ if $j$ is odd.

\item In particular, $\alpha_{0,1,0} = \alpha_\star$.

\item Given an interval $J$ of $\R$, $b \in C^1(J,\R)$ and $\lambda \in C^1(J,\R_+^*)$ such that $\gamma \lambda(s) \in (-N,N)$ for all $s \in J$, if we set 
\begin{equation} \label{def:P}
  P = P_{b,\lambda}  =   Q_{\gamma \lambda} + \sum_{(j,k) \in \Theta_\kappa} (ib)^j \lambda^k   P_{j,k,\gamma \lambda },
\end{equation}
\begin{equation} \label{def:alpha}
\alpha = \alpha(b,\lambda)  =  
\sum_{(j,k) \in \Theta_\kappa} (ib)^j \lambda^k \alpha_{j,k,\gamma \lambda }
\end{equation}
and 
\begin{equation} \label{def:Psi-K}
  \Psi_\kappa =   \Psi_\kappa(b,\lambda) 
=i  P_s - H_{\gamma \lambda}    P-  P +f(  P)+ \alpha\frac{y^2}{4}  P,
\end{equation}
then for any $\nu \in \N$ there exist $\rho > 0$ and $C > 0$ such that, for any $s\in J$, there holds $  \Psi_\kappa(s)\in C^\nu_{\exp}$ and 
\begin{equation}\label{remainder_est}
\norm{  \Psi_\kappa}_{C^\nu_{\exp,\rho}} \leq C 
\lambda\left(\left|\frac{\lambda_s}{\lambda} + b \right|+\left|b_s+b^2-\alpha(b,\lambda)\right| \right)
+ C(b^2+\lambda)^{\kappa}.
\end{equation}

\item Defining
\begin{equation}\label{eq:def_tildeP}
\tilde {  P}(b,\lambda,\theta)=\lambda^{-1/2}  P_{b,\lambda}e^{i(\theta-b\frac{y^2}{4})}, 
\end{equation}
we have, for any $s\in J$,
\begin{equation}\label{deriv_energy}
\Big|\frac{\dif}{\dif s}E(\tilde {  P})\Big| \lesssim 
\frac{1}{\lambda^2}
\left(\Big|\frac{\lambda_s}{\lambda}+b\Big|+|b_s+b^2-\alpha(b,\lambda)|+(b^2+\lambda)^\kappa\right).
\end{equation}
\item There exist $(\eps_{j,k})_{(j,k) \in \Theta_\kappa}\subset\R$ such that, for any $s\in J$,
\begin{equation}\label{energy_nls_dynsys}
\Big|E(\tilde 
{  P}(b,\lambda,\theta))-C_Q\mathcal{E}(b,\lambda)\Big|
\lesssim \frac{(b^2+\lambda)^\kappa}{\lambda^2},
\end{equation}
where
\begin{equation}\label{asympt_nrj}
C_Q=\frac18\|yQ\|_{L^2(\Gc)}^2, \qquad \mathcal{E}(b,\lambda)=\mathcal{E}_\mathrm{mo}(b,\lambda)
+\sum_{\substack{(j,k) \in \Theta_\kappa\\ j \text{ even}, \ j/2+k\ge2}}
\eps_{j,k} b^j\lambda^{k-2}
\end{equation}
and $\mathcal{E}_\mathrm{mo}$ is the Hamiltonian of the model dynamical system, 
defined in~\eqref{Hamiltonian_mo}.
\end{enumerate}
\end{proposition}

\subsection{Modulation parameters}
A choice of modulation parameters $\theta(s), b(s), \lambda(s)$ can be made so that 
the remainder $  h$ satisfies orthogonality conditions which are useful to construct our solution. 
This is ensured by the following proposition, which will be proved in Section~\ref{sec:modulation}. Notice that the function $\rho \in L^2(\Gc)$ which appears in the last condition will be defined in Lemma~\ref{nondeg.lem} below.

\begin{proposition}[Modulation parameters]
\label{prop:modulation}
Let $I$ be an interval of $\R$ and consider a solution
$u \in C^1(I,L^2(\Gc))$ of~\eqref{eq:nls-Hg}. 
There exists $0<\varepsilon<N/|\gamma|$ with the following property. If, for all $t\in I$, there exist
$\theta\in\R%
$ and $\lambda\in(0,\eps)$ such that
\begin{equation}
\label{eq:dist_u_modulation} 
    \norm*{  u(t,  x)-\frac{1}{\sqrt{\lambda}}e^{i\theta}  Q\left(\frac {  x}\lambda\right)}_{L^2(\Gc)}\leq \varepsilon,
\end{equation}
then there exist $\theta \in C^1(I,\R)$, $b \in C^1(I,\R)$ and 
$\lambda\in C^1(I,(0,N/|\gamma|))$ such that the function $h \in C^1(I,L^2(\Gc))$ defined by
\[
  u(t,  x)=\frac{1}{\sqrt{\lambda(t)}}e^{i\theta(t)
-i\frac{b(t)  x^2}{4\lambda(t)^2}}\left(  P_{b(t),\lambda(t)}\left(\frac {  x}{\lambda(t)}\right)
+  h\left(t,\frac {  x}{\lambda(t)}\right)\right)
\]
satisfies for all $t \in I$ the orthogonality conditions
\begin{equation*}
\big(  h(t),i\Lambda   
P_{b(t),\lambda(t)} \big)_{L^2(\Gc)}
= \big(  h(t),y^2   P_{b(t),\lambda(t)}\big)_{L^2(\Gc)}
=\big(  h(t),i  \rho\big)_{L^2(\Gc)}=0.
\end{equation*}
\end{proposition}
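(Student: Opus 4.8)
The plan is to recast the three orthogonality conditions as the vanishing of a single $\R^3$-valued map in the unknowns $(\theta,b,\lambda)$ and to solve it, for each $t$, by the implicit function theorem; the crux will be to check that the corresponding Jacobian is nondegenerate and stays boundedly invertible as $\lambda\to0^+$.

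First I would make the decomposition explicit. Inverting the defining relation (with $y=x/\lambda$) expresses the remainder as
$$
h(t,y)=\sqrt{\lambda}\,\mathrm{e}^{-i\theta+iby^2/4}\,u(t,\lambda y)-P_{b,\lambda}(y),
$$
so that $h$ becomes a function of $u(t)$ and of $(\theta,b,\lambda)$. I would then introduce
$$
F(t;\theta,b,\lambda)=\big(\,(h,i\Lambda P_{b,\lambda})_{L^2},\ (h,y^2P_{b,\lambda})_{L^2},\ (h,i\rho)_{L^2}\,\big),
$$
using the real inner product $(f,g)_{L^2}=\re\int_\Gc f\bar g$. The three weights $\Lambda P_{b,\lambda}$, $y^2P_{b,\lambda}$ and $\rho$ lie in $H^1(\Gc)$ because $Q$ and the profiles $P_{j,k}\in C^1_{\exp}$ decay exponentially, so $F$ is well defined and the orthogonality conditions read exactly $F(t;\theta,b,\lambda)=0$.

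By hypothesis \eqref{eq:dist_u_modulation}, for each $t$ there are reference values $(\bar\theta,\bar\lambda)$, $\bar\lambda<\delta$, with $u(t)$ within $\delta$ of $\bar\lambda^{-1/2}\mathrm{e}^{i\bar\theta}Q(\cdot/\bar\lambda)$; choosing $\bar b=0$ as base point gives $\norm{h}_{L^2}\lesssim\delta+\bar\lambda$, so $F(t;\bar\theta,0,\bar\lambda)$ is small. Differentiating $h$ in the parameters at the exact profile ($h=0$), to leading order in $b$ and $\lambda$ (so $P_{b,\lambda}\approx Q$), one finds
$$
\partial_\theta h=-iQ+O(b+\lambda),\quad \partial_b h=i\tfrac{y^2}{4}Q+O(b+\lambda),\quad \lambda\,\partial_\lambda h=\Lambda Q+O(b+\lambda).
$$
Pairing these against $(i\Lambda Q,\,y^2Q,\,i\rho)$ and using that $Q$ is real, together with the integration-by-parts identities $\int_\Gc Q\,\Lambda Q=0$, $\int_\Gc y^2Q\,\Lambda Q=-\tfrac14\int_\Gc y^2Q^2$ and $\int_\Gc y^2Q^2>0$, produces, in the variables $(\theta,b,\log\lambda)$, the leading Jacobian
$$
J\approx\begin{pmatrix} 0 & -\tfrac14\int_\Gc y^2Q^2 & 0\\[2pt] 0 & 0 & -\int_\Gc y^2Q^2\\[2pt] -\int_\Gc Q\rho & \tfrac14\int_\Gc y^2Q\rho & 0\end{pmatrix},
$$
whose determinant equals $-\tfrac14\big(\int_\Gc y^2Q^2\big)^2\int_\Gc Q\rho$, nonzero precisely because $\rho$ is chosen with $\int_\Gc Q\rho\neq0$.

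The main obstacle will be to upgrade this pointwise nondegeneracy to a uniform application of the implicit function theorem. Differentiating in $\log\lambda$ rather than $\lambda$ (as above) removes the degenerate $\lambda^{-1}$ scaling of the scaling column and makes the leading Jacobian $O(1)$ and boundedly invertible, uniformly for $b,\lambda$ small; the corrections coming from $P_{b,\lambda}-Q=O(\lambda)$, from $b\neq0$, and from $h\neq0$ (of size $O(\delta)$) are then absorbed by taking $\delta$ and $\eta$ small enough, so the inverse stays uniformly bounded. Finally, since $u\in C^0(I,H^1(\Gc))\cap C^1(I,H^{-1}(\Gc))$ and the weights belong to $H^1(\Gc)$, the map $t\mapsto F(t;\cdot)$ is $C^1$ in $t$ (its time derivative being read off through the $H^1$–$H^{-1}$ duality), so the implicit function theorem delivers $\theta,b,\lambda\in C^1(I)$ with $b\in(-\eta,\eta)$ and $\lambda\in(0,\eta)$, as required.
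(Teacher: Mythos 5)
Your proposal is correct in substance and rests on the same core mechanism as the paper: the implicit function theorem applied to the three orthogonality pairings, with precisely the Jacobian you display. The paper computes the same matrix (rows permuted), and its invertibility likewise reduces to $(y^2Q,\Lambda Q)_{L^2}\neq 0$ and $(Q,\rho)_{L^2}\neq 0$, the latter from Lemma~\ref{intid.lem}; your determinant $-\tfrac14\bigl(\int_\Gc y^2Q^2\diff y\bigr)^2\int_\Gc Q\rho\diff y$ is right. Where you genuinely diverge is in the two technical pillars. First, for the degeneracy $\lambda\to 0^+$, you reparametrize by $\log\lambda$ and invoke a uniform IFT; the paper instead conjugates by a reference $\Theta_{\theta_1,0,\lambda_1}$ so that the IFT is applied once near $\lambda=1$ (with an auxiliary parameter $\beta$ in $P_{b,\beta\lambda}$), and then verifies by explicit covariance identities that the resulting parameters do not depend on the chosen reference. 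That last step is what you still owe: your local branches, produced at different times $t$ and different base points $(\bar\theta,\bar\lambda)$, must be shown to agree and to continue across all of $I$ (local uniqueness plus a connectedness argument); the gluing computation in Proposition~\ref{prop:modulation-1} is exactly this, and it buys a single globally defined $C^1$ map on the whole $L^2$-tube $\mathcal Q_\delta$. Second, for the $C^1$ regularity in $t$, your route is genuinely different and in fact more economical: the paper, having built $\pi=(\theta,b,\lambda)$ as a $C^1$ function of $u\in L^2(\Gc)$, cannot simply compose with $u\in C^0(I,L^2(\Gc))$, so it approximates $u$ by $D(H_\gamma)$-valued solutions of class $C^1(I,L^2(\Gc))$ and passes to the limit in the integrated parameter ODE (Proposition~\ref{prop:modulation-2}); treating $t$ itself as the IFT parameter and reading $\partial_t F$ through the $H^{-1}$--$H^1$ duality, as you propose, bypasses that approximation entirely, since the weights $\Theta_\pi(i\Lambda P)$, $\Theta_\pi(y^2P)$, $\Theta_\pi(i\rho)$ lie in $H^1(\Gc)$.

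One step you should repair in the write-up: for $u(t)$ merely in $H^1(\Gc)$, the objects $\partial_b h$ and $\partial_\lambda h$ that you differentiate do not exist in $L^2(\Gc)$ --- formally they contain $y^2u$ and $y\,\partial_y u$ terms, and no decay of $u$ is assumed in the hypotheses. The derivatives that do exist are those of the scalar map $F$: by unitarity of $\Theta_\pi$ one writes $(h,w)_{L^2}=(u,\Theta_\pi w_{b,\lambda})_{L^2}-(P_{b,\lambda},w_{b,\lambda})_{L^2}$ and differentiates in $(\theta,b,\lambda)$ on the weight side, where the polynomial factors produced by the phase $e^{-ibx^2/4\lambda^2}$ and the scaling are harmless thanks to the exponential decay of $\Lambda P$, $y^2P$ and $\rho$. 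This rewriting is built into the paper's definition of $F(\pi;v,\beta)$; it yields exactly the Jacobian entries you computed at the exact profile, and it is also what makes your claimed $O(\delta)$ control of the Jacobian corrections for $h\neq 0$ rigorous.
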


\begin{remark}
To keep a light notation in this section, 
we use the same letters $b,\lambda,\theta, h$ to
denote the modulation parameters and rest as functions of $t$ or $s$. We will later
be more specific, see~\eqref{mod_param_st}.
\end{remark}

As we shall see in Section~\ref{sec:modulation_estimates}, 
the modulation parameters $b(s)$ and $\lambda(s)$ are governed, 
at first order as $s\to\infty$, by the model dynamical system 
\begin{equation}\label{dynsys_unp}
b_s+b^2-\alpha_\star\lambda=0, \quad
\frac{\lambda_s}{\lambda}+b=0,
\end{equation}
where $\alpha_\star $ is defined in~\eqref{def:alpha-01}.
The system~\eqref{dynsys_unp} is Hamiltonian, with conserved energy
\begin{equation}\label{Hamiltonian_mo}
\mathcal E_\mathrm{mo} (b,\lambda)
=\frac{b^2}{\lambda^2}-\frac{2\alpha_\star}{\lambda}.
\end{equation}
An exact solution with energy $\mathcal E_\mathrm{mo} = 0$ is given by
\begin{equation}
\label{def:lambda-mo}
b_\mathrm{mo}(s)=\frac2s, \quad \lambda_\mathrm{mo}(s)=\frac{2}{\alpha_\star s^2}.
\end{equation}

Proposition~\ref{prop:profile} shows that, at leading order, the energy of the
rescaled profile $\tilde {  P}$ is governed by the Hamiltonian energy~\eqref{Hamiltonian_mo}. 
However, the correction appearing
as a power expansion in~\eqref{asympt_nrj} does not vanish as $s\to\infty$. 
One has $\mathcal{E}(b,\lambda)=\mathcal{E}_\mathrm{mo}(b,\lambda)+e_0+o(1)$, where
$e_0$ is a constant (see Remark~\ref{rem:energy_corr}).
Nevertheless,
the relation between $E(\tilde {  P})$ and $\mathcal{E}_\mathrm{mo}$ suggests 
that, up to a shift of $e_0$ and a rescaling by $C_Q$,
the energy of our solution of~\eqref{eq:nls-Hg} should be controlled
by the model Hamiltonian energy $\mathcal{E}_\mathrm{mo}$.
More precisely, a natural approach to proving Theorem~\ref{th:blowup} would be: 
given $E^\star\in\R$, define $\mathcal{E}_\mathrm{mo}^\star$ by 
$E^\star=C_Q(\mathcal{E}_\mathrm{mo}^\star+e_0)$ and choose final data for the modulation parameters
$(b,\lambda)$ at a large time $s=s_1$ as $(\und b(s_1),\und \lambda(s_1))$, where
$(\und b,\und \lambda)$ is a solution of \eqref{dynsys_unp} with energy
$\mathcal E_\mathrm{mo} =\mathcal{E}_\mathrm{mo}^\star$.
Unfortunately, as can be seen by backward integration from $s=s_1$ using the modulation
estimates \eqref{mod_est}, the difference
$\mathcal{E}_\mathrm{mo}(\und b,\und \lambda)
-\mathcal{E}_\mathrm{mo}(b,\lambda)$
grows like $\log(s)$ as $s\to\infty$. As a consequence, the energy of our solution of~\eqref{eq:nls-Hg} 
cannot be controlled by $\mathcal{E}_\mathrm{mo}$ alone. Instead, the full expansion $\mathcal E(b,\lambda)$
must be used and the choice of final data for $(b,\lambda)$ is more involved.
The following proposition will be proved in Appendix~\ref{sec:dynamical_systems}.

For a fixed parameter $\lambda_0>0$ such that $\mathcal E^\star\lambda_0+2\alpha_\star>0$, let
\begin{equation}\label{defofF}
\mathcal F(\lambda)
=\int_\lambda^{\lambda_0}\frac{\diff\mu}{\mu^{3/2}\sqrt{\mathcal E^\star\mu+2\alpha_\star}},
\quad \lambda\in(0,\lambda_0].
\end{equation}

\begin{proposition}
\label{prop:dynamic}
Let $\mathcal E^\star\in\mathbb R$. There exist $s_0 \geq 1$ and $c_1>0$ such that,
for any $s_1\ge s_0$ we can find $b_1,\lambda_1>0$ satisfying
\begin{equation*}
\mathcal F(\lambda_1)=s_1, \quad
\mathcal E(b_1,\lambda_1)=\mathcal E^\star, \quad \Big|\frac{\lambda_1^{1/2}}{\lambda_\mathrm{mo}(s_1)^{1/2}}-1\Big|\le \frac{c_1}{s_1},
\quad
\Big|\frac{b_1}{b_\mathrm{mo}(s_1)}-1\Big|\le \frac{c_1}{s_1}.
\end{equation*}
\end{proposition}

\subsection{Proof of the main theorem}
We now define the final data which will give rise to an approximate solution of our problem
by backward in time integration of~\eqref{eq:nls-Hg}.
Let $E^\star\in\R$. Define
\begin{equation}\label{energies}
\mathcal E^\star=C_Q^{-1}E^\star.
\end{equation}
Consider $t_1<0$ and sufficiently close to $0$. Define 
the associated rescaled final time $s_1$ (see~\eqref{eq:def_of_s_1}) by 
\begin{equation}
\label{eq:def_of_s_1_first}
s_1=\Big(\frac{4}{3\alpha_\star^2} \Big)^{1/3}|t_1|^{-1/3}.
\end{equation}  
Let $(b_1,\lambda_1)$ be given by Proposition~\ref{prop:dynamic} and
$  u_1$ be the radial solution of~\eqref{eq:nls-Hg} such that 
\begin{equation}\label{eq:definition_of_u_1}
  u_1(t_1,  x)=\frac{1}{\sqrt{\lambda_1}}  P_{b_1,\lambda_1}
\Big(\frac{  x}{\lambda_1}\Big)e^{-i\frac{b_1  x^2}{4\lambda_1^2}}.
\end{equation}
Let $I\subset\R$ be the maximal interval such that $t_1\in I$, $  u_1$ exists on $I$ and satisfies~\eqref{eq:dist_u_modulation}. Observe that, for $t_1$ close enough to $0$, 
the parameters $b_1, \lambda_1$
given by Proposition~\ref{prop:dynamic} are small, so that $P_{b_1,\lambda_1}$ is close to $Q$.
By Proposition~\ref{prop:cauchy}, we have 
$u_1 \in C^0(I,D_{\gamma}\cap \Sigma^2(\Gc)) \cap C^1(I,L^2(\Gc))$. 
Let $\theta$, $b$, $\lambda$ and $  h$ be given by Proposition~\ref{prop:modulation}. 
Note that $h$ also belongs to 
$C^0(I,D_{\gamma}\cap \Sigma^2(\Gc)) \cap C^1(I,L^2(\Gc))$. The asymptotics as $t\to0^-$ of 
the functions $\theta$, $b$, $\lambda$ and $  h$ follow from the next proposition, which will be proved in Section~\ref{unifest_proof.sec}.

\begin{proposition}[Uniform estimates in the $t$ variable]\label{prop:uniform_in_t}
Let $\kappa\in \N^*$ (see \eqref{def_of_Sigma}).
There exists $t_0 \in (-\infty,0)$ such that, for any $t_1 \in (t_0,0)$, 
the solution $u_1$ of \eqref{eq:nls-Hg} with final data
\eqref{eq:definition_of_u_1} is defined and satisfies the hypotheses of 
Proposition~\ref{prop:modulation} on $[t_0,t_1]$. Furthermore, its decomposition
given by Proposition~\ref{prop:modulation} satisfies, for all $t\in[t_0,t_1]$,
  \begin{gather}
            \big|b(t)-C_b|t|^{\frac13}\big|\lesssim |t|,\quad 
            \big|\lambda(t)-C_\lambda|t|^{\frac23}\big|\lesssim |t|^{5/3}, %
            \label{t_est_bl}\\
            \norm{  h(t)}_{L^2(\mathcal G)} \lesssim \abs t^{\frac {\kappa-1}{3}}, \quad 
            \norm{  h_y(t)}_{L^2(\mathcal G)} \lesssim 
            \abs t^{\frac {\kappa-1}{3}}, \quad 
            \norm{  y   h(t)}_{L^2(\mathcal G)} \lesssim \abs t^{\frac{\kappa-2}{3}}, 
            \label{t_est_h}\\
           |E(\tilde P(b,\lambda,\theta)(t))-E^\star|\lesssim |t|^{\frac{\kappa-5}{3}}, \label{t_est_nrj}
    \end{gather}
    where $C_b=2\big(\frac{3\alpha_\star^2}{4}\big)^{1/3}$,
    $C_{\lambda}=\frac{2}{\alpha_\star}\big(\frac{3\alpha_\star^2}{4}\big)^{2/3}$.
    All these estimates are independent of $t_1$.
\end{proposition}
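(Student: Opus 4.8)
\emph{Overall strategy.} The plan is to prove all the estimates in the rescaled time $s$ given by \eqref{chvar} and to transfer them to $t$ only at the very end. Decomposing $w=P_{b,\lambda}+h$ as in Proposition~\ref{prop:modulation} and subtracting the profile equation \eqref{def:Psi-K} from \eqref{nls_w}, the remainder solves an equation of the schematic form $ih_s=\el h+\Psi_\kappa+(\text{modulation and nonlinear terms})$, where $\el$ is the linearisation around the profile and $\Psi_\kappa$ is controlled in $C^1_{\exp}$ by \eqref{remainder_est}. I would fix $\kappa\ge 7$, recall that the final data \eqref{eq:definition_of_u_1} give $h=0$ at $s=s_1=\mathcal F(\lambda_1)$, and run a bootstrap on $[s,s_1]$ under the a priori hypotheses that $(b,\lambda)$ stay within a fixed multiplicative factor of $(b_\mathrm{mo},\lambda_\mathrm{mo})$ and that $\norm{h}_{L^2}+\norm{h_y}_{L^2}\lesssim\lambda^{(\kappa-1)/2}$, $\norm{yh}_{L^2}\lesssim\lambda^{(\kappa-2)/2}$. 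Crucially, all bootstrap constants are taken independent of $t_1$, since this uniformity is exactly what later allows the passage to the limit $t_1\to 0^-$.

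\emph{Modulation system.} Differentiating in $s$ the three orthogonality conditions of Proposition~\ref{prop:modulation} and using the equation for $h$, I would extract coupled ODEs for $b_s$, $\lambda_s/\lambda$ and $\theta_s$ whose leading part is the model system \eqref{dynsys_unp}, the remainders being quadratic in the small quantities $\norm{h}$ and $\norm{\Psi_\kappa}$. To integrate the system I would exploit its Hamiltonian structure: the model flow conserves $\mathcal E_\mathrm{mo}$ from \eqref{Hamiltonian_mo}, and Proposition~\ref{prop:dynamic} supplies final data $(b_1,\lambda_1)$ lying on the trajectory of energy $\mathcal E^\star$ with $\mathcal F(\lambda_1)=s_1$. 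Comparing the true parameters with this explicit trajectory and propagating the controlled defects yields $b(s)=b_\mathrm{mo}(s)(1+o(1))$ and $\lambda(s)=\lambda_\mathrm{mo}(s)(1+o(1))$, together with quantitative rates on $\lambda_s/\lambda+b$ and $b_s+b^2-\alpha$.

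\emph{Remainder control (the crux).} The heart of the proof is the uniform bound on $h$. Energy conservation is of no use at minimal mass, since the problem is scaling invariant there and the linearised energy is only degenerately coercive; the decay of $h$ must instead be extracted from the virial (weighted) direction. I would therefore construct a mixed energy--virial Lyapunov functional $\mathcal I(h)=\tfrac12\dual{\el h}{h}+(\text{localised virial correction})$, prove that under the three orthogonality conditions it is coercive, $\mathcal I(h)\gtrsim\norm{h}_{L^2}^2+\norm{h_y}_{L^2}^2+\norm{yh}_{L^2}^2$, using the spectral analysis of the linearised operators from Section~\ref{sec:preliminaries}, and then derive an almost-monotonicity formula for $\mathcal I$ with a source driven by $\norm{\Psi_\kappa}_{C^1_{\exp}}=O\big((b^2+\lambda)^\kappa\big)$. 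Integrating this inequality from $s_1$ (where $h$ vanishes) and feeding in the modulation estimates closes the bootstrap strictly, giving the claimed powers of $\lambda$. \textbf{This coercivity step is the main obstacle}: one works exactly at the mass threshold where the quadratic form carries a zero mode, so the orthogonality directions must be shown to remove precisely the kernel and the neutral/negative directions, and on the graph the vertex term $\gamma\delta$ in $H_\gamma$ has to be accommodated in the spectral picture, which---thanks to the radial reduction to a half-line with a Robin condition at $0$---parallels the line analysis of \cite{LeMaRa16,TaXu21}.

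\emph{Passage to $t$ and energy estimate.} Finally I would transfer the $s$-estimates to $t$. From \eqref{chvar} and $\lambda\sim\lambda_\mathrm{mo}=2/(\beta s^2)$ one has $\dif s/\dif t=\lambda^{-2}\sim(\beta s^2/2)^2$; integrating $\dif s/s^4\sim(\beta^2/4)\,\dif t$ up to $0^-$ gives $s\sim\big(\tfrac{4}{3\beta^2}\big)^{1/3}|t|^{-1/3}$, whence $b(t)\sim 2/s\sim C_b|t|^{1/3}$ and $\lambda(t)\sim 2/(\beta s^2)\sim C_\lambda|t|^{2/3}$ with the stated constants, and substituting $s\sim|t|^{-1/3}$ into the $h$-bounds produces \eqref{t_est_h}. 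The energy estimate \eqref{t_est_nrj} follows by integrating \eqref{deriv_energy} in $s$: its right-hand side is $\lambda^{-2}$ times the modulation defects and $(b^2+\lambda)^\kappa$, all now controlled, while the endpoint value is fixed by \eqref{energy_nls_dynsys} and the choice $\mathcal E(b_1,\lambda_1)=\mathcal E^\star$ to be $E(\wt P)(t_1)=C_Q\mathcal E^\star+o(1)=E^\star+o(1)$; combining the boundary term with the integral and changing variables gives the bound $|t|^{(\kappa-5)/3}$. The hypothesis $\kappa\ge 7$ is used throughout to guarantee that all these powers are negative in $s$ (positive in $|t|$), so that the errors genuinely vanish as $t\to 0^-$.
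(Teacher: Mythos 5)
There is a genuine gap, and it sits exactly where you locate the dynamics of the parameters. Your plan is to integrate the modulation ODEs by ``exploiting the Hamiltonian structure'' of the model system \eqref{dynsys_unp}, i.e.\ conservation of $\mathcal E_\mathrm{mo}$ from \eqref{Hamiltonian_mo}, and to compare $(b,\lambda)$ with the explicit trajectory. This is precisely the route the paper rules out in Section~\ref{sec:outline}. Along the true modulation flow one has $b_s+b^2-\alpha=O(s^{-\kappa})$ with $\alpha=\beta\lambda+O(b^2\lambda+\lambda^2)$, so $\frac{\dif}{\dif s}\mathcal E_\mathrm{mo}(b,\lambda)\approx 2b\lambda^{-2}(\alpha-\beta\lambda)=O(s^{-1})$: integrating from $s$ to $s_1$ leaves an error $O(\log(s_1/s))$, which diverges as $t_1\to0^-$ (i.e.\ $s_1\to\infty$). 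Since the entire point of the proposition is that the estimates are \emph{independent of $t_1$} (this uniformity is what the compactness argument in the proof of Theorem~\ref{th:blowup} consumes), a comparison based on $\mathcal E_\mathrm{mo}$ cannot close. The paper instead integrates the profile-energy estimate \eqref{deriv_energy} together with the modulation bounds \eqref{mod_est} to get $|E(\wt P)-E^\star|\lesssim s^{5-\kappa}$, converts this via \eqref{energy_nls_dynsys} into $|b^2-2\beta\lambda-\mathcal E^\star\lambda^2|\lesssim s^{-4}$ using the \emph{full} expansion $\mathcal E$ of \eqref{asympt_nrj} --- whose non-vanishing correction $\eps_{0,1}+2\beta\eps_{2,0}$ is exactly what $\mathcal E_\mathrm{mo}$ misses --- and then shows $\big|\frac{\dif}{\dif s}\mathcal F(\lambda(s))-1\big|\lesssim s^{-2}$, hence $\mathcal F(\lambda(s))=s+O(s^{-1})$ with $\mathcal F$ as in \eqref{defofF}, which pins down $\lambda$ and then $b$. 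You do invoke Proposition~\ref{prop:dynamic} and $\mathcal F$, but only to set the final data; the dynamical mechanism you describe for $s<s_1$ would not deliver the stated uniform rates.

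A second, smaller but real, gap: your claim that the energy--virial functional is coercive ``under the three orthogonality conditions'' is false as stated. Those conditions control (approximately) $(h_1,y^2Q)$, $(h_2,\Lambda Q)$ and $(h_2,\rho)$, but not $(h_1,Q)$ --- and $Q$ is a \emph{negative} direction of $L_+$, since $L_+Q=-4Q^5$ gives $\la L_+Q,Q\ra<0$. The paper supplies the missing direction through mass conservation: Lemma~\ref{mod_est.lem} bootstraps $|(h,P_{b,\lambda})|\lesssim s^{-(\kappa+1)}$ from $\norm{u(t)}_{L^2}=\norm{u(t_1)}_{L^2}$ and the near-constancy of $\norm{P_{b,\lambda}}_{L^2}$, yielding \eqref{almost_orth}, which is then fed into the coercivity bound \eqref{coercivityH}. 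This step is indispensable at minimal mass and is absent from your sketch. Apart from these two points your architecture is close to the paper's: the paper also works with a functional in the virial space, namely $H$ of \eqref{functional} with the weight $\lambda\norm{yh}_{L^2}^2$ (no localized virial correction is needed, which is one of the improvements over \cite{LeMaRa16} that the paper highlights), obtains monotonicity only after rescaling to $S=H/\lambda^m$ in \eqref{Sfunctional} with $m$ large (the sign of $\dif H/\dif s$ alone is not enough), integrates backwards from $s_1$ where $h=0$, and your passage to the $t$ variable and the treatment of \eqref{t_est_nrj} coincide with the paper's proof.
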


The phase modulation parameter $\theta$ can be shown to satisfy $\theta(t)\sim C_\theta|t|^{-\frac13}$ for some $C_\theta>0$, but it plays no specific role in the proof of Theorem~\ref{th:blowup}. 

\begin{remark}\label{rem:energy_corr}
In Section~\ref{unifest_proof.sec}, Proposition~\ref{prop:uniform_in_t} will
be deduced from its counterpart in the $s$ variable, Proposition~\ref{prop:uniform_in_s},
which says in particular that 
$b(s)\sim b_\mathrm{mo}(s)$ and $\lambda(s)\sim \lambda_\mathrm{mo}(s)$ as $s\to\infty$.
Therefore, the terms corresponding to $(j,k)=(0,2)$ and $(j,k)=(2,1)$ in the
expansion \eqref{asympt_nrj} behave asymptotically as
\begin{equation*}
\eps_{0,2}+\eps_{2,1}\frac{b^2}{\lambda}\sim 
\eps_{0,2}+\eps_{2,1}\frac{b_\mathrm{mo}^2}{\lambda_\mathrm{mo}}
\sim \eps_{0,2}+2\alpha_\star\eps_{2,1}, \quad s\to\infty.
\end{equation*}
Hence,
\begin{equation*}\label{asympt_nrj2}
\mathcal{E}(b,\lambda)\sim\mathcal{E}_\mathrm{mo}(b,\lambda)
+\eps_{0,2}+2\alpha_\star\eps_{2,1}, \quad s\to\infty.
\end{equation*}
Thanks to Remark~\ref{rem:improved_estimates}, the first two estimates 
in Proposition~\ref{prop:uniform_in_t} can be improved to
\begin{equation*}
\big|b(t)-C_b|t|^{\frac13}\big|\lesssim |t|^{5/3},
\quad 
\big|\lambda(t)-C_\lambda|t|^{\frac23}\big|\lesssim |t|^{7/3}
\end{equation*}
by replacing the energy $\mathcal E^\star$ in~\eqref{defofF} with
$\underline{\mathcal E}^\star=\mathcal E^\star-(\eps_{0,2}+2\alpha_\star\eps_{2,1})$.
\end{remark}

We are now in position to prove our main result.

\begin{proof}[Proof of Theorem~\ref{th:blowup}, assuming 
Propositions~\ref{prop:profile},~\ref{prop:modulation}, 
\ref{prop:dynamic} and~\ref{prop:uniform_in_t}] 

For a given $E^\star\in \mathbb R$, let $\mathcal E^\star$ be defined by \eqref{energies}.
Choose an increasing sequence of times $(t_n)\subset(t_0,0)$ 
such that $t_n\to 0$ as $n\to\infty$ and corresponding rescaled times $s_n\to \infty$ defined through~\eqref{eq:def_of_s_1_first}. For each $n\in\N^*$, 
let $b_n$ and $\lambda_n$ as given by Proposition~\ref{prop:dynamic}
and $  u_n(t_n)$ defined by~\eqref{eq:definition_of_u_1}, with the change of notation $t_1\to t_n$, $s_1\to s_n$, $b_1\to b_n, \ \lambda_1 \to \lambda_n$.
For each $n\in\N^*$, the corresponding solution $  u_n$ of~\eqref{eq:nls-Hg}
satisfies Proposition~\ref{prop:uniform_in_t} on $[t_0,t_n]$, with all the estimates independent
of $n$. We will show that $(  u_n)$ converges to a solution $  u$ of 
\eqref{eq:nls-Hg} with the desired properties.

Let $\chi \in C^\infty([0,\infty),[0,1])$ be equal to 0 on $[0,1]$ and equal to 1 on $[2,\infty)$. 
For $R > 0$ we define the radial function $  \chi_R$ on $\mathcal G$ by $\chi(x/R)$ on each edge.  
Let $\epsilon > 0$. From the formula~\eqref{eq:definition_of_u_1} defining $u_n(t_n)$ we deduce that
there exists $R > 0$ such that 
\begin{equation*} \label{eq:un-delta}
\int_{\mathcal G} \abs{  u_n(t_n)}^2   \chi_R \diff   x \leq \epsilon. 
\end{equation*}
Since $u$ verifies~\eqref{eq:nls-Hg}, we have
\[
\frac \dif {\dif t} \int_{\mathcal G} \abs{  u_n}^2   \chi_R \diff   x 
= 2 \im \int_{\mathcal G} \partial_x   u_n \bar {  u}_n \partial_x   \chi_R \diff   x.
\]
Using the decomposition of the solution given by Proposition~\ref{prop:modulation},
the estimates of Proposition~\ref{prop:uniform_in_t} for the corresponding variables
$b_n,\lambda_n,  h_n$ and the exponential decay of $  P_{b_n,\lambda_n}$, 
direct calculations show that
\begin{align*}
\left | \frac \dif {\dif t} \int_{\mathcal G} \abs{  u_n}^2   \chi_R \diff x \right| 
& \lesssim \frac 1 {R \lambda_n(t)} \big( e^{-\frac {R}{2 \lambda_n(t)}} 
+\|yh_n(t)\|_{L^2(|y|\ge R/\lambda_n(t))}^2 +\|h_n(t)\|_{H^1(|y|\ge R/\lambda_n(t))}^2 \big)
\lesssim \frac {|t|^{\frac 23 (\kappa-3)}}R.
\end{align*}
Thus, integrating over $[t_0,t_n]$, we find a constant $C>0$ such that 
(choosing $R$ larger if necessary)
\[
\int_{\mathcal G} \abs{  u_n(t_0)}^2   \chi_R \diff   x \le
C\frac {|t_0|^{\frac 23 (\kappa-3)+1}}R
+ \int_{\mathcal G} \abs{  u_n(t_n)}^2   \chi_R \diff   x \leq 2 \epsilon.
\]
After extracting a subsequence if necessary, we obtain that 
the sequence $(  u_n(t_0))$ has a limit $  u_0$ in $L^2(\mathcal G)$ (since the sequence  is bounded  $H^1(\mathcal G)$). 
We denote by $  u$ the maximal solution of~\eqref{eq:nls-Hg} with initial condition $  u(t_0) =   u_0$. 

Let $\tau \in (t_0,0)$ and assume by contradiction that $  u$ is not defined on $[t_0,\tau]$. 
Let $n \in \N^*$ such that $t_n > \tau$. 
Let $C > 0$ be such that $\norm{  u_n(t)}_{H^1(\mathcal G)} \leq C$ for all $n \in \N^*$ 
and $t \in [t_0,\tau]$. 
By the blow-up alternative, 
there exists $\tau_1$ in $[t_0,\tau]$ such that 
$  u$ is defined on $[t_0,\tau_1]$ and $\norm{  u(\tau_1)}_{H^1(\mathcal G)} \geq 2 C$. 
For all $t \in [t_0,\tau_1]$ the sequence $(  u_n(t))$ goes to $  u(t)$ in $L^2(\Gc)$ (see Lemma~\ref{lem:conv-L2-solution}) and 
has a weak limit in $H^1(\Gc)$.
Thus, $(  u_n(t))$ goes weakly to $  u(t)$ in $H^1(\Gc)$. 
In particular, $\norm{  u(t)}_{H^1(\Gc)} \leq C$ for all $t \in [t_0,\tau_1]$. 
This gives a contradiction and proves that $  u$ is defined on $[t_0,\tau]$. 
Since $\tau$ is arbitrary, this implies that $  u$ is well defined on $[t_0,0)$.

Since $u_n(t)\to u(t)$ in $L^2(\mathcal G)$ for all $t\in[t_0,0)$, 
Proposition~\ref{prop:modulation} can be applied to $u$,
yielding modulation parameters and remainder $b_\infty(t), \lambda_\infty(t), \theta_\infty(t),   h_\infty$, defined on $[t_0,0)$.
Then, by standard arguments, for all $t\in[t_0,0)$, there holds
\[
\theta_n(t) \to \theta_\infty(t), \quad b_n(t) \to b_\infty(t), \quad 
\lambda_n(t) \to \lambda_\infty(t),
\]
and, weakly in $\Sigma^1(\Gc)$, 
\[
  h_n(t) \rightharpoonup   h_\infty(t), \quad  n \to \infty.
\]
By Proposition~\ref{prop:uniform_in_t}, we deduce that, as $t\to0^-$,
\[
b_\infty(t) \sim C_b |t|^{\frac 13}, \quad 
\lambda_\infty(t) \sim C_\lambda |t|^{\frac 23},
\]
\[
\norm{  h_\infty(t)}_{L^2(\mathcal G)} \lesssim \abs t^{\frac {\kappa-1}{3}}, \quad 
\norm{\partial_y  h_\infty(t)}_{L^2(\mathcal G)} \lesssim\abs t^{\frac {\kappa-1}{3}}, \quad 
\norm{  y   h_\infty(t)}_{L^2(\mathcal G)} \lesssim \abs t^{\frac{\kappa-2}{3}}.
\]
Using $  y=  x/\lambda_\infty$, the decomposition of $  u$ 
given by Proposition~\ref{prop:modulation} and the formula for $  P_{b_\infty,\lambda_\infty}$
in Proposition~\ref{prop:profile}, it follows by direct calculations that
\begin{align*}
\|  u(t)\|_{L^2(\mathcal G)}^2&=\int_\mathcal G |  P_{b_\infty,\lambda_\infty}(  y)+  h_\infty(t,  y)|^2\diff   y
\longrightarrow \|  Q\|_{L^2(\mathcal G)}^2, \quad t\to0^-,  \\
\|  u_x(t)\|_{L^2(\mathcal G)}^2
&=\lambda_\infty(t)^{-2}
\int_\mathcal G \left|-ib\frac{  y}{2}\big(  P_{b_\infty,\lambda_\infty}(  y)+  h_\infty(t,  y)\big)
+\partial_y\big(  P_{b_\infty,\lambda_\infty}(  y)+  h_\infty(t,  y)\big)\right|^2\dif   y. 
\end{align*}
As $t\to0^-$, this implies 
\begin{equation}
    \label{eq:constant_explicit}
  \|  u_x(t)\|_{L^2(\mathcal G)}^2\sim \lambda_\infty(t)^{-2}\|Q_y\|_{L^2(\mathcal G)}^2
\sim C_\lambda^{-2}\|Q_y\|_{L^2(\mathcal G)}^2 |t|^{-\frac 43}.
\end{equation}
This proves~\eqref{blowup_speed} and, since $\|  u\|_{L^2(\mathcal G)}^2$ is constant,
that $M(  u)=M(  Q)$.

To complete the proof, we now show that $E(  u)=E^\star$. 
By~\eqref{energy_nls_dynsys} and~\eqref{t_est_nrj}, there exists a function 
$\eps : [t_0,0) \to \R_+^*$ with $\lim_{t\to0^-}\eps(t)=0$ and such that, for all $n \in \N^*$,
\[
\big|\mathcal E \big( b_n(t),  \lambda_n(t) \big) - C_Q^{-1}E^\star\big| \le \eps (t),
\quad t\in[t_0,0).
\]
Letting $n \to \infty$ yields
\[
\big|\mathcal E \big( b_\infty(t),  \lambda_\infty(t) \big) 
- C_Q^{-1}E^\star\big| \le \eps (t),
\quad t\in[t_0,0).
\]
Hence, using again~\eqref{energy_nls_dynsys}, we conclude that
\[
E \big(   P_{ b_\infty(t),  \lambda_\infty (t) } \big) \longrightarrow E^\star,  \quad t\to0^-.
\]
It then follows from the above information about $b_\infty,\lambda_\infty$ and $h_\infty$ that
\[
E(  u(t)) \longrightarrow E^\star,  \quad t\to0^-.
\]
By conservation of the energy, we deduce that $E(  u(t)) = E^\star$ for all $t \in [t_0,0)$.
\end{proof}

\section{Cauchy problem}\label{sec:cauchy}

\subsection{Local well-posedness}

Since the operator $H_\gamma$ is selfadjoint, it generates a strongly continuous group  $e^{-itH_\gamma}$.
As we are working in a one-dimensional setting, the nonlinearity $|u|^4u$ is Lipschitz continuous from bounded sets of $H^1_D(\mathcal G)$ to $L^q(\mathcal G)$, $2\leq q\leq \infty$, 
and well-posedness of the Cauchy problem for~\eqref{eq:nls} in the energy space $H^1_D(\mathcal G)$ may be obtained 
following a classical line of arguments (see e.g.~\cite{AnCa19}). 
For any $t_0 \in \R$ and any initial data $u_0\in H^1_D(\mathcal G)$, there exists a unique maximal solution
\[
  u\in C\left((T_{\min},T_{\max}),H^1_D(\mathcal G)\right) \cap  C^1\left((T_{\min},T_{\max}),H^1_D(\mathcal G)^\star\right)
\]
such that $u(t_0)=u_0$. The energy $E$ and the mass $M$, defined in~\eqref{eq:def_E} and~\eqref{eq:def_M}, are preserved along the time evolution, i.e.~for any $t\in (T_{\min},T_{\max})$, we have
\[
E(u(t))=E(u_0),\quad M(u(t))=M(u_0).
\]
The blow-up alternative holds, i.e.~either $T_{\max}=+\infty$ (resp. $T_{\min}=-\infty$) or
\[
\lim_{t\to T_{\max}\text{ (resp. }T_{\min}\text{)}}\norm{u(t)}_{H^1(\mathcal G)}=\infty.
\]
There is continuous dependence with respect to the initial data, i.e.~for any $(u_{0,n})\subset H^1(\mathcal G)$ such that $u_{0,n}\to u_0$ in $H^1(\mathcal G)$ the associated solutions $(u_n)$ of~\eqref{eq:nls} are defined on $[T_*,T^*]$ and verify $u_n\to u$ in $C((T_*,T^*),H^1(\mathcal G))$ for any $T_*,T^*$ such that $T_{\min} < T_* <T^*<T_{\max}$.

If in addition $u_0\in D(H_\gamma)$, then $u$ verifies
\[
  u\in C\left((T_{\min},T_{\max}),D(H_\gamma)\right) \cap C^1\left((T_{\min},T_{\max}),L^2(\mathcal G)\right).
\]
Finally, if the initial data $u_0$ belongs to the virial space $\Sigma^1(\mathcal G)$, 
then the solution $u$ verifies (see~\cite{GoOh20})
\[
u\in C\left((T_{\min},T_{\max}),\Sigma^1(\mathcal G)\right)
\]
and the virial identity is satisfied:
\[
\frac{\dif^2}{\dif t^2}\norm{x  u(t)}_{L^2(\mathcal G)}^2=8E(  u(t))+4\gamma |  u(t,  0)|^2.
\]

\subsection{Global existence}
\label{sec:global}

We now prove some global existence results for the nonlinear Schr\"odinger equation
\eqref{eq:nls}. As in the classical $\mathbb R^d$ case, they are obtained using Gagliardo-Nirenberg type inequalities.

\begin{lemma}
[Gagliardo-Nirenberg inequalities on star graphs]
The following inequalities hold:
\begin{align}
    \|  u\|_{L^6(\mathcal G)}^6&\le \frac{3\max\left\{1,\frac{4}{N^2} \right\}}{\|\mathsf Q\|_{L^2(\mathbb R)}^4}\|  u_x\|_{L^2(\mathcal G)}^2\|  u\|_{L^2(\mathcal G)}^4,\quad   u\in H^1_D(\mathcal G),\label{GN}\\
    \|  u\|_{L^6(\mathcal G)}^6&\le \frac{12}{N^2\|\mathsf Q\|_{L^2(\mathbb R)}^4}\|  u_x\|_{L^2(\mathcal G)}^2\|  u\|_{L^2(\mathcal G)}^4,\quad   u\in H^1_{\mathrm{rad}}(\mathcal G).\label{GN_radial}
\end{align}
\end{lemma}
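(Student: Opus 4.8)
The plan is to prove both Gagliardo--Nirenberg inequalities by reducing them to the sharp one-dimensional inequality on the whole line $\R$, exploiting the sharp constant encoded in the ground state $Q$. The starting point is the classical sharp Gagliardo--Nirenberg inequality on $\R$ associated with the mass-critical problem, namely
\[
\|f\|_{L^6(\R)}^6 \le \frac{3}{\|Q\|_{L^2(\R)}^4}\|f'\|_{L^2(\R)}^2\|f\|_{L^2(\R)}^4,
\qquad f\in H^1(\R),
\]
with the optimal constant attained by $Q$ (this is the sharp form of the embedding, and the constant $3$ is exactly the one coming from the identity $-Q''+Q-Q^5=0$ and the Pohozaev/virial relations for $Q$). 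I would either cite this as well known or recall its proof via scaling and the Euler--Lagrange characterization of $Q$.

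For \eqref{GN}, the idea is to control each edge by an even reflection onto the line. Given $u\in H^1_D(\mathcal G)$, its components $u_j$ need not vanish at $0$, so I cannot simply reflect them to get an $H^1(\R)$ function. Instead I would use the following device: for each $j$, extend $u_j$ to a function $\tilde u_j$ on $\R$ by even reflection across $0$; the reflected function lies in $H^1(\R)$ precisely because continuity (which holds at the vertex, but more importantly evenness guarantees no jump) ensures no Dirac mass appears in the distributional derivative, and $\|\tilde u_j'\|_{L^2(\R)}^2=2\|u_j'\|_{L^2(\R_+)}^2$, $\|\tilde u_j\|_{L^p(\R)}^p=2\|u_j\|_{L^p(\R_+)}^p$. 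Applying the sharp line inequality to $\tilde u_j$ gives
\[
2\|u_j\|_{L^6(\R_+)}^6 \le \frac{3}{\|Q\|_{L^2(\R)}^4}\,2\|u_j'\|_{L^2(\R_+)}^2\cdot\big(2\|u_j\|_{L^2(\R_+)}^2\big)^2,
\]
that is $\|u_j\|_{L^6(\R_+)}^6 \le \frac{6}{\|Q\|_{L^2(\R)}^4}\|u_j'\|_{L^2(\R_+)}^2\|u_j\|_{L^2(\R_+)}^4$. Summing over $j$ and using $\sum_j a_j b_j^2 \le (\sum_j a_j)(\sum_j b_j^2)^{\!}$ type bookkeeping (here $\sum_j \|u_j'\|^2\|u_j\|^4 \le \|u_x\|_{L^2(\mathcal G)}^2\|u\|_{L^2(\mathcal G)}^4$, since each factor is dominated by the corresponding full sum) yields \eqref{GN}, though I should double-check whether the reflection constant gives $3$ rather than $6$; the cleaner route is to note that for the \emph{half-line} the sharp constant is doubled, and the summation over edges then recombines to the stated $3/\|Q\|^4$ because the half-copies of $Q$ on $\mathcal G$ themselves realize equality.

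For the radial inequality \eqref{GN_radial}, when $u\in\Hrad$ all components coincide, $u_j=u$, so $\|u\|_{L^p(\mathcal G)}^p = N\|u\|_{L^p(\R_+)}^p$ and $\|u_x\|_{L^2(\mathcal G)}^2=N\|u'\|_{L^2(\R_+)}^2$. Substituting the half-line inequality $\|u\|_{L^6(\R_+)}^6\le \frac{6}{\|Q\|^4}\|u'\|_{L^2(\R_+)}^2\|u\|_{L^2(\R_+)}^4$ and re-expressing everything through the graph norms gives a factor that I expect to collapse to $12/(N^2\|Q\|^4)$: indeed $\|u\|_{L^6(\mathcal G)}^6=N\|u\|_{L^6(\R_+)}^6 \le N\cdot\frac{6}{\|Q\|^4}\cdot\frac1N\|u_x\|_{L^2(\mathcal G)}^2\cdot\frac1{N^2}\|u\|_{L^2(\mathcal G)}^4 = \frac{6}{N^2\|Q\|^4}\|u_x\|^2\|u\|^4$, and tracking the reflection factor carefully should produce the stated $12$. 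I would verify the numerical constants by testing the inequality on the extremizer $u=Q$ (the $N$ half-copies), which must saturate \eqref{GN_radial} and thereby fixes the constant unambiguously. The main obstacle is precisely this constant-tracking: getting the factors of $2$ from reflection and the factors of $N$ from radial summation to combine correctly, which is best pinned down by demanding equality at the explicit extremizer rather than by chasing inequalities.
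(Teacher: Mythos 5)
Your treatment of \eqref{GN_radial} is, modulo an arithmetic slip, exactly the paper's proof, but the slip conceals a genuine gap in your treatment of \eqref{GN}. First the arithmetic: even reflection doubles $\|u_j\|_{L^6(\R_+)}^6$ and $\|u_j'\|_{L^2(\R_+)}^2$ but \emph{quadruples} $\|u_j\|_{L^2(\R_+)}^4$, so the half-line inequality reads
\[
\|u_j\|_{L^6(\R_+)}^6\le \frac{12}{\|Q\|_{L^2(\R)}^4}\,\|u_j'\|_{L^2(\R_+)}^2\|u_j\|_{L^2(\R_+)}^4,
\]
with $12 = 3\cdot2\cdot4/2$, not $6$. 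With this corrected, your radial bookkeeping gives $N\cdot 12\cdot N^{-1}\cdot N^{-2}=12/N^2$ directly, which is precisely how the paper proves \eqref{GN_radial}, and your proposed sanity check at the $N$ half-copies of $Q$ is indeed a valid confirmation that $12/N^2$ is sharp in the radial class.

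The gap is in \eqref{GN}. Summing the (corrected) half-line inequalities over the edges can only produce the constant $12/\|Q\|_{L^2(\R)}^4$, a factor $4$ worse than the stated $3/\|Q\|_{L^2(\R)}^4$, and no recombination recovers it. Your fallback --- pinning the constant by demanding equality at the $N$ half-copies of $Q$ --- fails because that function does \emph{not} saturate \eqref{GN} unless $N=2$: substituting it gives $\mathrm{LHS}/\mathrm{RHS}=4/N^2$. It is the extremizer of \eqref{GN_radial}, not of \eqref{GN}; for $N\ge3$ the constant $3$ is not attained at all, being only approached by a soliton escaping to infinity along one edge. The telltale sign is that your argument never uses continuity at the vertex (even reflection of any $H^1(\R_+)$ function lies in $H^1(\R)$, vertex condition or not), whereas for general $u\in H^1(\mathcal G)$ without the vertex condition the optimal constant \emph{is} $12/\|Q\|_{L^2(\R)}^4$, attained by a decreasing half-soliton on a single edge --- so any edge-by-edge scheme is structurally stuck at $12$. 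The line constant $3$ on $H^1_D(\mathcal G)$, $N\ge2$, needs a global mechanism, e.g.\ symmetric decreasing rearrangement of $\abs{u}$ onto $\R$: since $u$ is continuous at the vertex and vanishes at infinity, almost every superlevel set has at least two boundary points, so the P\'olya--Szeg\H{o} inequality applies and reduces \eqref{GN} to the sharp inequality on the line. The paper does not reprove this step; it quotes it from the literature (Adami--Cacciapuoti--Finco--Noja) and then deduces the half-line bound with constant $12$ \emph{from} \eqref{GN} by reflection --- that is, the logical order is the reverse of yours: \eqref{GN} is the input of the reflection argument, not its output.
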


\begin{proof}
The inequality~\eqref{GN} is well-known to hold on the line $\mathbb R$, i.e.~in case $N=2$, and the extension to a general star graph is carried out as follows. 
The case $N=1$ can be deduced from the case $N=2$ by 
extending a function defined on $\R_+$ evenly to $\R$. This yields, on the half-line $\mathbb R_+$, 
\[
   \|u\|_{L^6(\R_+)}^6\le \frac{12}{\|\mathsf Q\|_{L^2(\mathbb R)}^4}\|u_x\|_{L^2(\R_+)}^2\|u\|_{L^2(\R_+)}^4,\quad u\in H^1(\R_+).
\]
Let $  u\in H^1_{\mathrm{rad}}(\mathcal G)$ and denote by $u:\mathbb R_+\to \mathbb R$ the function representing $  u$ on any branch of the graph. We have
\begin{equation*}
     \|  u\|_{L^6(\mathcal G)}^6=N \|u\|_{L^6(\R_+)}^6
 \le N\frac{12}{\|\mathsf Q\|_{L^2(\mathbb R)}^4}\|u_x\|_{L^2(\R_+)}^2\|u\|_{L^2(\R_+)}^4
 =\frac{12}{N^2\|\mathsf Q\|_{L^2(\mathbb R)}^4}\|  u_x\|_{L^2(\mathcal G)}^2\|  u\|_{L^2(\mathcal G)}^4,
\end{equation*}
which establishes~\eqref{GN_radial}. Consider now the $H^1_D(\mathcal G)$ setting and assume $N\geq 2$. Since functions in $H^1_D(\mathcal G)$ are continuous at the vertex, and the star graph with $N\geq 2$ contains at least two half-lines (which can be thought of as a full line), the constant in the Gagliardo-Nirenberg inequality in  $H^1_D(\mathcal G)$ can be at best the same as the one on $H^1(\mathbb R)$ (this can be seen rigorously from a symmetric rearrangement on the graph, see~\cite[Theorem 3.2]{AdSeTi17a}). Consider the sequence of functions $(  u^n)\subset H^1_D(\mathcal G)$ defined by 
\[
u^n_1(x_1)=\mathsf Q(x_1 - n),\quad u^n_j(x_j)=\mathsf Q(x_j+n), \quad j\neq 1.
\]
The sequence is built with a bump on one half-line and tails on the others, the bump going away from the vertex. As $n\to\infty$, the sequence optimizes the Gagliardo-Nirenberg inequality in  $H^1_D(\mathcal G)$ with a constant which is the same as the one on $H^1(\mathbb R)$ (it also illustrates the fact that an optimizer does not exist). Hence~\eqref{GN}.
\end{proof}

\begin{proposition}[Global well-posedness]
\label{prop:global-wellposedness}
Let $\gamma\in\R$, $  u_0$ be an initial data, $t_0$ an initial time and 
$  u$ be the corresponding solution  of~\eqref{eq:nls} such that $  u(t_0,\cdot)=  u_0$. 

If $  u_0\in H^1_D(\mathcal G)$ satisfies 
$\|  u_0\|_{L^2(\mathcal G)}^2<\min\left\{1,\frac N2\right\}\|\mathsf Q\|_{L^2(\R)}^2$,
then $  u$ is global in $H^1(\mathcal G)$. Furthermore, if 
$\gamma>0$, then for any solution with $\|  u_0\|_{L^2(\mathcal G)}^2=\min\left\{1,\frac N2\right\}\|\mathsf Q\|_{L^2(\mathbb R)}^2$, $|  u(t,0)|$ remains bounded.

If $  u_0\in H^1_{\mathrm{rad}}(\mathcal G)$ satisfies $\|  u_0\|_{L^2(\mathcal G)}^2<\frac N2\|\mathsf Q\|_{L^2(\mathbb R)}^2$,
then $  u$ is global in $H^1_{\mathrm{rad}}(\mathcal G)$. Furthermore, if 
$\gamma>0$, then for any solution with 
$\|  u_0\|_{L^2(\mathcal G)}^2=\frac N2\|\mathsf Q\|_{L^2(\mathbb R)}^2$,
$|  u(t,0)|$ remains bounded. 
\end{proposition}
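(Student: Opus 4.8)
The plan is to reduce each assertion to an a priori bound via the blow-up alternative of Section~\ref{sec:cauchy}: since the mass $M(u)=\tfrac12\norm{u}_{L^2(\Gc)}^2$ is conserved, $\norm{u(t)}_{L^2(\Gc)}$ equals $\norm{u_0}_{L^2(\Gc)}$ throughout, so it suffices to control $\norm{u_x(t)}_{L^2(\Gc)}$ uniformly in $t$ in order to conclude global existence. (In the radial case one also uses that $\Hrad$ is preserved by the flow, which follows from uniqueness and the invariance of \eqref{eq:nls} under permutation of the edges.) The starting point is the conserved energy, which I rewrite as
\[
\tfrac12\norm{u_x(t)}_{L^2(\Gc)}^2
= E(u_0) + \tfrac16\norm{u(t)}_{L^6(\Gc)}^6 - \tfrac\gamma2\abs{u(t,0)}^2 .
\]

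First I would bound the $L^6$ term using the relevant Gagliardo--Nirenberg inequality --- \eqref{GN} in the general case, \eqref{GN_radial} in the radial case --- which yields a term $c\,\norm{u_0}_{L^2(\Gc)}^4\,\norm{u_x(t)}_{L^2(\Gc)}^2$ with $c=\tfrac1{2\norm{Q}_{L^2(\R)}^4}$, respectively $c=\tfrac{2}{N^2\norm{Q}_{L^2(\R)}^4}$. The subcriticality hypothesis is exactly the statement that $c\,\norm{u_0}_{L^2(\Gc)}^4<\tfrac12$, so this contribution can be absorbed into the left-hand side. When $\gamma\ge0$ the boundary term $-\tfrac\gamma2\abs{u(t,0)}^2$ is nonpositive and may be discarded, so the absorption alone provides a uniform bound on $\norm{u_x(t)}_{L^2(\Gc)}$.

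The genuine obstacle is the attractive case $\gamma<0$, where $-\tfrac\gamma2\abs{u(t,0)}^2=\tfrac{\abs{\gamma}}2\abs{u(t,0)}^2$ is a positive term that must itself be controlled. For this I would establish the trace estimate
\[
\abs{u(0)}^2 \le \tfrac2N\,\norm{u}_{L^2(\Gc)}\norm{u_x}_{L^2(\Gc)},
\qquad u\in H^1_D(\Gc),
\]
obtained edgewise from $\abs{u_j(0)}^2=-2\re\int_0^\infty \bar u_j u_j'\diff x\le 2\norm{u_j}_{L^2(\R_+)}\norm{u_j'}_{L^2(\R_+)}$, then using the continuity $u(0)=u_j(0)$ for every $j$, summing over $j$ and applying Cauchy--Schwarz. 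Inserting this into the energy identity and absorbing the $L^6$ term leaves an inequality of the form $a\norm{u_x(t)}_{L^2(\Gc)}^2\le b\,\norm{u_x(t)}_{L^2(\Gc)}+E(u_0)$ with $a>0$ and $b\ge0$ depending only on the data; solving the resulting quadratic inequality in $\norm{u_x(t)}_{L^2(\Gc)}$ gives a bound independent of $t$, hence global existence.

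For the critical-mass statements the coefficient $c\,\norm{u_0}_{L^2(\Gc)}^4$ equals exactly $\tfrac12$, so Gagliardo--Nirenberg only gives $\tfrac16\norm{u(t)}_{L^6(\Gc)}^6\le\tfrac12\norm{u_x(t)}_{L^2(\Gc)}^2$ and the gradient terms cancel in the energy identity instead of being strictly absorbed. What remains is precisely $\tfrac\gamma2\abs{u(t,0)}^2\le E(u_0)$, which for $\gamma>0$ bounds $\abs{u(t,0)}$ uniformly on the lifespan of $u$ --- the asserted conclusion (no global existence is claimed here, consistently with the blow-up solution of Theorem~\ref{th:blowup}). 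I expect the treatment of the attractive boundary term for $\gamma<0$, through the trace estimate and the quadratic absorption, to be the main point requiring care, the remainder following the classical mass-critical scheme.
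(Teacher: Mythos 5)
Your proof is correct and follows essentially the same route as the paper's: conservation of energy and mass combined with the Gagliardo--Nirenberg inequalities \eqref{GN}/\eqref{GN_radial}, a trace estimate at the vertex to handle the attractive term when $\gamma<0$, the blow-up alternative to conclude, and at critical mass the exact cancellation of the gradient terms leaving $\frac\gamma2|u(t,0)|^2\le E(u_0)$, which is also how the paper gets the boundedness of $u(t,0)$ for $\gamma>0$. The only cosmetic differences are your sharper trace constant $2/N$ obtained from continuity at the vertex and Cauchy--Schwarz (the paper simply uses $|u(t,0)|^2\le 2\|u_x(t)\|_{L^2}\|u(t)\|_{L^2}$) and your closing the $\gamma<0$ case by solving a quadratic inequality in $\|u_x(t)\|_{L^2}$, where the paper absorbs the trace term via Young's inequality with a small parameter $\ep$.
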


\begin{proof}
The proof follows by combining~\eqref{GN} with the conservation laws of~\eqref{eq:nls}.
We have
\begin{align*}
E(  u_0)=E(  u(t))
&=\frac12\|  u_x(t)\|_{L^2(\mathcal G)}^2-\frac16\|  u(t)\|_{L^6(\mathcal G)}^6+\frac{\gamma}{2}|  u(t,0)|^2\\
&\ge \frac12\left[1-\max\left\{1,\frac{4}{N^2}\right\}\left(\frac{\|  u_0\|_{L^2(\mathcal G)}}{\|\mathsf Q\|_{L^2(\mathbb R)}}\right)^4\right]\|  u_x(t)\|_{L^2(\mathcal G)}^2
+\frac{\gamma}{2}|  u(t,0)|^2.
\end{align*}
If $\gamma>0$, it follows that $\|  u_x(t)\|_{L^2(\mathcal G)}$ remains bounded provided 
$\|  u_0\|_{L^2(\mathcal G)}^2<\min\left\{1,\frac N2\right\}\|\mathsf Q\|_{L^2(\mathbb R)}^2$, and global existence in $H^1(\mathcal G)$ follows by the
blow-up alternative. Moreover in this case, if $\|  u_0\|_{L^2(\mathcal G)}^2=\min\left\{1,\frac N2\right\}\|\mathsf Q\|_{L^2(\mathbb R)}^2$, we see
that $|  u(t,0)|^2$ must remain bounded.

If $\gamma<0$, the inequality $|  u(t,0)|^2\le 2\|  u_x(t)\|_{L^2(\mathcal G)}\|  u(t)\|_{L^2(\mathcal G)}$ yields
\[
|  u(t,0)|^2\le \ep\|  u_x(t)\|_{L^2(\mathcal G)}^2+\frac{4}{\ep}\|  u(t)\|_{L^2(\mathcal G)}^2
\]
for any $\ep>0$, and it follows that
\[
E(  u_0)\ge 
\frac12\left[1-\max\left\{1,\frac{4}{N^2}\right\}\left(\frac{\|  u_0\|_{L^2(\mathcal G)}}{\|\mathsf Q\|_{L^2(\R)}}\right)^4-|\gamma|\ep\right]\|  u_x(t)\|_{L^2(\mathcal G)}^2
+\frac{2\gamma}{\ep}\|  u_0\|_{L^2(\mathcal G)}^2.
\]
If $\|  u_0\|_{L^2(\mathcal G)}^2<\min\left\{1,\frac N2\right\}\|\mathsf Q\|_{L^2(\R)}^2$, we can choose $\ep>0$ so that
\[
1-\max\left\{1,\frac{4}{N^2}\right\}\left(\frac{\|  u_0\|_{L^2(\mathcal G)}}{\|\mathsf Q\|_{L^2(\R)}}\right)^4-|\gamma|\ep>0,
\]
showing that 
$\|  u_x(t)\|_{L^2(\mathcal G)}$ remains bounded. The second part of the Proposition follows from similar arguments,  using~\eqref{GN_radial} instead of~\eqref{GN}. This concludes the proof.
\end{proof}

\subsection{Well-posedness in weighted spaces}
We establish in this section the well-posedness of the Cauchy problem in weighted spaces. Recall that the weighted spaces $\Sigma^k$ are defined in ~\eqref{eq:def_Sigma_k}.
The following lemma will be useful in the sequel.

\begin{lemma}
\label{lem:sigma_k_est}
 Let $k\in\mathbb N$ and $\alpha,\beta\in\mathbb N$ such that $\alpha+\beta\le k$. Then for any $u\in\Sigma^k$, the following holds:
    \[
    \norm{x^\alpha \partial_x^\beta u}_{L^2(\Gc)}\lesssim \norm{u}_{\Sigma^k}.
    \]
\end{lemma} 

\begin{proof}
We prove the result by induction on $k$. The case $k=0$ is trivial, since $\norm{u}_{\Sigma^0}^2=2\norm{u}_{L^2}^2$.
Now fix $k\in\N$ and suppose the result is true for all $j\le k$:
\begin{equation}\label{eq:induction_hypothesis}
\forall \alpha',\beta'\in\N, \ \alpha'+\beta'\le j, \quad 
\norm{x^{\alpha'} \partial_x^{\beta'} u}_{L^2(\Gc)}\lesssim \norm{u}_{\Sigma^j}.
\end{equation}
Let $\alpha,\beta\in\N$ such that $\alpha+\beta\le k+1$. To complete the proof, we will show that
\begin{equation}\label{eq:induction_conclusion}
\norm{x^{\alpha} \partial_x^{\beta} u}_{L^2(\Gc)}\lesssim \norm{u}_{\Sigma^{k+1}}.
\end{equation}
Firstly, if $\alpha+\beta\le k$, then~\eqref{eq:induction_conclusion} follows from~\eqref{eq:induction_hypothesis} with 
$j=k$. So we suppose that $\alpha+\beta= k+1$.
Next, we observe that the cases $(\alpha,\beta)=(0,k+1)$ and $(\alpha,\beta)=(k+1,0)$ are trivial. So we suppose
$1\le \alpha,\beta \le k, \ \alpha+\beta= k+1$ for the remainder of the proof.

Naturally, we start by estimating $\|x\partial_x^ku\|_{L^2(\Gc)}$. We shall use the
short hand notation $\partial_x^\beta u \equiv u^{(\beta)}$, for $\beta=1,\dots,k+1$.
Integrating by parts, we obtain
\begin{align*}
\|x u^{(k)}\|_{L^2(\Gc)}^2
&=\intg x^2 u^{(k)} \bar u^{(k)} \diff x \\
&=-2\intg x u^{(k)}\bar{u}^{(k-1)}\diff x-\intg x^2 u^{(k+1)}\bar{u}^{k-1}\diff x.
\end{align*}
By the induction hypothesis, we have
\[
\Big| \intg x u^{(k)}\bar{u}^{(k-1)}\diff x \Big|
\le \|x u^{(k-1)}\|_{L^2(\Gc)}\|u^{(k)}\|_{L^2(\Gc)} 
\le \|u\|_{\Sigma^k}^2
\le \|u\|_{\Sigma^{k+1}}^2.
\]
On the other hand, for any $\eps_1>0$, 
\[
\Big| \intg u^{(k+1)}x^2 \bar{u}^{(k-1)}\diff x \Big| 
\le \eps_1^2 \|u^{(k+1)}\|_{L^2(\Gc)}^2+\eps_1^{-2}\|x^2 u^{(k-1)}\|_{L^2(\Gc)}^2.
\]
It follows that
\begin{equation*}
\|x u^{(k)}\|_{L^2(\Gc)}^2
\le 2 \|u\|_{\Sigma^{k+1}}^2 + \eps_1^2 \|u^{(k+1)}\|_{L^2(\Gc)}^2+\eps_1^{-2}\|x^2 u^{(k-1)}\|_{L^2(\Gc)}^2,
\end{equation*}
for any $\eps_1>0$.
With the same arguments, we obtain
\[
\|x^2 u^{(k-1)}\|_{L^2(\Gc)}^2
\le 4 \|u\|_{\Sigma^{k+1}}^2 + \eps_2^2 \|xu^{(k)}\|_{L^2(\Gc)}^2+\eps_2^{-2}\|x^3 u^{(k-2)}\|_{L^2(\Gc)}^2,
\]
for any $\eps_2>0$. Continuing this process, and using
\[
\|u^{(k+1)}\|_{L^2(\Gc)}^2\le \|u\|_{\Sigma^{k+1}}^2, \quad
\|x^{k+1} u\|_{L^2(\Gc)}^2\le \|u\|_{\Sigma^{k+1}}^2,
\]
we find that, for any sequence of positive numbers $\eps_1,\eps_2,\dots,\eps_k$, 
\begin{align*}
\|x u^{(k)}\|_{L^2(\Gc)}^2
&\le (2+\eps_1^2)\|u\|_{\Sigma^{k+1}}^2 + \eps_1^{-2}\|x^2 u^{(k-1)}\|_{L^2(\Gc)}^2 \\
\|x^2 u^{(k-1)}\|_{L^2(\Gc)}^2
&\le 4 \|u\|_{\Sigma^{k+1}}^2 + \eps_2^2 \|xu^{(k)}\|_{L^2(\Gc)}^2+\eps_2^{-2}\|x^3 u^{(k-2)}\|_{L^2(\Gc)}^2 \\
\|x^3 u^{(k-2)}\|_{L^2(\Gc)}^2
&\le 6 \|u\|_{\Sigma^{k+1}}^2 + \eps_3^2 \|x^2u^{(k-1)}\|_{L^2(\Gc)}^2+\eps_3^{-2}\|x^4 u^{(k-3)}\|_{L^2(\Gc)}^2 \\
&\vdotswithin{ = } \\
\|x^{k-1} u^{(2)}\|_{L^2(\Gc)}^2
&\le 2(k-1) \|u\|_{\Sigma^{k+1}}^2 + \eps_{k-1}^2 \|x^{k-2}u^{(3)}\|_{L^2(\Gc)}^2
+\eps_{k-1}^{-2}\|x^{k}u^{(1)}\|_{L^2(\Gc)}^2 \\
\|x^k u^{(1)}\|_{L^2(\Gc)}^2
&\le (2k+\eps_k^{-2})\|u\|_{\Sigma^{k+1}}^2 + \eps_k^2 \|x^{k-1}u^{(2)}\|_{L^2(\Gc)}^2.
\end{align*}

We will now deduce~\eqref{eq:induction_conclusion}, for all $1\le \alpha,\beta \le k, \ \alpha+\beta= k+1$,
from this system of inequalities. The main difficulty is that, for $l=2,\dots,k-1$, 
line number $l$ involves terms appearing in lines $l-1$ and $l+1$. 
This can be remedied by injecting inequality $l-1$ into line $l$. For instance, for $l=2$, 
we obtain
\[
\|x^2 u^{(k-1)}\|_{L^2(\Gc)}^2
\le 4 \|u\|_{\Sigma^{k+1}}^2
+ \eps_2^2 \big( (2+\eps_1^2)\|u\|_{\Sigma^{k+1}}^2 + \eps_1^{-2}\|x^2 u^{(k-1)}\|_{L^2(\Gc)}^2 \big)
+ \eps_2^{-2}\|x^3 u^{(k-2)}\|_{L^2(\Gc)}^2,
\]
so that
\[
(1-\eps_2^2\eps_1^{-2})\|x^2 u^{(k-1)}\|_{L^2(\Gc)}^2
\le 4 \|u\|_{\Sigma^{k+1}}^2 + \eps_2^2(2+\eps_1^2)\|u\|_{\Sigma^{k+1}}^2 + \eps_2^{-2}\|x^3 u^{(k-2)}\|_{L^2(\Gc)}^2.
\]
Hence,
\[
\|x^2 u^{(k-1)}\|_{L^2}^2
\le (1-\eps_2^2\eps_1^{-2})^{-1} 
\Big(\big(4+\eps_2^2(2+\eps_1^2)\big)\|u\|_{\Sigma^{k+1}}^2
+\eps_2^{-2}\|x^3 u^{(k-2)}\|_{L^2(\Gc)}^2\Big),
\]
provided $1-\eps_2^2\eps_1^{-2}>0$. Next, combining this inequality with line $3$ and isolating
the term $\|x^3 u^{(k-2)}\|_{L^2(\Gc)}^2$, we obtain
\begin{multline*}
\|x^3 u^{(k-2)}\|_{L^2(\Gc)}^2
\le \big(1-\eps_3^2\eps_2^{-2}(1-\eps_2^2\eps_1^{-2})^{-1}\big)^{-1}
\Big(6\|u\|_{\Sigma^{k+1}}^2
+\eps_3^2(1-\eps_2^2\eps_1^{-2})^{-1}\big(4+\eps_2^2(2+\eps_1^2)\big)\|u\|_{\Sigma^{k+1}}^2 \\
+\eps_3^{-2}\|x^4 u^{(k-3)}\|_{L^2(\Gc)}^2\Big),   
\end{multline*}
provided $1-\eps_3^2\eps_2^{-2}(1-\eps_2^2\eps_1^{-2})^{-1}>0$. 

Iterating this process,
we find that, for each $l=2,\dots,k-1$, there exists a positive constant 
$C_l(\eps)=C_l(\eps_1,\eps_2,\dots,\eps_l)$ such that
\begin{multline*}
\|x^l u^{(k+1-l)}\|_{L^2(\Gc)}^2
\le C_l(\eps)\|u\|_{\Sigma^{k+1}}^2 \\
+\big(1-\eps_l^2\eps_{l-1}^{-2}\big(1-\eps_{l-1}^2\eps_{l-2}^{-2}
(1-\dots (1-\eps_2^2\eps_1^{-2})^{-1})^{-1}\dots\big)^{-1}\eps_l^{-2}\|x^{l+1} u^{(k-l)}\|_{L^2(\Gc)}^2,
\end{multline*}
provided $\eps_1,\eps_2,\dots,\eps_k$ can be chosen so that
\begin{equation}\label{eq:induction_on_eps}
1-\eps_l^2\eps_{l-1}^{-2}\big(1-\eps_{l-1}^2\eps_{l-2}^{-2}
(1-\dots (1-\eps_2^2\eps_1^{-2})^{-1})^{-1}>0 \quad \forall l=2,\dots,k.
\end{equation}
It is easy to verify that the following choice works:
\[
\eps_1=1, \quad \eps_l=2^{-(l-1)} \ \forall l=2,\dots,k.
\]
The argument stops at the last step, when $l=k$, where one merely obtains
\[
\|x^k u^{(1)}\|_{L^2(\Gc)}^2 \le C_k(\eps)\|u\|_{\Sigma^{k+1}}^2,
\]
for a positive constant $C_k(\eps)=C(\eps_1,\eps_2,\dots,\eps_k)$.
This final estimate allows one to return iteratively to all previous estimates and close them 
to obtain~\eqref{eq:induction_conclusion}.
\end{proof}

We now show that weighted spaces are preserved by the flow of~\eqref{eq:nls}. 
\begin{lemma} \label{lem:xk}
Let $k \in \N$. Let $u_0 \in \Sigma^2 \cap D(H_\gamma)$, $t_0 \in \R$ and let $u$ be the maximal solution of~\eqref{eq:nls} with $u(t_0) = u_0$. If $x^{k/2} u_0 \in L^2(\Gc)$ then $x^{k/2} u(t) \in L^2(\Gc)$ for all $t \in (T_{\min},T_{\max})$. Moreover $\|x^{\frac k 2}u\|_{L^2(\Gc)}$ is locally bounded in $(T_{\min},T_{\max})$.
\end{lemma}

\begin{proof}
We prove the result by induction. Let $k \geq 1$ and assume that the result is proved for $k-1$.
Let $I \subset (T_{\min},T_{\max})$ be a compact interval.

Let $\chi \in C^\infty([0,\infty) , [0,1])$ be a cut-off function such that $\chi(x)=1$ for $x\in[0,1]$ and $\chi(x)=0$ for $x\geq 2$. For $n \in \N^*$ and $x \in \Gc$ we set 
\[
\phi_n(x) = \left( \chi\left( \frac {x_j}n \right) x_j^k \right)_{1\leq j \leq N}.
\]
We have 
\begin{align*}
\partial_t \int_\Gc \phi_n |u(t)|^2 \diff x 
& = 2 \int_\Gc \phi_n \re \big( \bar u(t) u_t (t) \big) \diff x\\
& = 2 \int_\Gc \phi_n \im \big(\bar u(t) \big( H_\gamma u(t) - |u(t)|^4 u(t) \big) \big) \diff x\\
& =  -2 \int_\Gc \phi_n \im \big(\bar u(t)  \partial_{xx} u(t) \big) \diff x\\
& = 2 \int_\Gc \phi_n' \im \big(\bar u(t)  \partial_{x} u(t) \big) \diff x.
\end{align*}
By the Cauchy-Schwarz inequality we have on $I$
\[
\left| \partial_t \int_\Gc \phi_n |u(t)|^2 \diff x  \right| \leq 2 \|\phi_n' u\|_{L^2(\Gc)} \|\partial_x u\|_{L^2(\Gc)}.
\]
By the support properties of $\chi$, we have 
\[
|\phi_n'(x)|\leq \frac1n\left|\chi'\left(\frac xn\right)\right|x^k+\chi\left(\frac xn\right)x^{k-1}\leq 2\norm{\chi'}_{L^\infty}+x^{k-1}.
\]
Therefore, by induction assumption, there exists $C >0$ such that 
\[
\left| \partial_t \int_\Gc \phi_n |u(t)|^2 \diff x  \right| \leq C.
\]
Then for $t \in I$ we have 
\[
\int_\Gc \phi_n |u(t)|^2 \diff x \leq \|x^k u_0\|_{L^2(\Gc)} + C|I|.
\]
By the monotone convergence theorem we finally get 
\[
\int_\Gc x^k |u(t)|^2 \diff x \leq \|x^k u_0\|_{L^2(\Gc)} + C|I|,
\]
which proves the desired result. 
\end{proof}

Applying Lemma~\ref{lem:xk} we get with Lemma~\ref{lem:sigma_k_est} the following result in $\Sigma^2$.

\begin{proposition}
Let $u_0 \in D(H_\gamma)$, $t_0 \in \R$ and let $u$ be the maximal solution of~\eqref{eq:nls} with $u(t_0) = u_0$. If $u_0 \in \Sigma^2$ then $u\in C\left((T_{\min},T_{\max}),\Sigma^2\right)$.
\end{proposition}

\begin{proof}
    Applying Lemma~\ref{lem:xk} we get with Lemma~\ref{lem:sigma_k_est} that $u(t)\in \Sigma^2(\Gc)$ for all $t\in(T_{\min},T_{\max})$. To obtain continuity in $\Sigma^2(\Gc)$, it suffices to repeat the proof of Lemma~\ref{lem:xk} with $u(t)-u(t_0)$ instead of $u(t)$.
\end{proof}

\subsection{Dependency on the initial data}

We now give the proof of the modified continuous dependency property presented in Lemma~\ref{lem:conv-L2-solution}. 

\begin{proof}[Proof of Lemma~\ref{lem:conv-L2-solution}]
Since $  u_n$ is solution of~\eqref{eq:nls}, it verifies the Duhamel formula. For any $t\in J$, we have
\[
  u_n(t)=U(t)  u_{n,0}+i\int_{t_0}^t U(t-s)|  u_n(s)|^4  u_n(s)ds,
\]
where by $U(t)$ we denote the Schr\"odinger propagator on the graph, i.e.~$U(t)=e^{itH_\gamma}$.
A similar formula holds for $  u$. Therefore, 
\[
  u_n(t)-  u(t)=U(t)  (  u_{n,0}-  u_0)+i\int_{t_0}^t U(t-s)(|  u_n(s)|^4  u_n(s)-|  u(s)|^4  u(s))ds.
\]
Since $U$ is an isometry on $L^2(\mathcal G)$, we have 
\[
\norm{U(t)(  u_{n,0}-  u_0)}_{L^2(\mathcal G)}= 
\norm{  u_{n,0}-  u_0}_{L^2(\mathcal G)}.
\]
In addition, for the nonlinear part, we have
\begin{align*}
    \norm*{\int_{t_0}^t U(t-s)(|  u_n(s)|^4  u_n(s)-|  u(s)|^4  u(s))ds}_{L^2(\mathcal G)}
&\leq \int_{t_0}^t\norm*{|  u_n(s)|^4  u_n(s)-|  u(s)|^4  u(s)}_{L^2(\mathcal G)}\\
&\leq 
   C M\int_{t_0}^t\norm*{  u_n(s)-  u(s)}_{L^2(\mathcal G)}ds,
\end{align*}
where
\[
M:= \|  u\|_{L^\infty(J,H^1(\mathcal G))}^4+\sup_{n \in \N^*} \|  u_n\|_{L^\infty(J,H^1(\mathcal G))}^4<\infty.
\]
Therefore, 
\[
\norm*{  u_n(s)-  u(s)}_{L^2(\mathcal G)}\leq \norm{  u_{n,0}-  u_0}_{L^2(\mathcal G)}+C M\int_{t_0}^t\norm*{  u_n(s)-  u(s)}_{L^2(\mathcal G)}ds.
\]
The desired conclusion follows for Gronwall's argument. 
\end{proof}

\section{Linearized operators}\label{generalized_kernel.sec}

\newcommand{\Lop}{L}
\newcommand{\Lf}{\mathsf{L}}

In this section we record some useful properties of the linearized operators 
which will appear in our analysis (see e.g.~\eqref{eq:lin_op_h}).
We shall work here with {\em real-valued} radial functions, hence $L^2_\mathrm{rad}(\Gc)\equiv L^2_\mathrm{rad}(\Gc,\R)$ and $H^1_\mathrm{rad}(\Gc)\equiv H^1_\mathrm{rad}(\Gc,\R)$
for the rest of this section.

For $\beta \in (-N,N)$, let us define on $L^2_{\mathrm{rad}}(\Gc)$ the following operators, with domain $D_\beta\cap L^2_{\mathrm{rad}}(\Gc)$ (see \eqref{dom:Hgamma}):
\begin{equation*}
   \Lop_{+,\beta}= H_\beta+1-5 Q_\beta^4,\quad \Lop_{-,\beta} = H_\beta+1- Q_\beta^4.
\end{equation*}

We denote by $\Lf_{\pm,\beta}$ the corresponding bounded operators from $H^1_{\mathrm{rad}}(\mathcal G)$ to $H^1_{\mathrm{rad}}(\mathcal G)^\star$, such that 
\[
\dual{\Lf_{\pm ,\beta} \phi}{\psi} = \scalar{\Lop_{\pm,\beta}\phi}{\psi}_{L^2}, \quad \phi \in D_\beta, \ \psi \in H^1_{\mathrm{rad}}(\mathcal \Gc). 
\]
For instance for $\Lf_{+,\beta}$ we have, for $\phi,\psi \in H^1_{\mathrm{rad}}(\Gc)$,
\[
\dual{\Lf_{+,\beta} \phi}{\psi} = \scalar{\phi'}{\psi'}_{L^2} + \beta \phi(0) \overline{\psi(0)} + \scalar{\phi-5Q_\beta^4 \phi}{\psi}_{L^2}.
\]

\subsection{Unperturbed linearized operators}\label{sec:unpert_op}

In this subsection, we establish some results about the unperturbed linearized operators $\Lop_\pm = \Lop_{\pm,0}$.

We denote by $\sigma(A)$ and $\sigma_\textup{ess}(A)$ the spectrum and essential spectrum, respectively, of a linear operator
$A$ on $L^2_{\mathrm{rad}}(\Gc)$. The following spectral properties of the operators $\Lop_\pm$ are well-known 
in the context of radial functions on the line
(see e.g.~\cite{ChGuNaTs07,We85} and references therein)
and it is straightforward to transpose them to
$L^2_{\mathrm{rad}}(\Gc)$.
We recall that $\Lambda$ is the generator of dilations on $\Gc$ (see the definition in~\eqref{eq:dilation}).

\begin{lemma}\label{nondeg.lem}
The operators $\Lop_\pm$ have the following properties.
\begin{enumerate}[label={\upshape(\roman*)}]
\item $\Lop_\pm$ are selfadjoint and bounded from below.
\item $\sigma_\textup{ess}(\Lop_\pm)=[1,\infty)$.
\item $-8$ is the only eigenvalue of $\Lop_+$, with
$\ker(\Lop_++8I)=\Span(  Q^3)$.
\item $0$ is the only eigenvalue of $\Lop_-$, with $\ker(\Lop_-)=\Span(  Q)$. 
\item Setting $  \rho=\Lop_+^{-1}(  y^2  Q)$, we
have the relations
\begin{equation*}\label{kercycle}
\Lop_-  Q=0, \quad \Lop_+\Lambda   Q=-2  Q, \quad \Lop_-  y^2  Q=-4\Lambda   Q, \quad \Lop_+  \rho=  y^2  Q.
\end{equation*}
\end{enumerate}\label{Lpm.lem}
\end{lemma}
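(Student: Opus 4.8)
The plan is to reduce the whole statement to the classical spectral theory of one-dimensional Schrödinger operators, exploiting the radial structure. A function in $H^1_{\mathrm{rad}}(\Gc,\R)$ is determined by its common value $u_e$ on a single edge, so $u\mapsto\sqrt N\,u_e$ is a unitary map from $L^2_{\mathrm{rad}}(\Gc,\R)$ onto $L^2(\R_+,\R)$. Under this identification the conditions defining $D(H_0)$ reduce, for a radial function, to the single Neumann condition: continuity at the vertex is automatic, and $\sum_j u_j'(0)=0$ becomes $N u_e'(0)=0$, i.e.\ $u_e'(0)=0$. Since $Q$ is even, the potentials $Q^4$ and $5Q^4$ are even, so even extension across the origin realizes a unitary equivalence between the Neumann operator $-\partial_{yy}+1-cQ^4$ on $\R_+$ (with $c=1,5$) and the restriction to the \emph{even} subspace of the corresponding operator on the whole line. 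Thus $\mathsf L_\pm$ on $L^2_{\mathrm{rad}}(\Gc,\R)$ is unitarily equivalent to the even part of the standard operators on $L^2(\R)$, and items (i)--(iv) are inherited from the line, keeping only the even eigenfunctions.

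First I would record self-adjointness and semiboundedness for (i): the Neumann realization of $-\partial_{yy}+1$ is self-adjoint and bounded below by $1$, while $-cQ^4$ is a bounded multiplication operator that decays exponentially, hence a relatively (form-)compact perturbation; so $\mathsf L_\pm$ is self-adjoint on $D(H_0)\cap H^1_{\mathrm{rad}}(\Gc,\R)$ and bounded below. For (ii), Weyl's theorem on the stability of the essential spectrum under relatively compact perturbations gives $\sigma_{\mathrm{ess}}(\mathsf L_\pm)=\sigma_{\mathrm{ess}}(-\partial_{yy}+1)=[1,\infty)$.

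For the discrete spectrum I would use that $Q^4=3\sech^2(2y)$ and $5Q^4=15\sech^2(2y)$ are Pöschl--Teller potentials, for which $-\partial_{yy}-\mu(\mu+1)\alpha^2\sech^2(\alpha y)$ has eigenvalues $-\alpha^2(\mu-n)^2$ with $0\le n\le\lfloor\mu\rfloor$, the ground state $(n=0)$ being even and successive states alternating in parity. For $\mathsf L_+$ one has $\alpha=2$, $\mu=3/2$, giving eigenvalues $-8$ (even, $n=0$) and $0$ (odd, $n=1$) below the continuous spectrum; restricting to the even subspace removes the odd one, leaving $-8$ as the only eigenvalue, with eigenfunction $Q^3$. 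I would confirm this directly via $\mathsf L_+Q^3=-8Q^3$, computing $(Q^3)''=9Q^3-5Q^7$ from $Q''=Q-Q^5$ and $(Q')^2=Q^2-\tfrac13Q^6$; this proves (iii). For $\mathsf L_-$ one has $\mu=1/2$, so the only eigenvalue is $0$ (even, $n=0$) with eigenfunction $Q$, which is (iv). (Alternatively, (iii)--(iv) may be quoted from the references cited in the excerpt, the only added input being the parity of each eigenfunction.)

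The relations in (v) I would check by direct computation, writing $\Lambda Q=\tfrac12 Q+yQ'$ and repeatedly invoking $Q''=Q-Q^5$: one gets $\mathsf L_-Q=-Q''+Q-Q^5=0$, then $\mathsf L_+\Lambda Q=-2Q$ (the scaling identity), and $\mathsf L_-(y^2Q)=-2Q-4yQ'=-4\Lambda Q$, where the $y^2$-terms cancel precisely because $Q''-Q+Q^5=0$. Finally, since $0\notin\sigma(\mathsf L_+)$ on the radial subspace by (iii) and $y^2Q\in L^2_{\mathrm{rad}}(\Gc)$ by the exponential decay of $Q$, the operator $\mathsf L_+$ is boundedly invertible there, so $\rho=\mathsf L_+^{-1}(y^2Q)$ is well defined and satisfies $\mathsf L_+\rho=y^2Q$ by construction. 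The only genuine care lies in setting up the unitary equivalence with the even subspace and in tracking the parity of each eigenfunction, since it is exactly the parity that discards the odd kernel element $Q'$ of $\mathsf L_+$ on the full line and makes $\mathsf L_+$ invertible on the radial space needed for (v); the rest is routine.
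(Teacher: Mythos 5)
Your proof is correct, but it is worth noting that the paper does not actually prove this lemma at all: it simply declares the spectral properties of $\mathsf L_\pm$ ``well-known in the context of radial functions on the line'' with a pointer to \cite{ChGuNaTs07,We85}, and asserts that the transposition to $L^2_{\textup{rad}}(\Gc,\R)$ is straightforward. What you have done is carry out precisely that transposition, and your write-up supplies the two ingredients the paper leaves implicit. First, the reduction itself: for radial functions the Kirchhoff condition $\sum_j u_j'(0)=0$ collapses to the single Neumann condition $u_e'(0)=0$, and even extension then identifies $\mathsf L_\pm$ with the even part of the classical line operators; this is the correct mechanism, and your observation that the parity filter is exactly what discards the odd kernel element $Q'$ of $L_+$ on the line is the key point --- it is what makes $\mathsf L_+$ invertible on the radial subspace, and hence what underlies the bijectivity of $L_+$ in Proposition~\ref{prop:Ran-Lpm} and the definition of $\rho$ in (v). Second, where the paper would import (iii)--(iv) wholesale from \cite{We85,ChGuNaTs07}, you make them explicit via the P\"oschl--Teller structure ($Q^4=3\sech^2(2y)$, $5Q^4=15\sech^2(2y)$, with $\mu=\tfrac12$ and $\mu=\tfrac32$ respectively), including the parity of each bound state, and you verify $\mathsf L_+Q^3=-8Q^3$ and the algebraic identities of (v) by hand; your computations $(Q^3)''=9Q^3-5Q^7$, $(Q')^2=Q^2-\tfrac13Q^6$, and $\mathsf L_-(y^2Q)=-2Q-4yQ'$ all check out. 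The only detail I would add for completeness is the (trivial) verification that the functions appearing in (iii) and (v) actually lie in the operator domain, i.e.\ satisfy the vertex condition: $Q^3$, $\Lambda Q$ and $y^2Q$ all have vanishing derivative at $y=0$ because $Q$ is even, so this is automatic, and it is implicitly contained in your even-extension framework. In short: your route is the one the paper gestures at, executed in full; it buys a self-contained proof with explicit spectra where the paper buys brevity by citation.
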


From these results on $\Lop_\pm$, we deduce the following properties of $\Lf_\pm$.

\begin{proposition} \label{prop:Ran-Lpm}
The operators $\Lf_\pm$ have the following properties.
\begin{enumerate}[label={\upshape(\roman*)}]
\item $\Lf_+ : H^1_{\mathrm{rad}}(\Gc) \to H^1_{\mathrm{rad}}(\Gc)^\star$ is bijective.
\item $\ker(\Lf_-) = \Span(  Q)$ and 
$\rge(\Lf_-) = \{   \f \in H^1_{\mathrm{rad}}(\Gc)^\star \, : \,   \f(  Q) = 0 \}$.

\end{enumerate}
\end{proposition}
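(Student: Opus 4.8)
The plan is to deduce the mapping properties of $L_\pm \colon H^1_{\textup{rad}}(\Gc,\R) \to H^{-1}_{\textup{rad}}(\Gc,\R)$ from the corresponding spectral information about the selfadjoint realizations $\mathsf L_\pm$ on $L^2_{\textup{rad}}(\Gc,\R)$ recorded in Lemma~\ref{Lpm.lem}, combined with an abstract Fredholm/coercivity argument. The key structural fact is that $L_\pm = -\partial_{yy} + 1 - c\,Q^4$ (with $c=1$ or $c=5$) is a compact perturbation of $-\partial_{yy}+1$, which is an isomorphism $H^1_{\textup{rad}} \to H^{-1}_{\textup{rad}}$ since its quadratic form $\norm{\f'}_{L^2}^2 + \norm{\f}_{L^2}^2$ is exactly the $H^1$-norm. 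Because $Q \in C^1_{\exp}$ decays exponentially, multiplication by $Q^4$ maps $H^1_{\textup{rad}}$ compactly into $L^2_{\textup{rad}} \hookrightarrow H^{-1}_{\textup{rad}}$, so $L_\pm$ is a Fredholm operator of index zero.

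For part (i), I would argue that $L_+$ is injective on $H^1_{\textup{rad}}(\Gc,\R)$: any $\f$ in its kernel solves $L_+\f=0$ in the distributional sense, and elliptic regularity bootstraps $\f$ into $D(\mathsf L_+) = D(H_0)\cap H^1_{\textup{rad}}$, so $\f \in \ker(\mathsf L_+)$. By Lemma~\ref{Lpm.lem}(iii) the only eigenvalue of $\mathsf L_+$ is $-8$, so $0$ is not an eigenvalue and $\ker(\mathsf L_+)=\{0\}$, whence $\f=0$. Injectivity together with Fredholm index zero gives surjectivity, so $L_+$ is bijective.

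For part (ii), the same regularity argument identifies $\ker(L_-)$ with $\ker(\mathsf L_-)$, which equals $\Span(Q)$ by Lemma~\ref{Lpm.lem}(iv); this gives the kernel statement. For the range, since $L_-$ is selfadjoint-like (symmetric with respect to the $H^1$–$H^{-1}$ duality pairing, because $(\f - Q^4\f,\psi)_{L^2}$ is symmetric in $\f,\psi$), the Fredholm alternative yields $\mathsf{Ran}(L_-) = \ker(L_-)^\perp$ in the duality sense, i.e.\ $\mathsf{Ran}(L_-) = \{\f \in H^{-1}_{\textup{rad}} : \dual{\f}{Q} = 0\}$. Here I must take care to interpret the annihilator correctly: an element $\f\in H^{-1}_{\textup{rad}}$ lies in the range iff it vanishes against every element of the kernel of the (formal) adjoint, and by symmetry this kernel is again $\Span(Q)$, giving precisely the condition $\f(Q)=0$.

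The main obstacle I anticipate is making the transition between the unbounded $L^2$ operators $\mathsf L_\pm$ and the bounded $H^1$–$H^{-1}$ operators $L_\pm$ fully rigorous, specifically the elliptic regularity step showing a distributional kernel element is genuinely in the operator domain $D(H_0)\cap H^1_{\textup{rad}}$ (so that the vertex conditions encoded in $D(H_0)$ are respected), and the clean statement of the Fredholm alternative in the non-Hilbert pairing $\langle H^{-1}, H^1\rangle$. One must verify that the quadratic form $\dual{L_-\f}{\psi}$ is symmetric and bounded, that the embedding of the perturbation is compact, and that the index-zero conclusion transfers correctly; once these are in place, the spectral data from Lemma~\ref{Lpm.lem} close the argument. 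The computations themselves (compactness of multiplication by $Q^4$, coercivity of $-\partial_{yy}+1$) are routine and I would not grind through them in detail.
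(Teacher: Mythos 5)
Your proposal is correct and takes essentially the same route as the paper: both arguments factor $L_\pm$ as a compact perturbation of the isomorphism $-\partial_{yy}+1\colon H^1_{\textup{rad}}(\Gc,\R)\to H^{-1}_{\textup{rad}}(\Gc,\R)$ (the paper writes $L_-=(\mathrm{Id}-K)(-\partial_{yy}+1)$ with $K=Q^4(-\partial_{yy}+1)^{-1}$ compact on $H^{-1}_{\textup{rad}}$, which is your compactness of multiplication by $Q^4$ in factored form), identify $\ker(L_\pm)$ with $\ker(\mathsf L_\pm)$ by regularity into $D(H_0)\cap H^1_{\textup{rad}}$, and conclude via closed range plus the Fredholm alternative and the spectral data of Lemma~\ref{Lpm.lem}. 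The points you flag as needing care (vertex conditions in the regularity step, symmetry of the pairing, the annihilator identification $\mathsf{Ran}(L_-)=\ker(L_-)^\perp$) are exactly the ones the paper handles, so there is no gap.
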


\begin{proof}
We have $\Span(  Q) = \ker(\Lop_-) \subset \ker(\Lf_-)$, and if 
$  \varphi \in \ker(\Lf_-)$ we have $  \varphi \in D(\Lop_-)$ and $\mathsf{\Lf_-}   \varphi = 0$. 
This proves that $\ker(\Lf_-) = \Span(  Q)$. 

Since $\Lf_- = (\mathsf{Id}_{H^1_{\mathrm{rad}}(\Gc)^\star} - K) (-\partial_y^2 + 1)$ where $K =   Q^4 (-\partial_y^2 + 1)^{-1}$ is a compact operator on $H^1_{\mathrm{rad}}(\Gc)^\star$, its range is closed. 
Then $\rge(\Lf_-) = \ker(\Lf_-)^\bot$ (with $\perp$ denoting here $H^1(\Gc)^\star$ -- $H^1(\Gc)$ orthogonality) and the second statement of the proposition follows. 

The first statement about $\Lf_+$ is proved similarly.
\end{proof}

Next, we give some useful integral identities.

\begin{lemma}\label{intid.lem}
Let $  Q$, $\Lambda   Q$ and $  \rho$ be as in Lemma~\ref{prop:Ran-Lpm}. Then:
\begin{enumerate}[label={\upshape(\roman*)}]
\item $\int_\Gc   y^2  Q\Lambda   Q \diff   y
= - \int_\Gc   y^2  Q^2 \diff   y$;
\item $\int_\Gc   Q\Lambda   Q \diff  y = 0$;
\item $\int_\Gc   Q  \rho \diff   y = \frac12 \int_\Gc   y^2  Q^2 \diff   y$. 
\end{enumerate}
\end{lemma}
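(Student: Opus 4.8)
The plan is to reduce everything to integration by parts against the explicit exponential decay of $Q$, together with the operator identities already recorded in Lemma~\ref{Lpm.lem}(v). For a weight $w$, writing $\Lambda Q=\tfrac12 Q+yQ'$ and integrating by parts on each edge (recall that for radial functions $\int_\Gc=N\int_0^\infty$, so the common factor $N$ cancels in each identity), I would first establish the single master formula
\[
\int_\Gc w\,Q\,\Lambda Q \diff y
= \tfrac12\int_\Gc w\,Q^2\diff y + \int_\Gc w\,y\,QQ'\diff y
= \tfrac12\int_\Gc w\,Q^2\diff y - \tfrac12\int_\Gc (w+yw')\,Q^2\diff y
= -\tfrac12\int_\Gc y\,w'\,Q^2\diff y,
\]
where in the middle step I used $yQQ'=\tfrac12 y(Q^2)'$ and integrated by parts once. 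Specializing $w=1$ gives $w'=0$ and hence statement (ii), while $w=y^2$ gives $yw'=2y^2$ and hence $\int_\Gc y^2 Q\,\Lambda Q\diff y=-\int_\Gc y^2Q^2\diff y$, which is statement (i).

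For (iii) I would avoid computing $\rho$ explicitly and instead exploit self-adjointness of $\mathsf L_+$ on $L^2_{\textup{rad}}(\Gc,\R)$ together with the two relations $\mathsf L_+\Lambda Q=-2Q$ and $\mathsf L_+\rho=y^2Q$ from Lemma~\ref{Lpm.lem}(v). Writing $Q=-\tfrac12\mathsf L_+\Lambda Q$, I get
\[
\int_\Gc Q\rho\diff y
=-\tfrac12\int_\Gc (\mathsf L_+\Lambda Q)\,\rho\diff y
=-\tfrac12\int_\Gc \Lambda Q\,(\mathsf L_+\rho)\diff y
=-\tfrac12\int_\Gc y^2Q\,\Lambda Q\diff y
=\tfrac12\int_\Gc y^2Q^2\diff y,
\]
where the last equality is exactly statement (i). Thus (iii) follows from (i) and the already-established spectral relations, with no further computation.

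The only genuine point to check—rather than a real obstacle—is the vanishing of all boundary contributions at $y=0$ and $y=\infty$, and the legitimacy of the self-adjoint pairing used for (iii). Both are immediate from the explicit profile $Q(y)=3^{1/4}\sech^{1/2}(2y)$: the factor $y$ (resp.\ $y^3$) kills the boundary term at the vertex in the $w=1$ (resp.\ $w=y^2$) case, while the exponential decay of $Q$ and $Q'$ annihilates all terms at infinity and guarantees that $\Lambda Q$ and $\rho=\mathsf L_+^{-1}(y^2Q)$ lie in $D(\mathsf L_+)$, so that moving $\mathsf L_+$ across the inner product in the computation of (iii) is justified. I would therefore present the master integration-by-parts formula first, deduce (i) and (ii), and then obtain (iii) as a one-line consequence of the operator relations.
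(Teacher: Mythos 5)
Your proposal is correct and follows essentially the same route as the paper: items (i)--(ii) are reduced to a single integration-by-parts identity (the paper's version is parametrized by $y^\mu Q^r$ where yours uses a general weight $w$ with $r=1$, a cosmetic difference), and item (iii) is obtained by the identical self-adjointness computation using $\mathsf L_+\Lambda Q=-2Q$ and $\mathsf L_+\rho=y^2Q$ to reduce to (i). Your explicit verification of the vanishing boundary terms at the vertex and of the domain conditions justifying the pairing is a welcome addition that the paper leaves implicit.
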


\begin{proof}
(i) For real parameters $\mu>-1$ and $r\geq1$, we will show that
\begin{equation}\label{genintid}
\int_\Gc   y^\mu   Q^r \Lambda   Q \diff   y
= \left(\frac{1}{2}-\frac{\mu+1}{r+1}\right) 
\int_\Gc   y^\mu   Q^{r+1}\diff   y,
\end{equation}
from which (i) follows. Now, to prove~\eqref{genintid}, we only need
to show that 
\begin{equation}\label{auxintid}
\int_\Gc   y^\mu   Q^ry\,  Q_y \diff   y 
= -\frac{\mu+1}{r+1}\int_\Gc   y^\mu   Q^{r+1} \diff   y.
\end{equation}
Integrating by parts (no vertex terms arise since $\mu+1>0$), we have
\begin{align*}
\int_\Gc   y^{\mu+1}    Q^r  Q_y \diff   y
&= -\int_\Gc [(\mu+1)  Q^r  y^\mu + r  y^{\mu+1}  Q^{r-1}  Q_y]  Q \diff   y \\
&= - (\mu+1) \int_\Gc   y^\mu   Q^{r+1} \diff   y-r\int_\Gc   Q^r  Q_y  y^{\mu+1} \diff   y,
\end{align*}
which is equivalent to~\eqref{auxintid}. This completes the proof of~\eqref{genintid}.

(ii) The second statement follows directly from~\eqref{genintid} with $\mu=0$ and $r=1$, but 
the following argument is more instructive.
Since the $L^2$ scaling $  Q_\lambda(y)=\lambda^{\frac{1}{2}}  Q(\lambda y)$ leaves
the $L^2$ norm invariant, we have that
$$
0=\frac{\dif}{\dif\lambda}\Vert   Q_\lambda\Vert_{L^2(\mathcal G)}^2
=2\int_\Gc   Q_\lambda \frac{\partial   Q_\lambda}{\partial\lambda}
\diff   y, \quad \forall\lambda>0.
$$
The result follows by letting $\lambda=1$. 

(iii) Using Lemma~\ref{Lpm.lem} and (i), we have the identities
$$
\int_\Gc   Q  \rho\diff   y=
-\frac12\int_\Gc \Lop_+\Lambda   Q  \rho\diff   y=
-\frac12\int_\Gc \Lambda   Q \Lop_+  \rho\diff   y=
-\frac12\int_\Gc \Lambda   Q  y^2  Q\diff   y=
\frac12 \int_\Gc   y^2  Q^2\diff   y.
$$
The proof is complete.
\end{proof}

We now state well-known coercivity properties of the operators $\Lf_\pm$, which we prove 
for the reader's convenience. We start with positivity properties.

\begin{lemma}\label{positivity.lem}
Denoting by $\perp$ the orthogonality with respect to $(\cdot,\cdot)_{L^2}$,
the operators $\Lf_\pm$ satisfy the following positivity relations in $H^1_\mathrm{rad}(\mathcal G)$:
\begin{align}
\langle \Lf_-  v,  v\rangle_{H^1(\mathcal G)^\star\times H^1(\mathcal G)}
& \gtrsim \|  v\|_{H^1(\mathcal G)}^2 \ \text{ on } {  \rho}^\perp, \label{pos_rel1} \\
\langle \Lf_+  v,  v\rangle_{H^1(\mathcal G)^\star\times H^1(\mathcal G)} \label{pos_rel2}
& \gtrsim \|  v\|_{H^1(\mathcal G)}^2 \ \text{ on } \{  Q,  y^2  Q\}^\perp. 
\end{align}
\end{lemma}

\begin{proof}

To prove~\eqref{pos_rel1}, we first observe that Lemma~\ref{Lpm.lem} implies 
\begin{equation}\label{pos_rel0}
\langle \Lf_-  w,  w\rangle
 \ge \|  w\|_{L^2}^2, \ \forall   w\in Q^\perp.
\end{equation}

Let $  v \in {  \rho}^\bot$. Let $  w\in {  Q}^\perp$ and $t \in \R$ such that $  v=  w+tQ$. Since $(  Q,  \rho)_{L^2}\neq0$ (see Lemma~\ref{intid.lem}), we necessarily have 
$$
t=-\frac{(  w,  \rho)_{L^2}}{(  Q,  \rho)_{L^2}},
$$
and hence
$$
\|  v\|_{L^2}^2=\|  w\|_{L^2}^2+2t(  w,  Q)_{L^2}+t^2\|  Q\|_{L^2}^2
=\|  w\|_{L^2}^2+\frac{(  w,  \rho)_{L^2}^2}{(  Q,  \rho)_{L^2}^2}\|  Q\|_{L^2}^2
\leq \|  w\|_{L^2}^2\left(1+\frac{\|  \rho\|_{L^2}^2\|  Q\|_{L^2}^2}{(  Q,  \rho)_{L^2}^2}\right).
$$
Setting 
\[
C_1 = \left(1+\frac{\|  \rho\|_{L^2}^2\|  Q\|_{L^2}^2}{(  Q,\rho)_{L^2}^2}\right)^{-1} > 0,
\]
it then follows by~\eqref{pos_rel0} that
\begin{equation}\label{pos_rel0'}
\langle \Lf_-  v,  v\rangle
= \langle \Lf_-  w,  w\rangle \ge \|  w\|_{L^2}^2\ge C_1\|  v\|_{L^2}^2.
\end{equation}
To deduce~\eqref{pos_rel1}, we argue by contradiction. Suppose there exists a sequence
$\{  v_n\}$ in $H^1_\mathrm{rad}(\mathcal G) \cap {  \rho}^\bot$ such that $\|  v_n\|_{H^1} = 1$ for all $n$ and 
$\langle \Lf_-  v_n,  v_n\rangle\to0$ as $n\to\infty$. Then~\eqref{pos_rel0'} implies $\|  v_n\|_{L^2}\to0$, so $\|\partial_y  v_n\|_{L^2}\to1$. Then
\begin{align*}
\langle \Lf_-  v_n,  v_n\rangle
&=\|\partial_y  v_n\|_{L^2}^2+\|  v_n\|_{L^2}^2
-\int_\Gc   Q^{p-1}  v_n^2\diff   y \\
&\ge \|\partial_y  v_n\|_{L^2}^2+\|  v_n\|_{L^2}^2\big(1-\|  Q\|_{L^\infty}^{p-1}\big) \to1.
\end{align*}
This contradiction concludes the proof of~\eqref{pos_rel1}.

The proof of~\eqref{pos_rel2} follows in the same way from 
\begin{equation*}
\langle \Lf_+  v,  v\rangle \gtrsim \|  v\|_{L^2}^2 \ \text{ on } \{  Q,  y^2  Q\}^\perp.
\end{equation*}
However, the proof of this inequality is much more involved than that
of~\eqref{pos_rel0'}, see~\cite{We85}.
\end{proof}

\begin{lemma}\label{coercivity.lem}
There exist $\mu_-,\mu_+>0$ such that, for all $  v\in H^1_\mathrm{rad}(\mathcal G)$,
\begin{equation}\label{coerc1}
\langle \Lf_-  v,  v\rangle_{H^1(\mathcal G)^\star\times H^1(\mathcal G)} \ge \mu_- \|  v\|_{H^1(\mathcal G)}^2
-\mu_-^{-1}(  v,  \rho)_{L^2(\mathcal G)}^2
\end{equation}
and 
\begin{equation}\label{coerc2}
\langle \Lf_+  v,  v\rangle_{H^1(\mathcal G)^\star\times H^1(\mathcal G)} \ge \mu_+ \|  v\|_{H^1(\mathcal G)}^2
-\mu_+^{-1}\big[(  v,  Q)_{L^2(\mathcal G)}^2+(  v,  y^2  Q)_{L^2(\mathcal G)}^2\big].
\end{equation}
There exists $\mu>0$ such that 
\begin{multline}\label{coerc3}
\langle \Lf_+  v,  v\rangle_{H^1(\mathcal G)^\star\times H^1(\mathcal G)}+\langle \Lf_-  v,  v\rangle_{H^1(\mathcal G)^\star\times H^1(\mathcal G)} \\
\ge \mu \|  v\|_{H^1(\mathcal G)}^2 -\mu^{-1}\big[(  v,  Q)_{L^2(\mathcal G)}^2+(  v,  y^2  Q)_{L^2(\mathcal G)}^2+(  v,  \rho)_{L^2(\mathcal G)}^2\big].
\end{multline}
\end{lemma}

\begin{proof}
We will prove~\eqref{coerc2}.
The proof of~\eqref{coerc1} is similar and will therefore be omitted. 
Estimate~\eqref{coerc3} is a direct consequence of~\eqref{coerc1} and~\eqref{coerc2}.
We will use the same shorthand notation for inner / duality products and norms as in the proof of Lemma~\ref{positivity.lem}.

Any $  v\in H^1_\mathrm{rad}(\mathcal G)$ can be written as
$$
  v=  w+s  Q+t  y^2  Q, \quad   w\in\{  Q,  y^2  Q\}^\perp,
$$
with
$$
s=\frac {(  v,  Q)_{L^2} \|   y^2   Q\|_{L^2}^2 - (  v,  y^2  Q)_{L^2}(  Q,  y^2  Q)_{L^2}}
{\|  Q\|_{L^2}^2 \|  y^2   Q\|_{L^2}^2 - (  Q,  y^2  Q)_{L^2}^2}, \quad 
t=\frac {(  v,  y^2  Q)_{L^2} \|  Q\|_{L^2}^2 - (  v,  Q)_{L^2}(  Q,  y^2  Q)_{L^2}}
{\|  Q\|_{L^2}^2 \|  y^2   Q\|_{L^2}^2 - (  Q,  y^2  Q)_{L^2}^2}.
$$

Then
\begin{align*}
\|  v\|_{H^1}^2
&=\|  w\|_{H^1}^2+s^2\|  Q\|_{H^1}^2+t^2\|  y^2  Q\|_{H^1}^2+2st(   Q,  y^2  Q)_{H^1}
+2s(  w,  Q)_{H^1}+2t(   w,  y^2  Q)_{H^1} \\
&\le \|  w\|_{H^1}^2+s^2\|  Q\|_{H^1}^2+t^2\|  y^2  Q\|_{H^1}^2
+(s^2+t^2)|(  Q,  y^2  Q)_{H^1}| \\
&\hspace{2cm}+2s\|  w\|_{H^1}\|  Q\|_{H^1}+2t\|  w\|_{H^1}\|  y^2  Q\|_{H^1} \\
&\le \|  w\|_{H^1}^2+s^2\|  Q\|_{H^1}^2+t^2\|  y^2  Q\|_{H^1}^2
+(s^2+t^2)|(  Q,  y^2  Q)_{H^1}| \\
&\hspace{2cm}+s^2+\|  w\|_{H^1}^2\|  Q\|_{H^1}^2+t^2+\|  w\|_{H^1}^2\|  y^2  Q\|_{H^1}^2 \\
&\le \big(1+\|  Q\|_{H^1}^2+\|  y^2  Q\|_{H^1}^2\big)\|  w\|_{H^1}^2
+\big(1+\|  Q\|_{H^1}^2+|(  Q,  y^2  Q)_{H^1}|\big)s^2 \\
&\hspace{2cm}
+\big(1+\|  y^2  Q\|_{H^1}^2+|(  Q,  y^2  Q)_{H^1}|\big)t^2.
\end{align*}
Hence, there exist constants $A,B,C>0$ such that
\begin{equation}\label{norm_lowerbound}
\|  w\|_{H^1}^2 \ge A \|  v\|_{H^1}^2-B(  v,  Q)^2-C(  v,  y^2  Q)^2.
\end{equation}
Using similar calculations,~\eqref{pos_rel2} yields a constant $K>0$ such that,
for any $\eps>0$,
\begin{align*}
&\langle \Lf_+  v,  v\rangle \\
&=\langle \Lf_+  w,  w\rangle
+s^2\langle \Lf_+  Q,  Q\rangle+t^2\langle \Lf_+  y^2  Q,  y^2  Q\rangle
+2st\langle \Lf_+  Q,  y^2  Q\rangle
+2s\langle   w,\Lf_+  Q\rangle+2t\langle   w,\Lf_+  y^2  Q\rangle \\
&\ge K\|  w\|_{H^1}^2-\big(|\langle \Lf_+  Q,  Q\rangle|+|\langle \Lf_+  Q,  y^2  Q\rangle|\big)s^2
-\big(|\langle \Lf_+  y^2  Q,  y^2  Q\rangle|+|\langle \Lf_+  Q,  y^2  Q\rangle|\big)t^2 \\
&\hspace{2cm}
-2\eps^{-1}|s|\eps\|  w\|_{L^2}\|\Lf_+  Q\|_{L^2}-2\eps^{-1}|t|\eps\|  w\|_{L^2}\|\Lf_+  y^2  Q\|_{L^2} \\
&\ge K\|  w\|_{H^1}^2-\big(|\langle \Lf_+  Q,  Q\rangle|+|\langle \Lf_+  Q,  y^2  Q\rangle|\big)s^2
-\big(|\langle \Lf_+  y^2  Q,  y^2  Q\rangle|+|\langle \Lf_+  Q,  y^2  Q\rangle|\big)t^2 \\ 
&\hspace{2cm}
-\big(\eps^{-2}s^2+\eps^2\|\Lf_+  Q\|_{L^2}^2\|  w\|_{H^1}^2\big)
-\big(\eps^{-2}t^2+\eps^2\|\Lf_+  y^2  Q\|_{L^2}^2\|  w\|_{H^1}^2\big) \\
&\ge \Big[K-\eps^2\big(\|\Lf_+  Q\|_{L^2}^2+\|\Lf_+  y^2  Q\|_{L^2}^2\big)\Big]\|  w\|_{H^1}^2 \\ 
&\hspace{1cm}
-\big(\eps^{-2}+|\langle \Lf_+  Q,  Q\rangle|+|\langle \Lf_+  Q,  y^2  Q\rangle|\big)s^2
-\big(\eps^{-2}+|\langle \Lf_+  y^2  Q,  y^2  Q\rangle|+|\langle \Lf_+  Q,  y^2  Q\rangle|\big)t^2.
\end{align*}
Therefore, choosing $\eps>0$ small enough, there exist constants $K_1,K_2,K_3>0$ such that
$$
\langle \Lf_+  v,  v\rangle \ge K_1 \|  w\|_{H^1}^2 -K_2 (  v,  Q)^2
- K_3 (  v,  y^2  Q)^2.
$$
Combining this with~\eqref{norm_lowerbound} concludes the proof of~\eqref{coerc2}.
\end{proof}

\subsection{Linearized operators with vertex condition}

We collect in the following proposition the properties of $\Lop_{\pm,\beta}$ which will be useful for our analysis. 

\begin{proposition} \label{prop:spectre-Lpm} Let $\beta \in (-N,N)$. 
\begin{enumerate}[label={\upshape(\roman*)}]
\item $\Lop_{\pm,\beta}$ is selfadjoint and bounded from below.
\item $\sigma_\textup{ess}(\Lop_{\pm,\beta})=[1,\infty)$.
\item $\Lop_{+,\beta}$ has a unique negative eigenvalue $\lambda_{+,\beta}$ and the rest of the spectrum is (strictly) positive. 
\item $0 \in \sigma(\Lop_{-,\beta})$, $\ker(\Lop_{-,\beta}) = \Span(  Q_\beta)$ and the rest of the spectrum is (strictly) positive.
\end{enumerate}
\end{proposition}

\begin{proof}
The operators $\Lop_{\pm,\beta}$ are symmetric and bounded perturbations of $H_\beta$, so they are selfadjoint and bounded from below.

The operator $H_0 + 1$, with domain $D_0$, is selfadjoint and we have $\sigma(H_0+1) = \sigma_\textup{ess}(H_0+1) = [1,+\infty)$. On the other hand we have on $L^2_{\mathrm{rad}}(\Gc)$
\begin{align*}
(\Lop_{+,\beta}-i)^{-1} - (H_0+1 -i)^{-1}
& = (\Lf_{+,\beta}-i)^{-1} - (\mathsf H_0+1 -i)^{-1}\\
& = -(\Lf_{+,\beta}-i)^{-1} (\beta \delta - 5 Q_\beta^4 ) (H_0+1 -i)^{-1}.
\end{align*}
Since $(H_0+1 -i)^{-1}$ is bounded from $L^2_{\mathrm{rad}}(\Gc)$ to $D_0$, $(\beta \delta - 5 Q_\beta^4 )$ is compact from $D_0$ to $H^1_{\mathrm{rad}}(\Gc)^\star$ and $(\Lf_{+,\beta}-i)^{-1}$ is bounded from $H^1_{\mathrm{rad}}(\Gc)^\star$ to $L^2_{\mathrm{rad}}(\Gc)$, we obtain that $(\Lop_{+,\beta}-i)^{-1} - (H_0+1 -i)^{-1}$ is a compact operator on $L^2_{\mathrm{rad}}(\Gc)$. Then, by Weyl's Theorem, $\sigma_\textup{ess}(\Lop_{+,\beta}) = \sigma_\textup{ess}(\Lop_{+,0}) = [1,+\infty)$. We similarly have $\sigma_\textup{ess}(\Lop_{-,\beta}) = [1,+\infty)$.

We check that 0 is never an eigenvalue of $\Lop_{+,\beta}$. Differentiating \eqref{eq:Qbeta}, there holds  
$-(Q_\beta')'' + Q_\beta' - 5 Q_\beta^4 Q'_\beta=0$ on each edge. Now let $u  \in \ker(\Lop_{+,\beta})$. We have $(u'Q_\beta' - Q_\beta'' u)' = 0$, so there exists $\eta \in \R$ such that 
\[
\Big(\frac u{Q_\beta'}\Big)' = \frac \eta {(Q_\beta')^2}.
\]
Since an antiderivative of $(Q_\beta')^{-2}$ grows faster than $(Q_\beta')^{-1}$, this implies that $\eta = 0$, so $u$ is proportional to $Q_\beta'$. 
However, we can show that $Q_\beta'$ is not in $D_\beta$. By \eqref{eq:Qbeta}, we have $Q''_\beta(0)=Q_\beta(0)-Q_\beta(0)^5$. Then, computing $Q_\beta'$ gives $Q_\beta'(x)^2=Q_\beta(x)^2-1/3Q_\beta(x)^6$ for all $x\in\mathcal G$. In particular, combined with the jump
 condition $NQ_\beta'(0)=\beta Q_\beta(0)$ at $0$, this gives $ Q(0)^4=3\left(1-\beta^2/N^2\right)$. Hence, the jump ratio at $0$ for $Q_\beta'$ is given by
\[
\frac {Q_\beta''(0)}{Q_\beta'(0)} = \frac {Q_\beta(0)}{Q_\beta'(0)} \big(1 -Q_\beta(0)^4 \big) = \frac N \beta \left(1-3 \left(1 - \frac {\beta^2}{N^2} \right) \right) = \frac {3\beta}N - \frac {2N}{\beta} \neq \frac N \beta.
\]
Therefore, $Q_\beta'\not\in D_\beta$.
Thus, $u = 0$ and $\ker(\Lop_{+,\beta}) = \{0\}$ for all $\beta\in (-N,N)$. Since 0 is not in the essential spectrum, it is in the resolvent set of $\Lop_{+,\beta}$.

It is known and can easily be verified that $\Lop_{+,0}$ has a unique eigenvalue, equal to $-8$, with the explicit eigenfunction $Q^3$ . Since $\Lop_{+,\beta}$ depends analytically on $\beta$ (family of type B in the sense of Kato~\cite{kato}), we get by regularity of the spectrum and semi-boundedness from below that $\Lop_{+,\beta}$ has a unique negative eigenvalue for all $\beta \in (-N,N)$.

For the last statement, we see from~\eqref{eq:Qbeta} that $Q_\beta \in \ker(\Lop_{-,\beta})$ for all $\beta \in (-N,N)$. As $Q_\beta>0$, $0$ is a simple eigenvalue, so $\ker(\Lop_{-,\beta}) = \Span(Q_\beta)$. 
Moreover, it is the first eigenvalue of $\Lop_{-,\beta}$, so the rest of the spectrum is positive.
\end{proof}

For $y \in \Gc$ we set 
\[
\Lambda_\beta Q_\beta(y)= \Lambda Q(y-\tau_\beta) + \left(\tau_\beta+ \frac{\beta N}{2(N^2-\beta^2)}\right)Q'(y-\tau_\beta).
\]

\begin{lemma} \label{lem:LambdaQ}
We have $\Lambda_\beta Q_\beta \in D_\beta$ and 
\begin{equation*}
    \Lop_{+,\beta} \Lambda_\beta Q_\beta= - 2 Q_\beta.
\end{equation*}
\end{lemma}

\begin{proof}
We adapt to our setting the proof of the case $\beta = 0$. For $\abs \omega < \frac N {\abs \beta}$ and $y \in \Gc$ we set 
\[
Q^\omega_\beta(y)= \sqrt\omega Q(\omega y-\tau^\omega_\beta),\quad \tau^\omega_\beta=\frac{1}{2}\tanh^{-1}\left(\frac{\beta}{N\omega}\right).
\]
Then $Q_\beta^\omega \in D_\beta$ and the function $\Lambda_\beta  Q_\beta$ has been defined so that 
\[
\Lambda_\beta  Q_\beta=\partial_\omega Q^\omega_\beta\big|_{\omega=1}.
\]
The fact that $\Lambda_\beta  Q_\beta$ belongs to the domain $D_\beta$ can be obtained either by direct calculations, or by observing that $Q_\beta^\omega$ is smooth in $(\beta,\omega)$ 
away from $\omega=0$, which allows one to differentiate with respect to $\omega$ the condition verified by $Q_\beta^\omega$ at the vertex, 
$N\partial_{y}Q_\beta^\omega(0^+)= \beta Q_\beta^\omega(0^+)$.

Finally, we have
\begin{equation}
    \label{eq:eq_for_omega}
H_\beta Q^\omega_\beta+\omega^2 Q^\omega_\beta-|Q^\omega_\beta|^4 Q^\omega_\beta=0,
\end{equation}
which gives the result after differentiation at $\omega = 1$.
\end{proof}

\begin{lemma} \label{lem:C1exp} 
Let $\beta \in (-N,N)$. Let $\eta \in \R$, $k \in \N$ and $g \in C^k_{\exp}$. Let $u \in H^1_{\mathrm{rad}}(\Gc)$ be a solution of 
\begin{equation} \label{eq:Lug}
\Lf_{\pm,\beta} u = g + \eta \delta.
\end{equation}
Then $u \in C^{k+2}_{\exp}$. Moreover, if $\eta = 0$ then $u$ also belongs to $D_\beta$. 
\end{lemma}

\begin{proof}
Assume that $u$ is solution of \eqref{eq:Lug}. Since the quadratic form 
\[
u \mapsto \norm{u'}_{L^2(\Gc)}^2 + \beta |u(0)|^2 + \norm u_{L^2(\Gc)}^2
\]
is coercive on $H^1_{\mathrm{rad}}(\Gc)$, $u$ is the unique function in $H^1_{\mathrm{rad}}(\Gc)$ which satisfies on each edge
\[
-u'' +  u = \tilde {  g}, \quad   \tilde {  g} =   g - \mu   Q_\beta^4   u,
\]
with $\mu=-1$ or $\mu=-5$, together with the vertex condition
\[
-N u'(0)  + \beta u(0) = \eta.
\]
Then $u$ is explicitely given by 
\[
u(y) = \frac \eta {N + \beta} e^{-y} + \frac 12 \int_0^\infty e^{-\abs{y-z}} \tilde g(z) \diff z + \frac 12 \frac {N-\beta}{N+\beta} e^{-y} \int_0^\infty e^{-z} \tilde g(z) \diff z.
\]
From this expression and the fact that $Q_\beta$ is smooth and exponentially decaying on each edge, we deduce all the conclusions of the lemma.
\end{proof}

Similarly to Section~\ref{sec:unpert_op}, it follows from Proposition~\ref{prop:spectre-Lpm} that
$L_{+,\beta}$ is invertible and $L_{-,\beta}$ is invertible on $Q_\beta^\perp$.
We will denote by $R_{-,\beta}$ the unique bounded operator on $L^2(\Gc)$ which maps $Q_\beta$ to 0 and any $\f \in Q_\beta^\perp = \rge(L_{-,\beta})$ to the unique $\psi \in Q_\beta^\perp = \ker(L_{-,\beta})^\perp$ such that $L_{-,\beta} \psi = \f$. In particular, $\rge(R_{-,\beta}) \subset D_\beta$.

\begin{lemma} \label{lem:inversion-Lbeta}
Let $\nu \in \N$. Let $\beta \mapsto u_\beta$ be a smooth map from $(-N,N)$ to $L^2_{\mathrm {rad}}(\Gc)$ such that $\partial_\beta^\ell u_\beta \in C^\nu_{\exp}$ for all $\ell \in \N$ and $\beta \in (-N,N)$. 
Then the maps $\beta \mapsto \Lop_{+,\beta}^{-1} u_\beta$ and $\beta \mapsto R_{-,\beta} u_\beta$ are smooth from $(-N,N)$ to $L^2_{\mathrm {rad}}(\Gc)$. Moreover, we have $\partial_\beta^\ell (L_{+,\beta}^{-1} u_\beta),\partial_\beta^\ell (R_{-,\beta} u_\beta) \in C^\nu_{\exp}$ for all $\beta \in (-N,N)$ and $\ell \in \N$.
\end{lemma}

We introduce some notation for the proof. For $\beta \in (-N,N)$, we set $H^1_{\beta,\perp} = H^1_{\mathrm {rad}} \cap Q_\beta^\perp$ (the orthogonal complement is still understood in the sense of $L^2(\Gc)$). We denote by $\mathcal I_\beta : H^1_{\beta,\perp} \to H^1_{\mathrm {rad}}$ the natural embedding, and we set $\Lf_{-,\beta}^\perp = \mathcal I_\beta^* \Lf_{-,\beta} \mathcal I_\beta : H^1_{\beta,\perp} \to (H^1_{\beta,\perp})^\star$. The corresponding operator given by the representation theorem is the restriction $L_{-,\beta}^\perp$ of $L_{-,\beta}$ to $Q_\beta^\perp$, with domain $D_\beta \cap Q_\beta^\perp$.

We set 
\[
\mathsf R_{-,\beta} = \mathcal I_\beta (\Lf_{-,\beta}^\perp)^{-1} \mathcal I_\beta^\star : (H^1_{\mathrm {rad}})^\star \to H^1_{\mathrm {rad}}.
\]
For $u \in L^2(\Gc)$ we have $\mathsf R_{-,\beta} u = R_{-,\beta} u$.

Finally, let $\Pi_\beta$ be the orthogonal projection of $L^2(\Gc)$ onto $\Span(Q_\beta)$,
and $\Pi_\beta^\perp = 1 - \Pi_\beta$. 

\begin{proof}
We begin with $R_{-,\beta}$. Let $\beta \in (-N,N)$. There exist $\eta_0 > 0$ and $C> 0$ such that for all $\eta \in [-\eta_0,\eta_0]$
\begin{equation} \label{eq:R-bounded}
\|R_{-,\beta+\eta}\|_{\mathcal L(L^2)} \leq C , \quad  \| \mathsf R_{-,\beta+\eta} \|_{\mathcal L((H^1_{\mathrm{rad}})^\star, H^1_{\mathrm{rad}})} \leq C.
\end{equation}
In \eqref{eq:R-bounded}, the first inequality follows from the strict positivity of the spectrum of $L_{-,\beta}^\perp$. The second inequality is a direct consequence of the first one (see the argument after \eqref{pos_rel0'} in the proof of Lemma \ref{positivity.lem}).
Let $u \in L^2(\Gc)$ and $\eta \in [-\eta_0,\eta_0]$. We have 
\begin{equation} \label{eq:R}
\begin{aligned}
(R_{-,\beta+\eta} - R_{-,\beta}) u 
& = (R_{-,\beta+\eta} - R_{-,\beta}) \Pi_\beta u \\
& + \Pi_{\beta+\eta} (R_{-,\beta+\eta} - R_{-,\beta}) \Pi_\beta^\perp u \\
& + \Pi_{\beta+\eta}^\perp (R_{-,\beta+\eta} - R_{-,\beta}) \Pi_\beta^\perp u.
\end{aligned}
\end{equation}
For the first two terms, we observe that
\begin{eqnarray} \label{eq:R1}
\lefteqn{(R_{-,\beta+\eta} - R_{-,\beta}) \Pi_\beta u + \Pi_{\beta+\eta} (R_{-,\beta+\eta} - R_{-,\beta}) \Pi_\beta^\perp u} \\
\nonumber && = R_{-,\beta+\eta} \Pi_\beta u - \Pi_{\beta+\eta} R_{-,\beta} \Pi_\beta^\perp u \\
\nonumber && = R_{-,\beta+\eta} (\Pi_\beta- \Pi_{\beta+\eta}) u - (\Pi_{\beta+\eta}-\Pi_\beta) R_{-,\beta}  u.
\end{eqnarray}
For the third term in the right-hand side of \eqref{eq:R}, we have
\begin{equation} \label{eq:R2}
\begin{aligned}
\Pi_{\beta+\eta}^\perp (R_{-,\beta+\eta} - R_{-,\beta}) \Pi_\beta^\perp u
& = \Pi_{\beta+\eta}^\perp (\mathsf R_{-,\beta+\eta} - \mathsf R_{-,\beta}) \Pi_\beta^\perp u \\
& = \mathsf R_{-,\beta+\eta} \Lf_{-,\beta+\eta} (\mathsf R_{-,\beta+\eta} - \mathsf R_{-,\beta}) \Lf_{-,\beta} \mathsf R_{-,\beta} u \\
& = - \mathsf R_{-,\beta+\eta} (\Lf_{-,\beta+\eta} - \Lf_{-,\beta}) \mathsf R_{-,\beta} u \\
& = - \eta \mathsf R_{-,\beta+\eta} \delta \mathsf R_{-,\beta} u.
\end{aligned}
\end{equation}
Since the map $\beta \mapsto \Pi_{\beta}$ is smooth, we first deduce with \eqref{eq:R-bounded}-\eqref{eq:R2} that the map $\beta \mapsto R_{-,\beta}$ is continuous. Then, after dividing these equalities by $\eta$, we deduce that it is also differentiable with 
\begin{equation} \label{eq:der-R-}
\partial_\beta R_{-,\beta} = - R_{-\beta} \partial_\beta \Pi_\beta - \partial_\beta \Pi_\beta R_{-\beta} - \mathsf R_{-,\beta} \delta \mathsf R_{-,\beta}.
\end{equation}
By iteration, we prove that the map $\beta \mapsto R_{-,\beta} u_\beta$ is smooth on $(-N,N)$.

Since $\rge(\Pi_\beta) = \Span(Q_\beta)$, then $\Pi_\beta$ and all its derivatives with respect to $\beta$ leave $C^\nu_{\exp}$ invariant.

Moreover, by Lemma \ref{lem:C1exp}, the operators $R_{-,\beta}$ and $\mathsf R_{-,\beta} \delta \mathsf R_{-,\beta}$ map $C^\nu_{\exp}$ into itself. By iterating \eqref{eq:der-R-}, we deduce that all the derivatives of $R_{-,\beta}$ with respect to $\beta$ leave $C^\nu_{\exp}$ invariant.

This concludes the proof for $R_{-,\beta}$. For the usual resolvent $L_{+,\beta}^{-1}$, we replace $\Pi_\beta$ by 0 and we recover basic properties for a resolvent. In this case, the derivatives are simply given by 
\[
\partial_\beta^\ell (L_{+,\beta}^{-1}) = \Lf_{+,\beta}^{-1} \big( \delta \Lf_{+,\beta}^{-1} \big)^{\ell},
\]
and we get as before that they leave $C^\nu_{\exp}$ invariant.
\end{proof}

\section{Construction of the profile} \label{sec:profile}

\newcommand{\Pol}{\mathrm{Pol}_\kappa(b,\lambda)}

In this section, we prove Proposition~\ref{prop:profile}. The terms $P_{j,k,\beta}$ and $\alpha_{j,k,\beta}$ will be constructed by induction on $(j,k) \in \Theta_\kappa$ according to the following order. For $(j_1,k_1),(j_2,k_2) \in \mathbb Z^2$ we say that $(j_1,k_1) < (j_2,k_2)$ if either $k_1 < k_2$ or $k_1 = k_2$ and $j_1< j_2$.

Let $\lambda_0 = \frac {N}{\abs{\gamma}}$. Given an interval $J$ of $\R$, $b \in C^1(J,\R)$, $\lambda \in C^1(J,(0,\lambda_0))$ and $\alpha \in C^1(J,\R)$, we denote by $\OC$ any family  
$(  u_\beta(s))_{\beta \in (-N,N)}\subset L^2(\Gc)$ satisfying the following property:
for any $\nu \in \N$ there exist $\rho > 0$ and $C > 0$ such that, for all $s \in J$ there holds $ {u}_{\gamma\lambda(s)}(s) \in C^\nu_{\exp}$ and
\begin{equation} \label{def:Obla}
\norm{  u_{\gamma\lambda(s)}}_{C^\nu_{\exp,\rho}} \leq C \lambda \left(\left|b+ \frac{\lambda_s}{\lambda}\right|+\left|b_s+b^2-\alpha\right| \right)
+ C(b^2+\lambda)^{\kappa}.
\end{equation}

With this notation, the main estimate~\eqref{remainder_est} of Proposition~\ref{prop:profile} reads
\begin{equation} \label{remainder_est-2}
\Psi_\kappa = \OC.
\end{equation}

\begin{lemma}
Let $(  P_{j,k,\beta})$ and $(\alpha_{j,k,\beta})$ be as in Proposition~\ref{prop:profile}~(i)~and~(ii). Let $P$ and $\alpha$ be defined by~\eqref{def:P} and~\eqref{def:alpha}. Let $b \in C^1(J,\R)$, $\lambda \in C^1(J,(0,\lambda_0))$ and $\alpha \in C^1(J,\R)$, and let $\Psi_\kappa$ be defined by~\eqref{def:Psi-K}. Then we can write 
\begin{equation} \label{eq:Psi-OC}
\Psi_\kappa = \sum_{(j,k) \in \Theta_\kappa} 
(ib)^j \lambda^k\Psi_{j,k,\gamma \lambda} + \OC,
\end{equation}
for $\Psi_{j,k,\beta}$, $(j,k,\beta) \in \Theta_\kappa \times (-N,N)$ as follows. Given $(m,k) \in \N \times \N^*$ with $m+k <  \kappa$, there exist $\tilde \Psi_{2m,k,\beta }$ and $\tilde \Psi_{2m+1,k,\beta}$ in $L^2(\Gc)$
\begin{enumerate}[label={\upshape(\roman*)}]
    \item with all derivatives with respect to $\beta$ in $C^\infty_{\exp}$,
    \item which depend explicitly on the $P_{j',k',\beta }$ and $\alpha_{j',k',\beta }$ for $(j',k') < (2m,k)$
\end{enumerate}
such that
\begin{align}
\label{eq:Psi-2m}
\Psi_{2m,k,\beta } & = -\Lop_{+,\beta } P_{2m,k,\beta } + \alpha_{2m,k,\beta } \frac {y^2}{4} Q_{\beta }  + \tilde \Psi_{2m,k,\beta },\\
\label{eq:Psi-2m1}
\Psi_{2m+1,k,\beta } & = -\Lop_{-,\beta } P_{2m+1,k,\beta } - (2m + k) P_{2m,k,\beta } + \tilde \Psi_{2m+1,k,\beta }.
\end{align}
In particular we have 
\begin{equation} \label{eq:tilde-Psi11}
\tilde \Psi_{0,1,\beta} = 0, \quad \tilde \Psi_{1,1,\beta} = - \gamma \partial_\beta Q_{\beta } =  \frac {N\gamma} {2(N^2-\beta^2)} Q_\beta'.
\end{equation}
\end{lemma}

\begin{proof}
Up to a rest of size $\OC$, 
we can always replace the derivatives $\lambda_s$ and $b_s$ by $(-\lambda b)$ and $(ib)^2 + \alpha$, respectively. For $(j,k) \in \Theta_\kappa$, we have 
\begin{eqnarray*}%
\lefteqn{i \partial_s \big( (ib)^j \lambda^k   P_{j,k,\gamma \lambda } \big)= -j (ib)^{j-1} b_s \lambda^k   P_{j,k,\gamma \lambda } + i (ib)^j k \lambda^{k-1} \lambda_s   P_{j,k,\gamma \lambda } + i\gamma (ib)^j \lambda^k \lambda_s \partial_\beta P_{j,k,\gamma \lambda} }\notag\\
&& \notag \\
&& = - j (ib)^{j+1}  \lambda^k   P_{j,k,\gamma \lambda } -j (ib)^{j-1} \alpha \lambda^k   P_{j,k,\gamma \lambda } - (ib)^{j+1} k \lambda^{k}    P_{j,k,\gamma \lambda} - \gamma (ib)^{j+1} \lambda^{k+1} \partial_\beta P_{j,k,\gamma \lambda}\notag\\ 
&& \qquad + \OC\notag\\
&& = - (j+k) (ib)^{j+1}  \lambda^k   P_{j,k,\gamma \lambda }  - \sum_{\substack{(p,q) \in \Theta_\kappa \\ p \text{ even}}} j \alpha_{p,q,\gamma \lambda} (ib)^{j-1+ p} \lambda^{k+q}   P_{j,k,\gamma \lambda } - \gamma (ib)^{j+1} \lambda^{k+1} \partial_\beta P_{j,k,\gamma \lambda }\notag\\
&& \qquad  + \OC.
\end{eqnarray*}
Notice that some terms in the sum satisfy~\eqref{def:Obla} and could be put in the rest. 
On the other hand, there exists a family $(\Phi_{j,k,\beta})$ with $\Phi_{j,k,\beta} \in L^2(\Gc)$ 
which depends only on $Q_{\beta}$ and the $P_{j,k,\beta}$, such that
\begin{equation}\label{eq:expansion_of_f}
 \abs{P}^4 P = Q_{\gamma \lambda }^5 + \sum_{(j,k) \in \Theta_\kappa} (ib)^j \lambda^k \Phi_{j,k,\gamma \lambda } + \OC.   
\end{equation}
In particular, $\Phi_{0,1,\beta} = 5 Q_{\beta}^4 P_{0,1,\beta}$.
More generally, we have 
\[
\Phi_{j,k,\beta } = \big( 3 + 2 (-1)^j \big) Q_{\beta }^4 P_{j,k,\beta }  + \tilde \Phi_{j,k,\beta },
\]
for some $\tilde \Phi_{j,k,\beta} \in L^2(\Gc)$ whose derivatives with respect to $\beta$ are in $C^\infty_{\exp}$ and which only depends on the $P_{j',k',\beta }$ with $(j',k') <  (j,k)$. In particular, $\tilde \Phi_{0,1,\beta} = \tilde \Phi_{1,1,\beta} = 0$.

We set $\Theta_\kappa^0 = \Theta_\kappa \cup \{(0,0)\}$ and $P_{0,0,\gamma \lambda } = Q_{\gamma \lambda }$. We also set $P_{j,0,\gamma \lambda } = 0$ for $j \in \N^*$.
Then we have~\eqref{eq:Psi-OC} where, for $(j,k,\beta) \in \Theta_\kappa \times (-N,N)$, 
\begin{equation*} %
\begin{aligned}
\Psi_{j,k,\beta } 
& = -(j-1+k) P_{j-1,k,\beta } + \sum_{\substack{(p,q) \in \Theta_\kappa \\ p \text{ even}}} (j+1-p) \alpha_{p,q} P_{j+1-p,k-q,\beta } - \gamma \partial_\beta P_{j-1,k-1,\beta }\\
& + H_{\beta }P_{j,k,\beta } - P_{j,k} + \big( 3 + 2 (-1)^j \big) Q_{\beta }^4 P_{j,k,\beta }  + \tilde \Phi_{j,k,\beta } + \sum_{\substack{(p_1,q_1) \in \Theta_\kappa , (p_2,q_2)\in \Theta_\kappa^0 \\ p_1+p_2 = j, q_1+q_2 = k \\ p_1 \text{ even}}} \alpha_{p_1,q_1,\beta } \frac {y^2}{4} P_{p_2,q_2,\beta }.
\end{aligned}
\end{equation*}
It can be checked that this is indeed of the form~\eqref{eq:Psi-2m}-\eqref{eq:Psi-2m1}, and that \eqref{eq:tilde-Psi11} holds when $m = 0$.
\end{proof}

We now show that $(P_{j,k,\beta})$ and $(\alpha_{j,k,\beta})$ can be defined so that \eqref{remainder_est} holds. We construct $P_{j,k,\beta}$ and $\alpha_{j,k,\beta}$ by induction, so that the whole sum in~\eqref{eq:Psi-OC} vanishes, which implies \eqref{remainder_est-2}. To this aim, we shall use the expressions~\eqref{eq:Psi-2m}-\eqref{eq:Psi-2m1} and the properties of the operators $\Lf_{\pm,\beta}$. In particular, we will obtain the explicit 
formula for $\alpha_{0,1,0}$, as stated in part~(iii).

\begin{proof}[Proof of~(iii) and~\eqref{remainder_est}]
Let $(m,k) \in \N \times \N^*$ with $m+k <  \kappa$. Assume that we have defined $\alpha_{j',k',\beta }$ (if $j'$ is even) and $P_{j',k',\beta }$ for all $(j',k') \in \Theta_\kappa$ with $(j',k') < (2m,k)$.

By~\eqref{eq:Psi-2m} and Proposition~\ref{prop:spectre-Lpm}, for any choice of $\alpha_{2m,k,\beta } \in \R$ there exists $P_{2m,k,\beta } \in D_{\beta}$ such that $\Psi_{2m,k,\beta} = 0$. Moreover $\partial_\beta^\ell P_{2m,k,\beta} \in C^\infty_{\exp}$ for all $\ell \in \N$ by Lemma~\ref{lem:inversion-Lbeta}.

We choose $\alpha_{2m,k,\beta }$ so that (see~\eqref{eq:Psi-2m1} and Proposition~\ref{prop:spectre-Lpm} again)
\begin{equation} \label{eq:P2mk}
(2m+k) P_{2m,k,\beta } - \tilde \Psi _{2m+1,k,\beta } \in \mathrm{Ran}(\Lf_{-,\beta }) = 
\Span(Q_{\beta })^\bot.
\end{equation}
By Lemma~\ref{lem:LambdaQ} and the selfadjointness of $\Lf_{+,\beta }$, this condition reads
\begin{equation*}
\frac{2m+k}2 \scalar{\Lf_{+,\beta } P_{2m,k,\beta }}{\Lambda_{\beta } Q_{\beta }} + \scalar{ \tilde \Psi _{2m+1,k,\beta }}{ Q_{\beta }} = 0,
\end{equation*}
which gives
\begin{align*}
\alpha_{2m,k,\beta } 
 = - \frac {4} {\scalar{y^2 Q_{\beta }}{ \Lambda_{\beta } Q_{\beta }}} 
 \left(\frac 2 {2m+k}  \scalar{ \tilde \Psi_{2m+1,k,\beta}} {Q_{\beta }} 
 + \scalar{\tilde \Psi_{2m,k,\beta }}{\Lambda_{\beta } Q_{\beta }}  \right) . 
\end{align*}
We can then choose $P_{2m+1,k,\beta } \in D_\beta \cap \Span(Q_{\beta })^\bot$ such that $\Psi_{2m+1,k,\beta } = 0$. And $\partial_\beta^\ell P_{2m+1,k,\beta} \in C^\infty_{\exp}$ for all $\ell \in \N$ by Lemma~\ref{lem:inversion-Lbeta}. We have thus constructed $(P_{j,k,\beta})$ and $(\alpha_{j,k,\beta})$ 
so that~\eqref{remainder_est} holds.

We finally check (iii). For $m = 0$ and $k=1$, \eqref{eq:tilde-Psi11} yields
\[
\alpha_{0,1,\beta } =  -\frac {4N \gamma \scalar{ Q_{\beta }'}   {Q_{\beta}}} {(N^2-\beta^2)\scalar{y^2 Q_{\beta }}{ \Lambda_{\beta } Q_{\beta }}}.
\]
Hence, by Lemma~\ref{intid.lem},
\[
\alpha_{0,1,0} =  
-\frac {4\gamma \scalar{ Q'} {Q}} {N \scalar{y^2 Q}{ \Lambda Q}} =   
- 2 \gamma \frac {Q(0)^2}{\|yQ\|_{L^2(\Gc)}^2}=\alpha_\star,
\]
as expected. 
\end{proof}

\begin{proof}[Proof of~\eqref{deriv_energy} and~\eqref{energy_nls_dynsys}-\eqref{asympt_nrj}]
We start by proving~\eqref{deriv_energy}. Recall that the energy $E$ is defined as
\begin{align*}
E(u)
&=\frac12\int_{\Gc}|u_x|^2\diff x+\frac{\gamma}{2}|u(0)|^2-\frac16\int_{\Gc}|u|^6\diff x,
\quad u\in H^1_D(\Gc).
\end{align*}
Letting 
\[
u(x)=\lambda^{-1/2}v(\lambda^{-1}x), \quad \lambda>0,
\]
we have that $u\in D(H_\gamma) \Leftrightarrow v\in D(H_{\gamma\lambda})$ and 
the change of variables $y=\lambda^{-1}x$ yields
\begin{align}\label{eq:ch_var_energy}
E(u)
&=\frac{1}{\lambda^2}\left[\frac{1}{2}\int_{\Gc}|v_y(y)|^2\diff y+\frac{\gamma\lambda}{2}|v(0)|^2
-\frac{1}{6}\int_{\Gc}|v(y)|^6\diff y \right] =: \frac{1}{\lambda^2}\wt E(\lambda,v).
\end{align}
Note that
$\wt E$ is Fr\'echet differentiable with respect to its second variable 
at any $v \in D_{\gamma \lambda}$, with derivative 
\begin{equation}\label{eq:id_E_H}
D_p \wt E(\lambda,v) = -H_{\gamma\lambda}v-f(v).
\end{equation}

We now consider 
\begin{equation}\label{eq:ch_var_small_p}
p:=e^{-ib\frac{y^2}{4}} P \in D(H_{\gamma\lambda}).
\end{equation}
Then
$$
\tilde P = \lambda^{-1/2}e^{i(\theta-b\frac{y^2}{4})}P
=\lambda^{-1/2}e^{i\theta}p
$$
and we deduce from~\eqref{eq:ch_var_energy} (with $v=e^{i\theta}p$) that
\begin{equation}\label{eq:lambda^2_energy}
E(\tilde P)=\frac{1}{\lambda^2}\wt E(\lambda,p).
\end{equation}
Hence,
\begin{align*}
\frac{\dif}{\dif s}E(\tilde P(s))
&=\Big(-2\frac{\lambda_s}{\lambda^3}\Big)\wt E(\lambda,p)
+\frac{1}{\lambda^2}\Big[D_\lambda \wt E(\lambda,p)\lambda_s 
+ ( D_{p}\wt E(\lambda,p),p_s)\Big] \\
&=\frac{1}{\lambda^2}\Big[-2\frac{\lambda_s}{\lambda}\wt E(\lambda,p)
+\frac{\gamma\lambda}{2}\frac{\lambda_s}{\lambda}|p(0)|^2
+( D_{p}\wt E(\lambda,p),p_s)\Big].
\end{align*}
To compute the last term, we shall use the equation satisfied by $p$, 
which we derive from~\eqref{def:Psi-K}, using~\eqref{eq:id_E_H}:
\begin{align*}
ip_s
&=-H_{\gamma\lambda}p+p-f(p)+i\frac{\lambda_s}{\lambda}\Lambda p
-i\Big(\frac{\lambda_s}{\lambda}+b\Big)\Lambda p+(b_s+b^2-\alpha)\frac{y^2}{4}p
+e^{-ib\frac{y^2}{4}}\Psi_\kappa \\
&=D_p \wt E(\lambda,p)+p+i\frac{\lambda_s}{\lambda}\Lambda p
-i\Big(\frac{\lambda_s}{\lambda}+b\Big)\Lambda p+(b_s+b^2-\alpha)\frac{y^2}{4}p
+e^{-ib\frac{y^2}{4}}\Psi_\kappa.
\end{align*}
Since 
$( D_p \wt E(\lambda,p), iD_p\wt E(\lambda,p))=( D_{p}\wt E(\lambda,p),ip )=0$, 
it follows that
\begin{align*}
( D_{p}\wt E(\lambda,p),p_s )
=& \, \frac{\lambda_s}{\lambda}( D_{p}\wt E(\lambda,p),\Lambda p ) \\
& - \Big(\frac{\lambda_s}{\lambda}+b\Big)( D_{p}\wt E(\lambda,p),\Lambda p ) 
-\Big( D_{p}\wt E(\lambda,p),i(b_s+b^2-\alpha)\frac{y^2}{4}p \Big) \\
& - \Big( D_{p}\wt E(\lambda,p),ie^{-ib\frac{y^2}{4}}\Psi_\kappa \Big).
\end{align*}
Hence,
\begin{align}\label{eq:1_large_term}
\frac{\dif}{\dif s}E(\tilde P(s))
=& \,\frac{1}{\lambda^2}
\bigg[
\frac{\lambda_s}{\lambda}\Big(-2\wt E(\lambda,p)
+\frac{\gamma\lambda}{2}|p(0)|^2+( D_{p}\wt E(\lambda,p),\Lambda p ) \Big) \notag \\
&- \Big(\frac{\lambda_s}{\lambda}+b\Big)( D_{p}\wt E(\lambda,p),\Lambda p ) 
-\Big( D_{p}\wt E(\lambda,p),i(b_s+b^2-\alpha)\frac{y^2}{4}p \Big) \nonumber \\
& - \Big( D_{p}\wt E(\lambda,p),ie^{-ib\frac{y^2}{4}}\Psi_\kappa \Big) 
\bigg].
\end{align}
To conclude the proof of~\eqref{deriv_energy}, we will show that 
\begin{equation}\label{eq:3_terms_cancelout}
-2\wt E(\lambda,p)
+\frac{\gamma\lambda}{2}|p(0)|^2+( D_{p}\wt E(\lambda,p),\Lambda p ) = 0.
\end{equation} 
Estimate~\eqref{deriv_energy} is then a direct consequence of~\eqref{remainder_est}.
Using again~\eqref{eq:id_E_H},
the left-hand side of~\eqref{eq:3_terms_cancelout} reads
\begin{equation}\label{eq:to_cancel}
-\intg|p_y|^2\diff y - \gamma\lambda|p(0)|^2 + \frac13 \intg|p|^6\diff y
+\frac{\gamma\lambda}{2} |p(0)|^2
-(H_{\gamma\lambda}p,\Lambda p)
-\re\intg|p|^4p\overline{\Lambda p} \diff y. 
\end{equation}
An integration by parts shows that
\begin{equation*}
\re\intg p_{yy}y\overline{p_y}\diff y=-\frac12 \intg|p_y|^2\diff y,
\end{equation*}
so that
\begin{align}\label{eq:cancel_1}
( H_{\gamma\lambda}p,\Lambda p)
=\Big( H_{\gamma\lambda}p,\frac12 p + yp_y\Big) 
&=\frac12\intg|p_y|^2\diff y+\frac{\gamma\lambda}{2}|p(0)|^2 
-\re\intg p_{yy}y\overline{p_y}\diff y \notag \\
&=\intg|p_y|^2\diff y+\frac{\gamma\lambda}{2}|p(0)|^2.
\end{align}
Another integration by parts yields
\begin{equation*}
\re\intg|p|^4py\overline{p_y} \diff y = -\frac16\intg|p|^6\diff y
\end{equation*}
and it follows that
\begin{equation}\label{eq:cancel_2}
\re\intg|p|^4p\overline{\Lambda p} \diff y 
=\frac12 \intg|p|^6\diff y + \re\intg|p|^4py\overline{p_y} \diff y
= \frac13 \intg|p|^6\diff y.
\end{equation}
\eqref{eq:cancel_1} and~\eqref{eq:cancel_2} show that all terms
in~\eqref{eq:to_cancel} cancel out, which completes the proof of~\eqref{deriv_energy}.

We next prove~\eqref{energy_nls_dynsys}-\eqref{asympt_nrj}. First, 
by~\eqref{eq:ch_var_small_p} and~\eqref{eq:lambda^2_energy},
\begin{align*}
\lambda^2 E(\tilde P) &=\wt E\big(\lambda,e^{-ib\frac{y^2}{4}}P\big) \\
&=\frac12\intg \Big( b^2\frac{y^2}{4}|P|^2+|P_y|^2+\re(ibyP_y\bar{P})\Big)\diff y
+\frac{\gamma\lambda}{2}|P(0)|^2 - \frac16\intg |P|^6\diff y \\
&=\frac12\intg|P_y|^2\diff y + \frac{b^2}{8}\intg y^2|P|^2\diff y 
-\frac{b}{2}\im \intg yP_y\bar{P}\diff y
+\frac{\gamma\lambda}{2}|P(0)|^2 - \frac16\intg |P|^6\diff y.
\end{align*}
We shall now plug in 
\[
P=Q_{\gamma \lambda}+\lambda Z,
\]
where, according to~\eqref{def:P},
\[
Z = \sum_{(j,k) \in \Theta_\kappa} (ib)^j \lambda^{k-1}  P_{j,k,\gamma \lambda }.
\]
We find that
\begin{align*}
\wt E\big(\lambda,e^{-ib\frac{y^2}{4}}P\big)
=&\,\frac12\intg|\partial_yQ_{\gamma\lambda}|^2\diff y+\lambda\intg\partial_yQ_{\gamma\lambda}\re\big(\bar Z_y\big)\diff y
+\frac{\lambda^2}{2}\intg|Z_y|^2\diff y \\
& +\frac{b^2}{8}\intg y^2 Q_{\gamma\lambda}^2\diff y + \frac{b^2}{4}\lambda\intg y^2 Q_{\gamma\lambda}\re\big(\bar{Z}\big)
+\frac{b^2}{8}\lambda^2\intg y^2|Z|^2\diff y \\
& -\frac{b}{2}\lambda\im \intg y\big(Z_yQ_{\gamma\lambda}+\partial_yQ_{\gamma\lambda}\bar{Z}\big)\diff y
-\frac{b}{2}\lambda^2 \im\intg y Z_y\bar{Z}\diff y \\
& +\frac{\gamma\lambda}{2}Q_{\gamma\lambda}(0)^2 + \gamma\lambda^2 Q_{\gamma\lambda}(0)\re\big(\bar{Z}(0)\big) 
+\frac{\gamma\lambda^3}{2}|Z(0)|^2 -  \frac16\intg |Q_{\gamma \lambda}+\lambda Z|^6\diff y.
\end{align*}
Since $Q_{\gamma \lambda}$ satisfies 
\begin{equation}\label{eq:ground_state_eq}
H_{\gamma\lambda}Q_{\gamma\lambda}+Q_{\gamma\lambda}-Q_{\gamma\lambda}^5=0,
\end{equation}
it is a critical point of the functional
\[
A(v):=
\frac{1}{2}\int_{\Gc}|v_y(y)|^2\diff y + \frac{\gamma\lambda}{2}|v(0)|^2 + \frac{1}{2}\int_{\Gc}|v(y)|^2\diff y
-\frac{1}{6}\int_{\Gc}|v(y)|^6\diff y, \quad v\in H^1_D(\Gc).
\]
Letting $Q_{\gamma \lambda}^{\mu}(y)=\mu^{1/2}Q_{\gamma \lambda}(\mu y), \ \mu>0$, 
we deduce that
\[
\frac{\dif}{\dif\mu}\Big\vert_{\mu=1}A(Q_{\gamma \lambda}^\mu)
=A'(Q_{\gamma \lambda})\frac{\dif Q_{\gamma \lambda}^\mu}{\dif\mu}\Big\vert_{\mu=1}=0.
\]
An explicit computation of the left-hand side of this identity yields
\begin{equation}\label{eq:nehari+pohozaev}
\frac12\intg|\partial_yQ_{\gamma\lambda}|^2\diff y+\frac{\gamma\lambda}{4}Q_{\gamma\lambda}(0)^2
- \frac16\intg Q_{\gamma\lambda}^6\diff y = 0.
\end{equation}
On the other hand, by~\eqref{eq:ground_state_eq},
\begin{align}\label{eq:transf_2terms}
\intg \partial_y Q_{\gamma\lambda}\re\big(\bar Z_y \big)\diff y 
+ \gamma\lambda Q_{\gamma\lambda}(0)\re\big(\bar Z(0) \big)
&=(H_{\gamma\lambda}Q_{\gamma\lambda},Z) \notag \\
&=(-Q_{\gamma\lambda}+Q_{\gamma\lambda}^5,Z) \notag \\
&=-(Q_{\gamma\lambda},Z) + \intg Q_{\gamma\lambda}^5\re\big(\bar Z\big) \diff y.
\end{align}
Using~\eqref{eq:nehari+pohozaev} and~\eqref{eq:transf_2terms}, the energy becomes
\begin{align}\label{eq:expansion_E_tilde}
\wt E\big(\lambda,e^{-ib\frac{y^2}{4}}P\big)
=&\,\frac{\gamma\lambda}{4}Q_{\gamma\lambda}(0)^2 
+ \frac{b^2}{8}\intg y^2 Q_{\gamma\lambda}^2\diff y -\lambda (Q_{\gamma\lambda},Z) \notag \\
&-\frac16\intg\Big(|Q_{\gamma \lambda}+\lambda Z|^6-Q_{\gamma\lambda}^6-6Q_{\gamma\lambda}^5\re\big(\lambda \bar Z\big)\Big)\diff y \notag \\
&-\frac{b}{2}\lambda\im \intg y\big(Z_yQ_{\gamma\lambda}+\partial_yQ_{\gamma\lambda}\bar{Z}\big)\diff y
-\frac{b}{2}\lambda^2 \im\intg y Z_y\bar{Z}\diff y \notag \\
&+\frac{\lambda^2}{2}\intg|Z_y|^2\diff y + \frac{b^2}{4}\lambda\intg y^2 Q_{\gamma\lambda}\re\big(\bar{Z}\big)\diff y
+\frac{b^2}{8}\lambda^2\intg y^2|Z|^2\diff y  +\frac{\gamma\lambda^3}{2}|Z(0)|^2 .
\end{align}
The first line of the above right-hand side is of order $\lambda\sim b^2$, while
the next lines are of higher orders. In the rest of this proof we write $\Pol$ for any function of $b$ and $\lambda$ of the form 
\[
\sum_{\substack{(j,k) \in \Theta_\kappa\\ j \text{ even}, \ j/2+k\ge2}}
\eta_{j,k} b^j\lambda^{k}, \quad (\eta_{j,k}) \subset \R.
\]
In particular \eqref{energy_nls_dynsys} reads
\begin{equation} \label{energy_nls_dynsys_2}
E(\tilde P(b,\lambda,\theta)) = C_Q \mathcal E_{\mathrm{mo}}(b,\lambda) + \frac 1 {\lambda^2} \big( \Pol + O((b^2 + \lambda)^\kappa) \big).
\end{equation}

Using~\eqref{eq:def_of_Q_beta} and the definition of $\alpha_\star$ (see \eqref{def:alpha-01}), Taylor expansions about $\gamma\lambda=0$ yield
\begin{equation} \label{eq:gamma-lambda-Q0}
\begin{aligned}
\frac {\gamma \lambda}4 Q_{\gamma\lambda}(0)^2
& = \frac {\gamma \lambda}4Q(0)^2+   \Pol + O((b^2 + \lambda)^\kappa)\\
& =  -\alpha_\star C_Q \lambda +   \Pol + O((b^2 + \lambda)^\kappa).
\end{aligned}
\end{equation}
and
\begin{equation} \label{eq:b2-CQ}
\begin{aligned}
\frac {b^2} 8 \intg y^2 Q_{\gamma\lambda}^2(y)\diff y
& = \frac {b^2} 8 \intg y^2 Q^2(y)\diff y +  \Pol + O((b^2 + \lambda)^\kappa) \\
& = b^2 C_Q+  \Pol + O((b^2 + \lambda)^\kappa).
\end{aligned}
\end{equation}
On the other hand, since $L_+ \rho = y^2 Q$ (see Lemma \ref{nondeg.lem}), 
we deduce from \eqref{eq:Psi-2m} that 
\[
P_{0,1,0}= \frac{\alpha_{0,1,0}}{4}\rho = \frac{\alpha_\star}{4}\rho.
\]
Hence, by Lemma~\ref{intid.lem} and Lemma~\ref{lem:inversion-Lbeta},
\[
(Q,P_{0,1,0}) =  \frac{\alpha_\star}{4}(Q,\rho) = \alpha_\star\frac{\int_\Gc y^2 Q(y)^2 \diff y}{8}.
\]
This gives
\begin{equation} \label{eq:innp-Q-P}
-\lambda(Q_{\gamma\lambda},P_{0,1,\gamma \lambda})   
=  - \lambda \alpha_\star C_Q +  \Pol + O((b^2 + \lambda)^\kappa).
\end{equation}
With \eqref{eq:gamma-lambda-Q0}-\eqref{eq:innp-Q-P} it follows that
\[
\frac{1}{\lambda^2}\Big( \frac{\gamma\lambda}{4}Q_{\gamma\lambda}(0)^2 
+ \frac{b^2}{8}\intg y^2 Q_{\gamma\lambda}^2\diff y 
-\lambda(Q_{\gamma\lambda},P_{0,1,\gamma \lambda})\Big) = C_Q \mathcal E_{\mathrm{op}} + \frac 1 {\lambda^2} \big(  \Pol + O((b^2 + \lambda)^\kappa) \big).
\]
Finally, upon close inspection of the other terms in \eqref{eq:expansion_E_tilde}, using
the definition of $Z$, we have
\begin{align*}
&-\frac16\intg\Big(|Q_{\gamma \lambda}+\lambda Z|^6-Q_{\gamma\lambda}^6-6Q_{\gamma\lambda}^5\re\big(\lambda \bar Z\big)\Big)\diff y \notag \\
&-\frac{b}{2}\lambda\im \intg y\big(Z_yQ_{\gamma\lambda}+\partial_yQ_{\gamma\lambda}\bar{Z}\big)\diff y
-\frac{b}{2}\lambda^2 \im\intg y Z_y\bar{Z}\diff y \notag \\
&+\frac{\lambda^2}{2}\intg|Z_y|^2\diff y + \frac{b^2}{4}\lambda\intg y^2 Q_{\gamma\lambda}\re\big(\bar{Z}\big)\diff y
+\frac{b^2}{8}\lambda^2\intg y^2|Z|^2\diff y  +\frac{\gamma\lambda^3}{2}|Z(0)|^2 \\
& =  \Pol + O((b^2 + \lambda)^\kappa).
\end{align*}
This gives \eqref{energy_nls_dynsys_2} and concludes the proof.
\end{proof}

\begin{corollary} \label{cor:P-Q}
Let $k \in \N$. There exists $C > 0$ such that for $\lambda > 0$ with $\gamma \lambda \in (-N,N)$ we have 
\[
\norm{Q_{\gamma\lambda}-Q}_{\Sigma^k} \leq C \lambda, \quad
\norm{P_{b,\lambda} - Q}_{\Sigma^k} \leq C \lambda.
\]
\end{corollary}

\begin{proof}
By construction of the profile we have 
\[
\norm{P_{b,\lambda} - Q_{\lambda \gamma}}_{\Sigma^k} \lesssim \lambda.
\]
On the other hand, we see from~\eqref{eq:der-Q-beta} that 
\[
\norm{Q_{\beta} - Q}_{\Sigma^k} \lesssim \beta,
\]
and the conclusion follows.
\end{proof}

\section{Modulation} \label{sec:modulation}

In this section we prove Proposition~\ref{prop:modulation}. 
We set
\[
\Omega = \R%
\times \R \times (0,{N}/{|\gamma|}).
\]
Then for $\pi = (\theta,b,\lambda) \in  \Omega$  and $w =(w_j)_{j=1,\dots,N} \in L^2(\Gc)$  we define $\Theta_\pi w \in \mathcal L (L^2(\Gc))$ by 
\[
(\Theta_\pi w)_j(x) 
= \frac 1 {\sqrt \lambda} e^{i\theta} e^{-\frac {ibx^2}{4\lambda^2}} w_j\left( \frac x \lambda \right),
\quad j=1,\dots,N.
\]
This defines a unitary operator $\Theta_\pi$ on $L^2(\Gc)$.

For $\varepsilon > 0$ we also set 
\[
\mathcal Q_\varepsilon = \bigcup_{\theta \in 
\R
, \lambda \in (0,\eps)} B_{L^2(\Gc)}(\Theta_{\theta,0,\lambda} Q, \varepsilon). 
\]
It is endowed with the topology inherited from $L^2(\Gc)$. 
Finally, we recall that $P_{b,\lambda}$ is defined in Proposition~\ref{prop:profile}
and $\rho$ by Lemma~\ref{nondeg.lem}. Then Proposition~\ref{prop:modulation} is a consequence of the following result.

\begin{proposition} \label{prop:modulation-1} 
There exist $\varepsilon > 0$ and a function $\pi = (\theta,b,\lambda) \in C^1(\mathcal Q_\varepsilon, \Omega)$ such that for any $u \in \mathcal Q_\varepsilon$ we have in $L^2(\Gc)$
\[
\Theta_{\pi(u)}^{-1} u - P_{b(u),\lambda(u),\gamma \lambda(u)} \in \big\{ y^2 P_{b(u),\lambda(u),\gamma \lambda(u)} ,i \Lambda P_{b(u),\lambda(u),\gamma \lambda(u)} , i\rho \big\}^\bot.
\]
\end{proposition}

\begin{proof}
For $\pi = (\theta,b,\lambda) \in \R%
\times \R \times (0,2)$, $w \in L^2(\Gc)$ and $\sigma \in \big(-\frac {N}{2|\gamma|},\frac {N}{2|\gamma|} \big)$, we set 
\[
h(\pi ; w,\sigma) = \Theta_{\pi}^{-1} w - P_{b,\sigma \lambda, \gamma \sigma \lambda}
\quad \text{and} \quad
F(\pi;w,\sigma) = 
\begin{pmatrix}
(h(\pi,w,\sigma),y^2 P_{b,\sigma \lambda, \gamma \sigma \lambda})_{L^2(\Gc)} \\
(h(\pi,w,\sigma),i \Lambda P_{b,\sigma \lambda, \gamma \sigma \lambda})_{L^2(\Gc)} \\
(h(\pi,w,\sigma),i\rho)_{L^2(\Gc)}
\end{pmatrix}.
\]
This defines functions of class $C^1$ from $\R
\times \R \times (0,2) \times L^2(\Gc) \times \big(-\frac {N}{2|\gamma|},\frac {N}{2|\gamma|} \big)$ to $L^2(\Gc)$ and $\R^3$, respectively. Moreover, we have $h(0,0,1;Q,0) = 0$ and $F(0,0,1;Q,0) = 0$ (the interest of the extra parameter $\sigma$ is that we can start the analysis around $\lambda = 1$ and $Q = P_{0,0,0}$). We have 
$(\partial_\theta P_{b,\sigma \lambda, \gamma \sigma \lambda},
\partial_b P_{b,\sigma \lambda, \gamma \sigma \lambda}, 
\partial_\lambda P_{b,\sigma \lambda, \gamma \sigma \lambda}) |_{b=0, \lambda = 1,\sigma = 0} 
= (0,0,0)$, so
\begin{equation*}
\nabla_{\theta,b,\lambda} h(0,0,1;Q,0) = \begin{pmatrix} -iQ \\ \frac {iy^2}{4} Q \\ \Lambda Q \end{pmatrix}.
\end{equation*}
By Lemma~\ref{intid.lem} we have $(Q,\Lambda Q) = 0$, so
\[
\mathsf{Jac}_{\theta,b,\lambda} F \big(0,0,1; Q,0 \big)
= 
\begin{pmatrix}
0 & 0 &  (\Lambda Q,y^2 Q)  \\
0 & \frac 14 (y^2 Q, \Lambda Q) & 0 \\
- (Q,\rho) & \frac 14 (y^2 Q , \rho) & 0
\end{pmatrix}.
\]
We also have $(y^2 Q,\Lambda Q) \neq 0$ and $(Q,\rho) \neq 0$, so this partial Jacobian is invertible. By the Implicit Function Theorem, there exist a neighborhood 
$\Uc \subset \R%
\times \R \times (0,2)$ of $(0,0,1)$, a neighborhood $\Vc$ of $(Q,0)$ in $L^2(\Gc) \times \big(-\frac {N}{2|\gamma|},\frac {N}{2|\gamma|} \big)$ and a function $\Pi_0 = (\theta_0,b_0,\lambda_0)  : \Vc \to \Uc$ of class $C^1$ such that, for all $\pi \in \Uc$ and $(w,\sigma) \in \Vc$, there holds 
\[
F(\pi;w,\sigma) = 0 \quad \Longleftrightarrow \quad \pi = \Pi_0(w,\sigma).
\]
We now fix $\varepsilon  \in \big(0,\frac N {2|\gamma|}\big)$ so small that $B(Q,\varepsilon) \times (-\varepsilon,\varepsilon) \subset \Vc$.

Let $u \in \mathcal Q_\varepsilon$. 
For $(\theta_1,\lambda_1) \in \R%
\times (0,\varepsilon)$ 
such that $w_1 =  \Theta_{\theta_1,0,\lambda_1}^{-1} u \in B(Q,\varepsilon)$, we set 
\begin{equation} \label{eq:theta-b-lambda}
\theta(u) = \theta_0 (w_1, \lambda_1) + \theta_1, \quad b(u) =  b_0 (w_1 , \lambda_1) , \quad \lambda (u) = \lambda_0 (w_1  , \lambda_1 ) \lambda_1.
\end{equation}
We have to check that this definition does not depend on the choice of $(\theta_1,\lambda_1)$. Let $(\theta_2,\lambda_2) \in \R
\times (0,\varepsilon)$ such that we also have $w_2 =  \Theta_{\theta_2,0,\lambda_2}^{-1} u \in B(Q,\varepsilon)$.  Since $w_1 = \Theta_{\theta_1 - \theta_2,0,\lambda_1 / \lambda_2}^{-1} w_2$, we have 
\begin{multline*}
F \big(\theta_0(w_1,\lambda_1) + \theta_1 - \theta_2, b_0(w_1,\lambda_1), \lambda(w_1,\lambda_1) \lambda_1 / \lambda_2 ; w_2 , \lambda_2 \big)\\
= F \big(\theta_0(w_1,\lambda_1) , b_0(w_1,\lambda_1), \lambda(w_1,\lambda_1)  ; w_1 , \lambda_1 \big) = 0,
\end{multline*}
which implies that 
\[
\theta_0 (w_2,\lambda_2) + \theta_2 = \theta_0(w_1,\lambda_1) + \theta_1, \quad b_0 (w_2,\lambda_2) = b_0(w_1,\lambda_1), \quad \lambda_0 (w_2,\lambda_2) \lambda_2 = \lambda_0(w_1,\lambda_1) \lambda_1.
\]
Thus, $\theta(u)$, $b(u)$ and $\lambda(u)$ are well defined by~\eqref{eq:theta-b-lambda}. 
This yields a function $\pi = (\theta,b,\lambda) \in C^1(\mathcal Q_\varepsilon, \Omega)$ such that, 
for all $u \in \mathcal Q_\eps$,
\begin{align*}
\Theta_{\theta(u),b(u),\lambda(u)}^{-1} u - P_{b(u),\lambda(u),\gamma \lambda(u)} & = \Theta_{\theta_0(w_1,\lambda_1),0,\lambda_0(w_1,\lambda_1)}^{-1} w_1 - P_{b_0(w_1,\lambda_1), \lambda_1 \lambda_0(w_1,\lambda_1), \gamma \lambda_1 \lambda_0(w_1,\lambda_1)}\\
& \in \big\{ y^2 P_{b(u),\lambda(u),\gamma \lambda(u)}, i\Lambda P_{b(u),\lambda(u),\gamma \lambda(u)}, i\rho \big\}^\bot,
\end{align*}
which completes the proof. 
\end{proof}

The functions $\theta(t), b(t), \lambda(t)$ obtained in Proposition~\ref{prop:modulation} are 
$C^1$ since they come from the composition $\pi\circ u$, and $u\in C^1(I,L^2(\mathcal G))$.
More generally, one could show that they remain $C^1$ even for a weak solution
$u\in C(I,L^2(\mathcal G))\cap C^1(I,H^1(\mathcal G)^\star)$ by using
the following result.

\begin{proposition} \label{prop:modulation-2}
Let $I$ be an interval of $\R$. Let $u \in C^0(I,L^2(\Gc)) \cap C^1(I,H^1(\mathcal G)^\star)$. Assume that there exists a sequence $(u_k)_{k \in \N}$ in $C^1(I,L^2(\Gc))$ which goes to $u$ in $C^0(I,L^2(\Gc)) \cap C^1(I,H^1(\mathcal G)^\star)$. Then the map $\pi \circ u$ (with $\pi$ given by Proposition~\ref{prop:modulation-1}) is of class $C^1$ on $I$.
\end{proposition}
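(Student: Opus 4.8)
The plan is to exploit the regularizing sequence $(u_k)$: each $\pi\circ u_k$ is of class $C^1$ for elementary reasons, and I will show that $\pi\circ u_k$ together with its derivative converge uniformly, so that $\pi\circ u$ inherits the $C^1$ regularity. Since the assertion is local, I fix $t_* \in I$ and a compact subinterval $J\subset I$ containing $t_*$. Because $\mathcal Q_\delta$ is open in $L^2(\Gc)$ and $u(J)$ is a compact subset of $\mathcal Q_\delta$, for $k$ large enough we have $u_k(t)\in\mathcal Q_\delta$ for every $t\in J$. For such $k$, combining $u_k\in C^1(J,L^2(\Gc))$ with the fact that $\pi\in C^1(\mathcal Q_\delta,\Omega_\eta)$ for the $L^2$ topology (Proposition \ref{prop:modulation-1}), the chain rule gives $\pi\circ u_k\in C^1(J)$ with $(\pi\circ u_k)'(t)=D\pi(u_k(t))[u_k'(t)]$, where $D\pi(v)\in\mathcal L(L^2(\Gc),\R^3)$ denotes the differential of $\pi$.

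Next I would make $D\pi$ explicit. The relation defining $\pi$ reads $F(\pi(v);v)=0$, where the three components of $F$ are $\scalar{\Theta_\pi^{-1}v-P_{b,\lambda}}{\xi_i}_{L^2(\Gc)}$ with $\xi_1=y^2P_{b,\lambda}$, $\xi_2=i\Lambda P_{b,\lambda}$, $\xi_3=i\rho$. Differentiating this identity gives
\[
D\pi(v)[\dot v]=-\big(\mathsf{Jac}_\pi F(\pi(v);v)\big)^{-1}\Phi(\pi(v))[\dot v],\qquad \Phi(\pi)[\dot v]=\Big(\scalar{\Theta_\pi^{-1}\dot v}{\xi_i}_{L^2(\Gc)}\Big)_{i=1,2,3},
\]
where $\mathsf{Jac}_\pi F(\pi(v);v)$ is invertible and depends continuously on $v\in\mathcal Q_\delta$ (this is exactly what the implicit function theorem underlying Proposition \ref{prop:modulation-1} provides). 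The key point is that $\Phi(\pi)$ extends continuously from $L^2(\Gc)$ to $H^{-1}(\Gc)$. Indeed, since $\Theta_\pi$ is unitary on $L^2(\Gc)$ one has $\scalar{\Theta_\pi^{-1}\dot v}{\xi_i}_{L^2(\Gc)}=\dual{\dot v}{\Theta_\pi\xi_i}$, and the test functions $\xi_1,\xi_2,\xi_3$ belong to $H^1(\Gc)$ with fast decay (exponential for $y^2P_{b,\lambda}$ and $i\Lambda P_{b,\lambda}$ because $P_{b,\lambda}\in C^1_{\exp}$, Schwartz-type for $\rho$). A direct computation shows that $\Theta_\pi$ maps such fast-decaying functions into $H^1(\Gc)$; the only delicate contribution is the factor of $y$ generated by differentiating the Gaussian phase, which is controlled by the decay of $\xi_i$, and the resulting $H^1$ norm is bounded uniformly for $\pi$ in any set where $\lambda$ stays away from $0$. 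Hence $\dual{\dot v}{\Theta_\pi\xi_i}$ makes sense for $\dot v\in H^{-1}(\Gc)$, defining $\widetilde\Phi(\pi)[\dot v]$, which is jointly continuous in $(\pi,\dot v)$ and agrees with $\Phi(\pi)$ on $L^2(\Gc)$.

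I would then pass to the limit on $J$. There $t\mapsto\lambda(u(t))$ is continuous, hence bounded below by some $\lambda_{\min}>0$, and since $u_k\to u$ uniformly in $L^2(\Gc)$ the parameters $\pi(u_k(t))$ converge uniformly to $\pi(u(t))$ (continuity of $\pi$) while $\lambda(u_k(t))$ is eventually bounded below uniformly; in particular the matrices $\mathsf{Jac}_\pi F(\pi(u_k(t));u_k(t))^{-1}$ are well defined and converge uniformly to $\mathsf{Jac}_\pi F(\pi(u(t));u(t))^{-1}$. As $u_k'\to u'$ uniformly in $H^{-1}(\Gc)$, the continuity of $\widetilde\Phi$ then gives that
\[
(\pi\circ u_k)'(t)=-\big(\mathsf{Jac}_\pi F(\pi(u_k(t));u_k(t))\big)^{-1}\widetilde\Phi(\pi(u_k(t)))[u_k'(t)]
\]
converges uniformly on $J$ to
\[
g(t):=-\big(\mathsf{Jac}_\pi F(\pi(u(t));u(t))\big)^{-1}\widetilde\Phi(\pi(u(t)))[u'(t)].
\]
Since $\pi\circ u_k\to\pi\circ u$ and $(\pi\circ u_k)'\to g$ both uniformly on $J$, the classical theorem on term-by-term differentiation shows that $\pi\circ u\in C^1(J)$ with $(\pi\circ u)'=g$. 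As $t_*$ is arbitrary, $\pi\circ u\in C^1(I)$.

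The main obstacle is the content of the second paragraph: establishing that $\Phi(\pi)$ truly extends to $H^{-1}(\Gc)$, that is, that $\Theta_\pi$ sends each test function $\xi_i$ into $H^1(\Gc)$ with bounds locally uniform in $\pi$. This is exactly where the exponential decay encoded in $C^1_{\exp}$ (and the decay of $\rho$) is needed to absorb the linear growth coming from the quadratic phase $e^{-iby^2/4}$, and where $\lambda$ must be kept away from $0$. Once this is in place, the uniform invertibility of $\mathsf{Jac}_\pi F$ on $J$ follows routinely from its invertibility at the base point together with continuity, and the remaining convergences are standard.
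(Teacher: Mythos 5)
Your proposal is correct and essentially reproduces the paper's proof: both regularize by the $C^1(I,L^2(\Gc))$ approximants, differentiate the implicit relation $F(\pi;v)=0$ to get $\pi_k' = -(\mathsf{Jac}_\pi F)^{-1} D_v F\cdot u_k'$, rewrite the pairings as $H^{-1}$--$H^1$ dualities $\dual{u_k'}{\Theta_\pi \xi_i}$ (exactly the paper's key step, with the decay of $y^2P_{b,\lambda}$, $\Lambda P_{b,\lambda}$ and $\rho$ ensuring $\Theta_\pi\xi_i\in H^1(\Gc)$), and pass to the limit to identify a continuous derivative. The only cosmetic difference is that the paper first gauge-fixes with $\Theta_{\theta_1,0,\lambda_1}$ (and the auxiliary parameter $\beta$) so that all Jacobians are evaluated in a local chart near the base point $(0,0,1;Q,0)$, concluding via the integral representation $\tilde\pi_k(t)=\tilde\pi_k(\tau_1)+\int_{\tau_1}^t\tilde\pi_k'(\tau)\diff\tau$, whereas you work with the global $\pi$ on $\mathcal Q_\delta$ (with $\lambda$ bounded below on the compact interval playing the role of the gauge-fixing) and invoke the term-by-term differentiation theorem --- the same argument in substance.
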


\section{Uniform estimates}
\label{unifest_proof.sec}

In this section, we prove Proposition~\ref{prop:uniform_in_t}.

\subsection{Change of variables and bootstrap argument}

We recall that the profile $P$ was defined in Proposition~\ref{prop:profile}. 
We consider $t_1 < 0$ and the maximal solution $u_1$ of~\eqref{eq:nls} such that $u_1(t_1)$ 
is given by~\eqref{eq:definition_of_u_1}. We denote by $I_{t_1}$ the maximal interval 
on which $u_1$ is defined and satisfies~\eqref{eq:dist_u_modulation}. 
Since $u_1(t_1)\in D_{\gamma}\cap \Sigma^2(\Gc)$, it follows that
$u_1 \in C(I_{t_1},D_{\gamma}\cap \Sigma^2(\Gc)) \cap C^1(I_{t_1},L^2(\Gc))$.
Let $\tilde \theta \in C^1(I_{t_1},\R)$, $\tilde b \in C^1(I_{t_1},\R)$, 
$\tilde \lambda \in C^1(I_{t_1},\R_+^*)$ and 
$\tilde h \in C(I,D_{\gamma\lambda}\cap \Sigma^2(\Gc)) \cap C^1(I,L^2(\Gc))$ 
be the corresponding modulation parameters and remainder constructed in Proposition~\ref{prop:modulation}. 

We now define precisely the rescaled time variable $s$ that appears in the formal
change of variables~\eqref{chvarw}-\eqref{chvar}. 
We first define the final time $s_1$ via the solution $\lambda_{\mathrm{mo}}$ of the model dynamical system~\eqref{dynsys_unp} (see~\eqref{def:lambda-mo})
by setting
\begin{equation} \label{eq:t1-s1}
t_1 = -\int_{s_1}^{+\infty} \lambda_{\mathrm{mo}}(s)^2 \diff s = -\frac {4}{3 \alpha_\star^2 s_1^3},
\end{equation} 
where we recall that $\alpha_\star$ was defined in~\eqref{def:alpha-01}. This yields
\begin{equation}\label{eq:def_of_s_1}
 s_1=\Big(\frac{4}{3\alpha_\star^2} \Big)^{1/3}|t_1|^{-1/3}.
\end{equation}
Then for $t \in I_{t_1}$ we define
\begin{equation}\label{ch_var_precise}
\mathsf s(t)=s_1-\int_t^{t_1}\frac{1}{\tilde\lambda(\tau)^2}\diff\tau.
\end{equation}
We set $J_{s_1} = \mathsf s (I_{t_1})$. Since $\mathsf s : I_{t_1} \to J_{s_1}$ is strictly increasing, $t$ may in turn be expressed as a function of $s$ via $\mathsf t = \mathsf s^{-1} : J_{s_1} \to I_{t_1}$. This will allow us to obtain Proposition~\ref{prop:uniform_in_t} as a consequence of the uniform estimates in variable $s$ which are stated in Proposition~\ref{prop:uniform_in_s} below.

We then express the modulation parameters $\tilde b$, 
$\tilde \lambda$ and $\tilde \theta$  and the remainder $\tilde h$ as functions of the variable $s$ by setting on $J_{s_1}$
\begin{equation}\label{mod_param_st}
b(s) = \tilde b(\mathsf t(s)), \quad \lambda(s)=\tilde\lambda(\mathsf t(s)),
\quad \theta(s)=\tilde\theta(\mathsf t(s)),\quad h(s,\cdot)=\tilde h(\mathsf t(s),\cdot).
\end{equation}
In the rest of this section, we will often use the notation
\[
h_1=\re h,\quad h_2=\im h.
\]

In the new variables $(s,y)$, the function $v : (s,y) \in J_{s_1} \times \Gc \mapsto u\big(\mathsf t(s), \lambda(s) y\big)$ satisfies
\begin{equation} \label{eq:nls-v}
iv_s - H_{\gamma \lambda} v - \frac {\lambda_s}{\lambda} y v_y + \lambda^2 |v|^4 v = 0,
\end{equation}
and the final condition
\begin{equation}\label{eq:u1s1}
v_1(s_1,y) = \frac 1 {\sqrt{\lambda_1}} P_{b_1,\lambda_1}(y) e^{-i\frac {b_1y^2}4}.
\end{equation}

From now on we denote by 
\[
P : (s,y) \mapsto P_{b(s),\lambda(s),\gamma \lambda(s)}(y)
\]
the profile given by Proposition~\ref{prop:profile} for the modulation parameters~\eqref{mod_param_st}. 
Then, in view of \eqref{chvarw},
$$w = P+h = \sqrt{\lambda} e^{-i\theta} e^{ib(s)y^2/4}v$$ 
satisfies~\eqref{nls_w}. 

With the definition~\eqref{def:Psi-K} of $\Psi_\kappa$,~\eqref{nls_w} can then be rewritten as an equation for $h$: 
\begin{equation}\label{equ_for_h}
ih_s-H_{\gamma\lambda}h-h + f(P+h)-f(P)+\tModop(s)h
=-\Psi_\kappa-\Modop(s)P+b\Big(b+\frac{\lambda_s}{\lambda}\Big)\frac{y^2}{2}P,
\end{equation}
where for $\phi\in \Sigma^2(\mathcal G)$ we have set
\begin{equation*}
\tModop(s) \phi
:=(1-\theta_s)\phi+(b_s+b^2)\frac{y^2}{4}\phi
-i\Big(b+\frac{\lambda_s}{\lambda}\Big)\Lambda \phi
-b\Big(b+\frac{\lambda_s}{\lambda}\Big)\frac{y^2}{2} \phi
\end{equation*}
and
\begin{equation*}
\Modop(s) \phi
:=(1-\theta_s)\phi+(b_s+b^2-\alpha)\frac{y^2}{4} \phi 
-i\Big(b+\frac{\lambda_s}{\lambda}\Big)\Lambda \phi.
\end{equation*}
Notice that
\begin{equation*}
\tModop(s)\phi =
\Modop(s) \phi  + \alpha\frac{y^2}{4} \phi
-b\Big(b+\frac{\lambda_s}{\lambda}\Big)\frac{y^2}{2} \phi.
\end{equation*}

For any fixed $\lambda>0$, we define on $\Sigma^1(\mathcal G)$ the norm $\|\cdot\|_{\lambda}$ by
\begin{equation}\label{norm_lambda}
\| \phi \|_{\lambda}^2:=\| \phi \|_{H^1}^2+\lambda\|y \phi \|_{L^2}^2, \quad \phi \in\Sigma^1(\mathcal G).
\end{equation}
Note that $\|\cdot\|_{\lambda}$ is equivalent to the usual norm 
$\|\cdot\|_{1}\equiv \|\cdot\|_{\Sigma^1}$.

\begin{proposition}[Uniform estimates in the $s$ variable] \label{prop:uniform_in_s}
Let $s_0$ be given by Proposition \ref{prop:dynamic}. 
Choosing $s_0$ larger if necessary, there exist 
$C,c>0$ such that, for any $s_1 > s_0$ and $\kappa\in\N^*$, we have $[s_0,s_1] \subset J_{s_1}$ and, for all $s \in [s_0,s_1]$,
\begin{equation}\label{improved_estimates_h}
\|h(s)\|_{\lambda(s)}\leq C s^{-(\kappa-1)},
\end{equation}
\begin{equation}\label{improved_estimates_bl}
\Big|\frac{\lambda(s)^{1/2}}{\lambda_\mathrm{mo}(s)^{1/2}}-1\Big|\leq \frac{c}{s^2},
\quad
\Big|\frac{b(s)}{b_\mathrm{mo}(s)}-1\Big|\leq \frac{c}{s^2}.
\end{equation}
\end{proposition}

\begin{remark}\label{rem:improved_estimates}
    The estimates~\eqref{improved_estimates_bl} can be improved to
    \begin{equation}\label{better_improved_estimates_bl}
\Big|\frac{\lambda(s)^{1/2}}{\lambda_\mathrm{mo}(s)^{1/2}}-1\Big|\lesssim \frac{1}{s^4},
\quad
\Big|\frac{b(s)}{b_\mathrm{mo}(s)}-1\Big|\lesssim \frac{1}{s^4},
\quad s\in[s_0,s_1],
\end{equation}
by shifting the energy level in the definition of $\mathcal F$ in~\eqref{defofF} to
$\underline{\mathcal E}^\star=\mathcal E^\star-(\eps_{0,2}+2\alpha_\star\eps_{2,1})$ 
(see Remark~\ref{rem:improved_est_3}).
\end{remark}

\begin{proof}[Proof of Proposition~\ref{prop:uniform_in_t}, assuming Proposition~\ref{prop:uniform_in_s}]
Let $s_1>s_0$ and $t_1$ be defined by \eqref{eq:t1-s1}.
Since $[s_0,s_1]\subset J_{s_1}$, we have $\mathsf t([s_0,s_1])\subset \mathsf t(J_{s_1})=I_{t_1}$, for all $s_1>0$. 
However, since the map $\mathsf t$ depends on $s_1$, so does the left end-point of the interval $\mathsf t([s_0,s_1])$.
We now construct $t_0$, independent of $t_1$, such that $\mathsf t(s_0)\le t_0$, and so $[t_0,t_1]\subset I_{t_1}$ for any $t_1$.

By \eqref{eq:t1-s1}, we have
\begin{align*}
\mathsf t(s_0)
&=t_1-\int_{s_0}^{s_1}\lambda^2(\sigma)\diff\sigma \\
&=-\int_{s_1}^{+\infty}\lambda_\mathrm{mo}^2(\sigma)\diff\sigma -\int_{s_0}^{s_1}\lambda^2(\sigma)\diff\sigma \\
&=-\int_{s_0}^{+\infty}\lambda_\mathrm{mo}^2(\sigma)\diff\sigma + \int_{s_0}^{s_1}\lambda_\mathrm{mo}^2(\sigma)\diff\sigma
-\int_{s_0}^{s_1}\lambda^2(\sigma)\diff\sigma \\
&=-\Big(\frac{4}{3\alpha_\star^2}\Big)s_0^{-3} -\int_{s_0}^{s_1}\big[\lambda^2(\sigma)-\lambda_\mathrm{mo}^2(\sigma)\big]\diff\sigma. 
\end{align*}
Furthermore, for $s \in J_{s_1}$, \eqref{improved_estimates_bl} implies
\begin{equation*}
\big| \lambda(s)^2 - \lambda_\mathrm{mo}(s)^2 \big| \lesssim s^{-6}.
\end{equation*}
Hence,
\[
\Big| \int_{s_0}^{s_1}\big[\lambda^2(\sigma)-\lambda_\mathrm{mo}^2(\sigma)\big]\diff\sigma \Big|
\le \int_{s_0}^{+\infty}\sigma^{-6}\diff\sigma \lesssim s_0^{-5}.
\]
It follows that, upon choosing $s_0$ large enough, $\mathsf t(s_0)\le t_0:=-(\frac{2}{3\alpha_\star^2})s_0^{-3}$ for any $t_1$.

Next, for $t \in [t_0,t_1]$,
\[
t = t_1 - \int_{\mathsf s (t)}^{s_1} \lambda(\sigma)^2 \diff \sigma = -\int_{\mathsf s (t)}^{\infty}\lambda_\mathrm{mo}(\sigma)^2\diff\sigma + O \big({\mathsf s (t)}^{-5} \big)
=-\frac{4}{3\alpha_\star^2}{\mathsf s (t)}^{-3} + O \big({\mathsf s (t)}^{-5} \big).
\]
Hence,
\begin{equation*}
\mathsf s(t) = \Big(\frac{3\alpha_\star^2}{4}|t| \Big)^{-1/3} \big( 1 + O(\abs t^{\frac 23}) \big), \quad t\to0^-.
\end{equation*}
It follows that
\begin{equation*}
b_\mathrm{mo}(\mathsf s(t))
=\frac{2}{\mathsf s(t)} =  2\Big(\frac{3\alpha_\star^2}{4}|t| \Big)^{1/3} + O(\abs t), \quad t\to0^-
\end{equation*}
and
\begin{equation*}
\lambda_\mathrm{mo}(\mathsf s(t))=\frac{2}{\alpha_\star \mathsf s(t)^2}
= \frac{2}{\alpha_\star}\Big(\frac{3\alpha_\star^2}{4}|t| \Big)^{2/3} + O(\abs t^{\frac 43}), \quad t\to0^-.
\end{equation*}
Estimates~\eqref{t_est_bl} now follow directly from~\eqref{improved_estimates_bl}.
Recalling the definition of the norm $\|\cdot\|_\lambda$ in~\eqref{norm_lambda},
the estimates~\eqref{t_est_h} then follow from~\eqref{t_est_bl} and~\eqref{improved_estimates_h}.
Finally, \eqref{t_est_nrj} follows from estimate \eqref{s_est_nrj}, proved below.
\end{proof}

The proof of Proposition~\ref{prop:uniform_in_s} relies on the following bootstrap result.

\begin{proposition} \label{prop:sstar}
Let $s_0$ be given by Proposition~\ref{prop:dynamic}. For $s_1>s_0$,
define $s_\star=s_\star(s_1)$ as the infimum of $\sigma\in J_{s_1} \cap [1,s_1]$ such that, for all $s\in[\sigma,s_1]$, 
\begin{equation}\label{bootstrap_hyp}
\|h(s)\|_{\lambda} < s^{-(\kappa-2)}, \quad
\Big|\frac{\lambda(s)^{1/2}}{\lambda_\mathrm{mo}(s)^{1/2}}-1\Big| < \frac{1}{s^{1/2}},
\quad
\Big|\frac{b(s)}{b_\mathrm{mo}(s)}-1\Big| < \frac{1}{s^{1/2}}.
\end{equation} 
There exist $\kappa_0 \ge 2$ and $C,c>0$, all independent of $s_1$,
such that \eqref{improved_estimates_h} and \eqref{improved_estimates_bl} 
hold for $\kappa \geq \kappa_0$ and $s \in [s_\star,s_1]$. 
\end{proposition}

\begin{remark}
The constant $\kappa_0$ can be chosen as the smallest integer larger than $\max\{9,2k/k_0+1\}$
(see Proposition~\ref{coercivity.prop}, Proposition~\ref{dhds_below.prop} and \eqref{aaa}).
\end{remark}

Note that, for $s \in [s_\star,s_1]$, \eqref{def:lambda-mo} and~\eqref{bootstrap_hyp} 
yield
\begin{equation} \label{eq:dec-b-lambda}
b(s)\sim \frac2s,\quad \lambda(s) \sim \frac2{\alpha_\star}\frac1{s^2}.
\end{equation}

\begin{proof}[Proof of Proposition~\ref{prop:uniform_in_s}, assuming Proposition~\ref{prop:sstar}] 
It is enough to prove Proposition~\ref{prop:uniform_in_s} for some large $\kappa$. 
We thus assume that $\kappa \geq \kappa_0$, where $\kappa_0$ is given by Proposition~\ref{prop:sstar}, 
and hence~\eqref{improved_estimates_h}-\eqref{improved_estimates_bl} hold on $[s_\star,s_1]$ for any $s_1$. 
Choosing $s_0$ larger if necessary, we can assume that $s_0 \geq  \max\{2C,2c^{2/3}\}$. Suppose by contradiction that there exists $s_1>s_0$ such that either 
$J_{s_1}\not\subset[s_0,s_1]$ or $J_{s_1}\subset[s_0,s_1]$ but one of the inequalities in \eqref{improved_estimates_h}-\eqref{improved_estimates_bl}
fails on $[s_0,s_1]$. By Proposition~\ref{prop:sstar}, this implies $s_0<s_\star$. By \eqref{bootstrap_hyp} and Corollary \ref{cor:P-Q},
we have $\inf J_{s_1} < s_\star$ if $s_0$ is large enough. Furthermore,
since $h(s_1)=0$ by construction, it follows from Proposition~\ref{prop:dynamic} that~\eqref{bootstrap_hyp} is satisfied at $s=s_1$, provided $s_0$ is large enough.
Hence, by continuity, $s_{\star} < s_1$ and at least one of the inequalities of~\eqref{bootstrap_hyp} becomes an equality at $s=s_\star$. 
Since $s_\star>\max\{2C,2c^{2/3}\}$, this yields a 
contradiction with \eqref{improved_estimates_h}-\eqref{improved_estimates_bl}.
Thus, $s_\star \leq s_0$. This means that $s_0 \in J_{s_1}$ and \eqref{improved_estimates_h}-\eqref{improved_estimates_bl} hold on $[s_0,s_1]$.
\end{proof}

The rest of this section is devoted to the proof of Proposition~\ref{prop:sstar}.

\subsection{Modulation estimates}\label{sec:modulation_estimates}

For the rest of the section we work under the assumptions of Proposition~\ref{prop:sstar}. All the estimates which appear in the discussion are independent of $s_1$.

We first justify quantitatively that, for large times, 
the modulation parameters are approximate solutions
of the model dynamical system~\eqref{dynsys_unp}.

Let 
\begin{equation*}
\Mod(s):=
\begin{pmatrix}
    b+\lambda_s/\lambda \\
    b_s+b^2-\alpha \\
    1-\theta_s
\end{pmatrix},
\quad s\in[s_0,s_1].
\end{equation*}
Then by~\eqref{eq:dec-b-lambda} and~\eqref{def:alpha} we have 
\begin{equation} \label{eq:b-s-lambda-s}
|b_s| \lesssim \frac 1 {s^2} + |\Mod(s)|, \quad |\lambda_s| \lesssim \frac 1 {s^3} + \frac {|\Mod(s)|}{s^2}
\end{equation}

\begin{lemma}\label{mod_est.lem}
If $\kappa\ge 5$, there exists $C_\kappa > 0$ (independent of $s_1$) such that, 
for all $s\in[s_\star,s_1]$,
\begin{align}
|\Mod(s)|	& \leq C_\kappa  s^{-\kappa} \label{mod_est}	,\\ 	
|(h(s),Q)_{L^2}|		&\leq C_\kappa s^{-\kappa}.		\label{almost_orth}
\end{align}
\end{lemma}

\begin{proof}
Let us first define
\begin{equation*}
s_{\star\star}:=\inf\big\{\sigma\in[s_\star,s_1] : |(h(s),P(s))_{L^2}| < s^{-\kappa},
\ \forall s\in[\sigma,s_1]\big\}.
\end{equation*}
Since $h(s_1)=0$, we have $s_{\star\star}\in[s_\star,s_1)$. 
The main step is to prove that~\eqref{mod_est} holds on $[s_{\star\star},s_1]$. By a bootstrap argument, 
it will then follow that $s_{\star\star}=s_\star$, and we will finally deduce~\eqref{almost_orth}. 

We start by differentiating the equality $(h,i\Lambda P)=0$ with respect to $s$. Since $P$, $h$ 
and $\Lambda P$ belong to $C^1(J_{s_1},L^2(\Gc))$ by Proposition~\ref{prop:profile}, this simply gives, on $[s_{\star\star},s_1]$,
\begin{equation}\label{diff_oc1}
\scalar{h_s}{i\Lambda P}+ \scalar{h}{i\Lambda P_s}=0.
\end{equation}
Firstly, for  $\kappa \geq 4$, \eqref{def:alpha}, \eqref{def:Psi-K}, \eqref{remainder_est},
\eqref{eq:expansion_of_f} and \eqref{bootstrap_hyp} yield
\begin{equation} \label{eq:h-Lambda-Ps}
|(h,i\Lambda P_s)| \lesssim \|h\|_{L^2}\big(|\Mod(s)|+s^{-2})
\lesssim s^{-(\kappa-2)}|\Mod(s)|+s^{-\kappa} 
\lesssim s^{-2}|\Mod(s)|+s^{-\kappa}.
\end{equation}
We next estimate $\scalar{h_s}{i\Lambda P}=-\scalar{ih_s}{\Lambda P}$. 
By Corollary~\ref{cor:P-Q} and \eqref{bootstrap_hyp}, we have
\begin{equation*}
f(P+h)-f(P)=\dif f(P)h+O(|h|^2)=\dif f(P)h+O(s^{-(\kappa-2)}|h|)
=\dif f(Q_{\gamma\lambda})h+O(s^{-2}|h|),
\end{equation*}
where $\dif f$ denotes the differential (in the real sense) of $f:\C\to\C, \ f(z)=|z|^4z$, 
that is, $\dif f(z)h=|z|^4h+4|z|^2z\re(z\bar h)$.
Thus, the equation for $h$ reads 
\begin{align}\label{equ_for_h1}
ih_s=
& \ H_{\gamma\lambda}h+h-\dif f(Q_{\gamma\lambda})h +O(s^{-2}|h|) \notag \\
&-\Big[\tModop(s)h+\Modop(s)P
-b\Big(b+\frac{\lambda_s}{\lambda}\Big)\frac{y^2}{2}P+\Psi_\kappa\Big].
\end{align}
Furthermore, noting that
\begin{equation}\label{eq:lin_op_h}
H_{\gamma\lambda} h + h - \dif f(Q_{\gamma\lambda}) h = \Lop_{+,\gamma\lambda} h_1 
+ i \Lop_{-,\gamma\lambda} h_2
\end{equation}
we have, for all $s\in[s_{\star\star},s_1]$, 
\begin{align*}
\scalar{H_{\gamma\lambda}h+h-\dif f(Q_{\gamma\lambda})h}{\Lambda P} 
=(\Lop_{+,\gamma \lambda} h_1, \Lambda Q)
+(H_{\gamma\lambda}h+h-\dif f(Q_{\gamma\lambda})h, \Lambda (P-Q)).
\end{align*}
By Corollary~\ref{cor:P-Q}, it follows that 
\begin{align*}
|(H_{\gamma\lambda}h+h-\dif f(Q_{\gamma\lambda})h, \Lambda (P-Q))| \lesssim \|h\|_{H^1}\|\Lambda (P-Q)\|_{H^1} 
\lesssim s^{-2}\|h\|_{H^1}.
\end{align*}
On the other hand, by Lemma~\ref{nondeg.lem} and~\eqref{bootstrap_hyp}, 
\begin{align*}
(\Lop_{+,\gamma \lambda} h_1, \Lambda Q)
& = \dual{\Lf_{+,\gamma \lambda} h_1}{\Lambda Q} \\
& = \dual{\Lf_{+} h_1}{\Lambda Q}+\gamma\lambda h_1(0)(\Lambda Q)(0)\\
& = (h_1,\Lop_+\Lambda Q)+O(s^{-2} \norm{h}_{H^1}) \\
& = -2(h,Q)+O(s^{-\kappa}).
\end{align*}
Therefore, by Corollary~\ref{cor:P-Q} and the definition of $s_{\star\star}$ we have, on $[s_{\star\star},s_1]$,
\begin{align*}
\scalar{H_{\gamma\lambda}h+h-\dif f(Q_{\gamma\lambda})h +O(s^{-2}|h|)}{\Lambda P}
&=-2(h,Q)+O(s^{-\kappa}) \\
&=-2(h,P)+O(s^{-\kappa})=O(s^{-\kappa}).
\end{align*}
Now we estimate the terms in the second line of~\eqref{equ_for_h1}. 
Firstly, putting the factors $y^2$ on the right,
\begin{align*}
\scalar{\tModop(s)h}{\Lambda P} 
&=\scalar{\Modop(s)h  + \alpha\frac{y^2}{4}h
-b\Big(b+\frac{\lambda_s}{\lambda}\Big)\frac{y^2}{2}h}{\Lambda P}  \\
&=O(|\Mod(s)|\|h\|_{L^2})+O(|\alpha(s)|\|h\|_{L^2})+O(b|\Mod(s)|\|h\|_{L^2})
 \\
&=O(|\Mod(s)|\|h\|_{L^2})+O(s^{-\kappa}).
\end{align*}
Next, by Lemma~\ref{intid.lem},
\begin{align*}
&\scalar{\Modop(s)P}{\Lambda P}
- \scalar{b\Big(b+\frac{\lambda_s}{\lambda}\Big)\frac{y^2}{2}P}{\Lambda P}\\
&=\scalar{\Modop(s)Q}{\Lambda Q}
- \scalar{b\Big(b+\frac{\lambda_s}{\lambda}\Big)\frac{y^2}{2}Q}{\Lambda Q}
+O(s^{-2}|\Mod(s)|) \\
&=-\frac14(b_s+b^2-\alpha)\|yQ\|_{L^2}^2
+\frac12b\Big(b+\frac{\lambda_s}{\lambda}\Big)\|yQ\|_{L^2}^2 
+O(s^{-2}|\Mod(s)|) .%
\end{align*}
Finally,
\begin{equation*}
\scalar{\Psi_\kappa}{\Lambda P}=O(s^{-2}|\Mod(s)|)+O(s^{-2\kappa}).
\end{equation*}
All in all, we find that
\begin{equation*}
\scalar{h_s}{i\Lambda P}
=\frac14\|yQ\|_{L^2}^2\Big[2b\Big(b+\frac{\lambda_s}{\lambda}\Big)
-(b_s+b^2-\alpha)\Big)\Big]
+O(s^{-2}|\Mod(s)|)+O(s^{-\kappa}).
\end{equation*}
Hence, the restriction of~\eqref{diff_oc1} to $[s_{\star\star},s_1]$ gives
\begin{equation}\label{est_mod_inter}
(b_s+b^2-\alpha)-2b\Big(b+\frac{\lambda_s}{\lambda}\Big)=
O(s^{-2}|\Mod(s)|)+O(s^{-\kappa}).
\end{equation}

We next differentiate $(h,y^2 P)=0$ with respect to $s$ and we get, on $[s_{\star\star},s_1]$,
\begin{equation}\label{diff_oc2}
\scalar{h_s}{y^2 P}+\scalar{h}{y^2 P_s}=0.
\end{equation}
Similarly to \eqref{eq:h-Lambda-Ps}, for $\kappa\ge 4$ we deduce from \eqref{bootstrap_hyp} that
\begin{equation*}
|(h,y^2 P_s)|\lesssim s^{-2}|\Mod(s)|+s^{-\kappa}.
\end{equation*}
For the term $\scalar{h_s}{y^2 P}=\scalar{ih_s}{iy^2 P}$ we shall again use 
\eqref{equ_for_h1} on $[s_{\star\star},s_1]$. By Lemma~\ref{nondeg.lem},~\eqref{bootstrap_hyp}, Corollary~\ref{cor:P-Q} 
and Proposition~\ref{prop:modulation} we have, on $[s_{\star\star},s_1]$,
\begin{align*}
\scalar{H_{\gamma\lambda}h+h-\dif f(Q_{\gamma\lambda})h +O(s^{-2}|h|)}{iy^2P} 
&= \scalar{H_{\gamma\lambda}h+h-\dif f(Q)h}{iy^2 Q} +O(s^{-\kappa}) \\
&=\scalar{ h_2}{\Lf_- y^2Q}+O(s^{-\kappa}) \\
&=-4(h_2,\Lambda Q)+O(s^{-\kappa}) \\
&=-4(h,i\Lambda P)+O(s^{-\kappa}) \\
&=O(s^{-\kappa}).
\end{align*}
Using Lemma~\ref{intid.lem} and~\eqref{bootstrap_hyp}, similar calculations to the above yield
\begin{multline*}
\scalar{\tModop(s)h+\Modop(s)P
-b\Big(b+\frac{\lambda_s}{\lambda}\Big)\frac{y^2}{2}P+\Psi_\kappa}{iy^2P}\\
=\|yQ\|_{L^2}^2\Big(b+\frac{\lambda_s}{\lambda}\Big)+O(s^{-2}|\Mod(s)|)+O(s^{-\kappa}).
\end{multline*}
Hence, gathering all terms of~\eqref{diff_oc2}, we find that
\begin{equation}\label{est_mod_i}
\Big|b+\frac{\lambda_s}{\lambda}\Big|\lesssim s^{-2}|\Mod(s)|+s^{-\kappa},
\quad  s\in[s_{\star\star},s_1].
\end{equation}
It then follows from~\eqref{est_mod_inter} that
\begin{equation}\label{est_mod_ii}
|b_s+b^2-\alpha|\lesssim s^{-2}|\Mod(s)|+s^{-\kappa}, \quad  s\in[s_{\star\star},s_1].
\end{equation}

Finally, differentiating $(h, i\rho)=0$ with respect to $s$ yields, on $[s_{\star\star},s_1]$,
\begin{equation}\label{eq:diff_oc2}
\scalar{ih_s}{\rho}=0.
\end{equation}
As above we have, on $[s_{\star\star},s_1]$,
\begin{multline*}
\scalar{H_{\gamma\lambda}h +h-\dif f(Q_{\gamma\lambda})h +O(s^{-2}|h|)}{\rho} \\
= \scalar{h_1}{L_+ \rho}+O(s^{-\kappa}) 
=(h_1,y^2Q)+O(s^{-\kappa}) 
=(h,y^2P)+O(s^{-\kappa})
=O(s^{-\kappa}).
\end{multline*}
On the other hand, Lemma~\ref{intid.lem} and~\eqref{bootstrap_hyp} yield (notice that $\rho$ also decays exponentially)
\begin{eqnarray*}
\lefteqn{\scalar{\tModop(s)h+\Modop(s)P-b\Big(b+\frac{\lambda_s}{\lambda}\Big)\frac{y^2}{2}P+\Psi_K}{\rho}} \\
&& =\frac12\|yQ\|_{L^2}^2(1-\theta_s) + \frac14(y^2Q,\rho)(b_s+b^2-\alpha)
-\frac12(y^2Q,\rho)b\Big(b+\frac{\lambda_s}{\lambda}\Big)\\
&& +O(s^{-2}|\Mod(s)|)+O(s^{-\kappa}).
\end{eqnarray*}
It then follows from~\eqref{eq:diff_oc2} that
\begin{equation*}
|1-\theta_s|=O(|b_s+b^2-\alpha|)+O\Big(b\Big|b+\frac{\lambda_s}{\lambda}\Big|\Big)
+O(s^{-2}|\Mod(s)|)+O(s^{-\kappa}).
\end{equation*}
Thus, by~\eqref{est_mod_i} and~\eqref{est_mod_ii},
\begin{equation}\label{est_mod_iii}
|1-\theta_s|\lesssim s^{-2}|\Mod(s)|+s^{-\kappa}, \quad  s\in[s_{\star\star},s_1].
\end{equation}
Gathering~\eqref{est_mod_i},~\eqref{est_mod_ii} and~\eqref{est_mod_iii}, we conclude that
\begin{equation*}
|\Mod(s)|\lesssim s^{-2}|\Mod(s)|+s^{-\kappa},
\end{equation*}
which implies that~\eqref{mod_est} holds on $[s_{\star\star},s_1]$.

Since $h(s_1)=0$ and by conservation of the mass, we now have, for $s \in [s_{\star\star},s_1]$,
\begin{align*}
\|P(s_1)\|_{L^2}^2
&=\|v(s_1)\|_{L^2}^2=\|v(s)\|_{L^2}^2=\|P(s)+h(s)\|_{L^2}^2 \\
&=\|P(s)\|_{L^2}^2+2(P(s),h(s))+\|h(s)\|_{L^2}^2,
\end{align*}
so that
\begin{equation*}
(P(s),h(s))=-\frac12\|h(s)\|_{L^2}^2
+\frac12\big(\|P(s_1)\|_{L^2}^2-\|P(s)\|_{L^2}^2\big).
\end{equation*}
Furthermore, using
\eqref{def:Psi-K},~\eqref{remainder_est} and~\eqref{mod_est} (proved on $[s_{\star\star},s_1]$), 
we get
\begin{align*}
\big|\partial_s \|P\|_{L^2}^2\big|
=2|(P,P_s)|
= 2|(P,-i\Psi_\kappa)|
\lesssim \norm{\Psi_\kappa} \lesssim s^{-(\kappa+2)}.
\end{align*}
Hence, integrating from $s$ to $s_1$,
\begin{equation*}
\left|\|P(s_1)\|_{L^2}^2-\|P(s)\|_{L^2}^2\right| \lesssim s^{-(\kappa+1)}.
\end{equation*}
For $\kappa \geq 5$, it thus follows by~\eqref{bootstrap_hyp} that 
\begin{equation*}
|(P,h)| \lesssim \|h\|_{L^2}^2+s^{-(\kappa+1)}\lesssim s^{-(\kappa+1)}, \quad s\in [s_{\star\star},s_1].
\end{equation*}
Therefore, $s_{\star\star}=s_{\star}$ and in particular~\eqref{mod_est} holds on $[s_\star,s_1]$. 

Finally,~\eqref{almost_orth} follows from Corollary~\ref{cor:P-Q},~\eqref{bootstrap_hyp} and the estimate 
$|(P,h)| \lesssim s^{-\kappa}$, which now holds on $[s_\star,s_1]$.
\end{proof}

\subsection{Monotone energy-virial functional}

We define $F:\mathbb C\to\mathbb C$ by $F(z) = \frac16|z|^6$. 
The differential of $F$ (in the real sense)  is given by $\dif F(z)h = \abs z^4 \re(z\bar h)$. 
For $s\in[s_\star,s_1]$ and $\phi \in H^1(\Gc)$, we set 
\begin{equation}\label{functional}
\begin{aligned}
G(s,\phi)=\frac12\|\phi\|_{\lambda}^2
+\frac{\gamma\lambda}{2}|\phi(  0)|^2
-\int_{\mathcal G} \big[F(  P+ \phi)-F(  P)-\dif F(  P) \phi \big]\diff   y.
\end{aligned}
\end{equation}
If $\phi \in D_{\gamma \lambda}$, we also have 
\begin{equation*}
\begin{aligned}
G(s,\phi)=\frac12 \scalar{H_{\gamma \lambda} \phi + \phi}{\phi} + \frac \lambda 2 \norm{y \phi}_{L^2}^2
-\int_{\mathcal G} \big[F(  P+ \phi)-F(  P)-\dif F(  P) \phi \big]\diff   y.
\end{aligned}
\end{equation*}
Then our main tool to bootstrap the estimate on $h$ will be the functional 
\begin{equation}\label{Sfunctional}
S(s,h(s)):=\frac{G(s,h(s))}{\lambda^m(s)},
\end{equation}
where $m$ is a large positive integer which will be determined later.

\begin{proposition}\label{coercivity.prop}
There exists $k_0 >0$ and $C > 0$ such that, for $s\in[s_\star,s_1]$, there holds
\begin{equation*}
\frac{1}{\lambda^m}\big( k_0 \|  h\|_{\lambda}^2 - C s^{-2(\kappa-1)} \big) \leq S(s,  h) \leq \frac12\|  h\|_{\lambda}^2 + C s^{-2}\norm{  h}_{H^1}^2.
\end{equation*}
\end{proposition}

\begin{proof}
By~\eqref{eq:dec-b-lambda} we have 
\[
\frac{|\gamma| \lambda}{2} |h(0)|^2  \lesssim s^{-2} \norm{h}_{H^1}^2.
\]
On the other hand, by Corollary~\ref{cor:P-Q}, %
\begin{align*}
F(P+h)-F(P)-\dif F(P)h
&=\frac16|P+h|^6-\frac16|P|^6-\re\left(|P|^4P\bar h\right) \notag \\
&=\frac52Q^4h_1^2+\frac12Q^4h_2^2+O(\lambda |h|)+O(|h|^3).
\end{align*}
Hence
\[
\left|\int_{\mathcal G} \big[F(  P+  h)-F(  P)-\dif F(  P)  h\big]\diff   y \right| \lesssim s^{-2} \norm{h}_{H^1}^2
\]
and the upper bound follows. 

For the lower bound, we use Lemma~\ref{coercivity.lem}, Proposition~\ref{prop:modulation} and Corollary~\ref{cor:P-Q} to write 
\begin{align*}
\frac12\|h\|_{H^1}^2-\int_{\mathcal G} 
&\big[F(P+h)-F(P)
-\dif F(P)h\big]\diff y \\
&=\frac12\Big[ \|h\|_{H^1}^2
-\int_{\mathcal G} (5Q^4h_1^2+Q^4h_2^2)\diff y \Big] + O(s^{-2}\|h\|^2_{L^2}) + O(\|h\|_{H^1}^3) \\
&=\frac12\Big[\la \Lf_+ h_1,h_1\ra+\la \Lf_- h_2,h_2\ra\Big] + O(s^{-2}\|h\|^2_{L^2}) + O(\|h\|_{H^1}^3) \\
&\ge \frac{\mu}{2}\|h\|_{H^1}^2-\frac{1}{2\mu}\left((h,Q)_{L^2}^2 + O(s^{-4}\|h\|_{L^2}^2)\right) 
+ O(s^{-2}\|h\|^2_{L^2}) + O(\|h\|_{H^1}^3).
\end{align*}
By~\eqref{bootstrap_hyp} and~\eqref{almost_orth}, we conclude that
\begin{multline*}
\frac12\|h\|_{H^1}^2
-\int_{\mathcal G} \big[F(P+h)-F(P)-\dif F(P)h\big]\diff y \\
\ge \frac{\mu}{2}\|h\|_{H^1}^2 + O(s^{-2\kappa}) +O(s^{-2(\kappa-1)}) + O(s^{-3(\kappa-2)}),
\end{multline*}
and hence
\begin{equation*}
G(s,h) \ge \frac \mu 2 \|h\|_{\lambda}^2 + O(s^{-2(\kappa-1)}),
\end{equation*}
which completes the proof.
\end{proof}

We shall next estimate the total derivative of $G(s,h(s))$ with respect to $s$.
Note that
$G$ is Fr\'echet differentiable with respect to its second variable 
at any $h \in D_{\gamma \lambda}$, with derivative 
\begin{align} \label{expr_DhH-1}
D_hG = H_{\gamma\lambda}h+h+\lambda y^2h- \big( f(P+h)-f(P) \big).
\end{align}
This can also be written as 
\begin{align}\label{expr_DhH-2}
D_hG = H_{\gamma\lambda}h+h+\lambda y^2h-\dif f(P)h-R(h),
\end{align}
where
\[
R(h)=f(P+h)-f(P)-\dif f(P)h=O(|h|^2).
\]
Since $h(s) \in D_{\gamma \lambda}$ for all $s \in J_{s_1}$ and $h \in C^1(J_{s_1},L^2(\Gc))$, 
we can write
\begin{equation}\label{total_deriv}
\frac{\dif}{\dif s}G(s,h(s))=D_sG(s,h(s))+( D_hG(s,h(s)),h_s).
\end{equation}

\begin{proposition}\label{dhds_below.prop}
There exists a constant $k>0$ such that, for all $s\in[s_\star,s_1]$,
\begin{equation}\label{dhds_below}
\frac{\dif}{\dif s} G(s,h(s)) \ge b\left(-k\|h\|_{\lambda}^2 + O(s^{-2(\kappa-1)})\right).
\end{equation}
\end{proposition}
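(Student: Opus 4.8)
The plan is to expand the total derivative \eqref{total_deriv} and to show that, once the evolution equation \eqref{equ_for_h} is substituted, the dangerous terms either cancel by algebraic structure or are controlled by the bootstrap bounds \eqref{bootstrap_hyp} and the modulation estimate \eqref{mod_est}. First I would compute the $h$-gradient of the functional \eqref{functional}. Grouping the $\frac12\|h'\|_{L^2}^2$ and $\frac{\gamma\lambda}2|h(0)|^2$ terms into the quadratic form of $-\partial_{yy}+\gamma\lambda\delta$ (so that the vertex contributions are absorbed in the weak sense used in \eqref{equ_for_h}), one finds
\[
D_hH(s,h)=\mathcal A h+\lambda y^2h,\qquad \mathcal A h:=-h_{yy}+h+\gamma\lambda\delta h-\big(f(P+h)-f(P)\big),
\]
which is exactly the elliptic part of \eqref{equ_for_h}. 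The partial derivative $D_sH$ produces the coefficient terms $\frac{\lambda_s}2\|yh\|_{L^2}^2+\frac{\gamma\lambda_s}2|h(0)|^2$ together with a remainder $R_1$ coming from the $s$-dependence of $P$ inside the nonlinear integral; since $f(P+h)-f(P)-\dif f(P)h=O(|h|^2)$ and $\partial_s P=O(s^{-2})$ in $C^1_{\exp}$, this remainder satisfies $R_1=O(s^{-2}\|h\|_{L^2}^2)=O(b^2\|h\|_{\lambda}^2)$, which is negligible for a lower bound.

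The crucial step is the treatment of $\la D_hH,h_s\ra$, for which $\|h_s\|$ is \emph{not} controlled by the bootstrap norm $\|\cdot\|_\lambda$. Substituting $\mathcal A h=ih_s+\wt{\mathrm{Mod}_\mathrm{op}}h+\Psi_\kappa+\mathrm{Mod}_\mathrm{op}P-b\big(b+\tfrac{\lambda_s}{\lambda}\big)\tfrac{y^2}2P$ into $D_hH$, the leading interaction $\la ih_s,h_s\ra$ vanishes, because multiplication by $i$ is skew-adjoint and hence the real pairing of $ih_s$ with $h_s$ is zero. This removes the only uncontrolled contribution and leaves $\lambda\la y^2h,h_s\ra$ together with pairings of $h_s$ against $\wt{\mathrm{Mod}_\mathrm{op}}h$, $\Psi_\kappa$, $\mathrm{Mod}_\mathrm{op}P$ and $b(b+\tfrac{\lambda_s}{\lambda})\tfrac{y^2}2P$. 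In each of these I would solve the equation once more for $h_s=-i\mathcal A h+i(\text{mod/source})$ and integrate by parts so as to move the two derivatives contained in $\mathcal A$ onto the polynomial or exponentially decaying weights. The integrands $\im\int y^2|h_y|^2$, $\im\int y^2|h|^2$ and $\im\int y^4|h|^2$ so produced are purely real and therefore vanish, so that no uncontrolled weighted norm survives; the genuine cross terms reduce, for the virial contribution, to $\lambda\,\im\int y\bar h\,h_y=O(\sqrt\lambda\,\|h\|_\lambda^2)=O(b\|h\|_\lambda^2)$, and for the $\wt{\mathrm{Mod}_\mathrm{op}}h$ contribution to quadratic expressions whose modulation coefficients are of size $O(s^{-2})$, again bounded by $O(b\|h\|_\lambda^2)$.

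For the leading coefficient term I would insert $\tfrac{\lambda_s}{\lambda}=-b+O(s^{-\kappa})$, which follows from \eqref{mod_est}, to write $\tfrac{\lambda_s}2\|yh\|^2=-\tfrac{b\lambda}2\|yh\|^2+O(s^{-\kappa}\|h\|_\lambda^2)\ge -\tfrac b2\|h\|_\lambda^2+b\,o(\|h\|_\lambda^2)$, using $\lambda\|yh\|^2\le\|h\|_\lambda^2$ and $b\sim 2/s>0$ on $[s_\star,s_1]$. The term $\tfrac{\gamma\lambda_s}2|h(0)|^2=-\tfrac{\gamma b\lambda}2|h(0)|^2+\dots$ has a \emph{favorable} sign because $\gamma<0$ and $b>0$, so it may simply be dropped in the lower bound, up to a negligible error controlled by the trace inequality $|h(0)|^2\lesssim\|h\|_{H^1}^2$. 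The linear-in-$h$ error pairings with $\mathrm{Mod}_\mathrm{op}P$, $\Psi_\kappa$ and $b(b+\tfrac{\lambda_s}{\lambda})\tfrac{y^2}2P$ are of size $\lesssim s^{-\kappa}\|h\|_\lambda$ by \eqref{mod_est} and \eqref{remainder_est}; splitting them with Young's inequality at weight $b$ gives $\lesssim b\|h\|_\lambda^2+b^{-1}s^{-2\kappa}=b\big(\|h\|_\lambda^2+O(s^{-2(\kappa-1)})\big)$, which is exactly the admissible form. Collecting all contributions yields $\frac{\dif H}{\dif s}\ge b\big(-k\|h\|_\lambda^2+O(s^{-2(\kappa-1)})\big)$. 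I expect the main obstacle to be precisely the bookkeeping of the $\la\cdot,h_s\ra$ pairings: one must repeatedly exploit the algebraic structure (skew-adjointness of $i$, selfadjointness of $\mathcal A$, and the vanishing of imaginary parts of real integrands) to prevent any appearance of the uncontrolled quantity $\|h_s\|$ or of the uncontrolled weight $\int y^4|h|^2$, while keeping every surviving term at the sharp size $O(b\|h\|_\lambda^2)$ or $b\,O(s^{-2(\kappa-1)})$.
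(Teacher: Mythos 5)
Your proposal is correct and follows essentially the same route as the paper's proof: the splitting $\frac{\dif H}{\dif s}=D_sH+\langle D_hH,h_s\rangle$, the skew-adjointness cancellation (your $\re\langle ih_s,h_s\rangle=0$ is the paper's $\langle iD_hH,D_hH\rangle=0$ viewed from the other side), the integration by parts in the virial pairing that kills the purely real integrands and leaves $2\im\int yh_y\bar h\lesssim\sqrt{\lambda}\,\norm{h}_\lambda^2\sim b\norm{h}_\lambda^2$, and the modulation estimate $\lambda_s=-b\lambda+O(s^{-\kappa})$ in $D_sH$. The only deviations are cosmetic: you close the $\mathrm{Mod}_\mathrm{op}P$ and $\Phi_\kappa$ pairings by Young's inequality at weight $b$ (giving $b\norm{h}_\lambda^2+b^{-1}s^{-2\kappa}=b\,O(s^{-2(\kappa-1)})$) where the paper inserts the bootstrap bound $\norm{h}_\lambda\lesssim s^{-(\kappa-2)}$ directly, and you drop the favorably signed vertex term $-\tfrac{\gamma b\lambda}{2}|h(0)|^2$ where the paper simply bounds it via the trace inequality; your size $O(s^{-2})$ for $\partial_sP$ is a harmless overestimate of the true $O(s^{-3})$.
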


The proof of Proposition~\ref{dhds_below.prop} breaks down into several lemmas.

\begin{remark}
Upon close inspection of the estimates involved, one notices that \eqref{dhds_below} holds for any $k$ verifying
$$
k>1+\max\{1,4/\alpha_\star\}+\max\{2,\alpha_\star/2\}.
$$
\end{remark}

\begin{lemma}\label{dsH.lem}
There exists $k_1>0$ such that, for all $s\in[s_\star,s_1]$,
\begin{equation*}
D_sG(s,h(s))\ge -b\,k_1\|h\|_{\lambda}^2. \quad %
\end{equation*}
\end{lemma}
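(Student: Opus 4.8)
The plan is to differentiate the functional \eqref{functional} in $s$ with $h$ held fixed, in the sense of the partial derivative $D_sH$ appearing in \eqref{total_deriv}. Since $\tfrac12\|h\|_{H^1}^2$ carries no explicit $s$-dependence, only the $\lambda$-weights and the profile $P=P_{b,\lambda}$ inside the nonlinear integral contribute. Using that the Hessian $\dif f(P)$ is symmetric for the real structure, so that $\partial_s\big(\dif F(P)h\big)=\re\big(\overline{\dif f(P)h}\,P_s\big)$, a short computation gives
\[
D_sH(s,h)=\frac{\lambda_s}{2}\|yh\|_{L^2}^2+\frac{\gamma\lambda_s}{2}\abs{h(0)}^2-\int_{\mathcal G}\re\big(\overline{R(P,h)}\,P_s\big)\diff y,
\]
where $R(P,h):=f(P+h)-f(P)-\dif f(P)h$ obeys the pointwise bound $\abs{R(P,h)}\lesssim(\abs{P}^3+\abs{h}^3)\abs{h}^2$. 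The strategy is to show that the first term produces the advertised negative multiple of $b$, while the remaining two terms are of strictly smaller order $o(b)\|h\|_\lambda^2$.

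First I would treat the dominant term. By the modulation estimate \eqref{mod_est}, $\lambda_s/\lambda=-b+O(s^{-\kappa})$, hence $\lambda_s=-b\lambda+O(\lambda s^{-\kappa})$. Using $\lambda\|yh\|_{L^2}^2\le\|h\|_\lambda^2$, this yields
\[
\frac{\lambda_s}{2}\|yh\|_{L^2}^2\ge-\frac b2\,\lambda\|yh\|_{L^2}^2-Cs^{-\kappa}\|h\|_\lambda^2\ge-\frac b2\|h\|_\lambda^2-Cs^{-\kappa}\|h\|_\lambda^2.
\]
Since $b\sim2/s$ and $\kappa\ge7$, we have $s^{-\kappa}=o(b)$, so the error is absorbed once $s_0$ is large.

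For the boundary term, the trace inequality $\abs{h(0)}^2\le\|h\|_{H^1}^2$ together with $\abs{\lambda_s}\lesssim b\lambda$ gives $\big|\tfrac{\gamma\lambda_s}{2}\abs{h(0)}^2\big|\lesssim b\lambda\|h\|_\lambda^2=o(b)\|h\|_\lambda^2$ (in fact this term is nonnegative, as $\gamma<0$ and $\lambda_s<0$). For the nonlinear integral, the key point is that $\|P_s\|_{L^\infty}\lesssim b\lambda$: differentiating \eqref{def:P} term by term and using $\abs{b_s}\lesssim\lambda\lesssim b^2$ and $\abs{\lambda_s}/\lambda\lesssim b$ from \eqref{mod_est} and \eqref{bootstrap_hyp}, each summand $\partial_s\big[(ib)^j\lambda^kP_{j,k}\big]$ is $O\big(\abs{b}^{\,j+1}\lambda^k\big)$, the largest being the $(j,k)=(0,1)$ contribution $\lambda_sP_{0,1}=O(b\lambda)$. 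Combining this with $\abs{R(P,h)}\lesssim(\abs{P}^3+\abs{h}^3)\abs{h}^2$, the exponential decay of $P$, the embedding $H^1\hookrightarrow L^\infty$, and the smallness $\|h\|_\lambda\lesssim s^{-(\kappa-2)}$ from \eqref{bootstrap_hyp}, I obtain
\[
\Big|\int_{\mathcal G}\re\big(\overline{R(P,h)}\,P_s\big)\diff y\Big|\lesssim b\lambda\big(\|h\|_{L^2}^2+\|h\|_{H^1}^5\big)\lesssim b\lambda\|h\|_\lambda^2=o(b)\|h\|_\lambda^2.
\]
Summing the three contributions and taking $s_0$ large enough gives $D_sH\ge-bk_1\|h\|_\lambda^2$ with, say, $k_1=\tfrac12+C$.

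The main obstacle is the bookkeeping in this last step: one must verify that every term of $P_s$ and of the nonlinear remainder genuinely carries a factor $b\lambda$, so that both the boundary and nonlinear contributions are of order $o(b)\|h\|_\lambda^2$, strictly smaller than the leading virial term $-\tfrac b2\|h\|_\lambda^2$. This rests on two structural facts: $P$ is $s$-independent at leading order (namely $P=Q+O(\lambda)$ with $Q$ fixed), and the modulation estimate \eqref{mod_est} forces $b_s$ and $\lambda_s$ to be small. Getting the precise sign and size of $\tfrac{\lambda_s}{2}\|yh\|_{L^2}^2$ right, which is what produces the crucial \emph{negative} multiple of $b$, is the heart of the matter.
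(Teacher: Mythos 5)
Your proof is correct and follows essentially the same route as the paper: the same decomposition of $D_sH$ into the virial, boundary and nonlinear terms, the same symmetry cancellation of the Hessian $\dif f(P)$ leaving only $\re\big(\overline{R_Q(h)}\,\partial_s P\big)$, and the same inputs $\lambda_s=-b\lambda+O(s^{-(\kappa+2)})$ and $\|\partial_s P\|_{L^\infty}\lesssim b\lambda$ from \eqref{mod_est} and \eqref{bootstrap_hyp}. The only (harmless) difference is that you track the sign of the leading term $\tfrac{\lambda_s}{2}\|yh\|_{L^2}^2$ to get a one-sided bound with constant near $\tfrac12$, whereas the paper simply bounds $\abs{D_sH}\lesssim s^{-1}\big(\|h\|_{H^1}^2+s^{-2}\|yh\|_{L^2}^2\big)\lesssim b\,\|h\|_{\lambda}^2$ in absolute value.
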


\begin{proof}
We have
\begin{equation*}
D_s G=\frac{\lambda_s}{2} \|yh\|_{L^2}^2+\frac{\gamma\lambda_s}{2}|h(0)|^2
-D_s\int_{\mathcal G} \big[F(P+h)-F(P)
-\dif F(P)h\big]\diff y.
\end{equation*}
By~\eqref{eq:b-s-lambda-s} and~\eqref{mod_est} we have $\lambda_s = O(s^{-3})$, so
\begin{equation*}
\frac{|\lambda_s|}{2} \|yh\|_{L^2}^2+\frac{\gamma|\lambda_s|}{2}|h(0)|^2
\lesssim s^{-3}\|yh\|_{L^2}^2+s^{-3}\|h\|_{H^1}^2.
\end{equation*}
The expression
\[
F(P+h)-F(P)
-\dif F(P)h
\]
is composed of a collection of polynomial terms in $P$, $\bar P$, $h$ and $\bar h$, all of which of order $6$, and at least of order $2$ in $(h,\bar h)$.
As a consequence, we may apply $D_s$ to get
\begin{equation*}
D_s\int_{\mathcal G} \big[F(P+h)-F(P)-\dif F(P)h\big]\diff y
=O(\lambda_s\|h\|_{H^1}^2)= O( s^{-3}\|h\|_{H^1}^2).
\end{equation*}
Thus,
\begin{equation*}
|D_s G|\lesssim s^{-3}\|yh\|_{L^2}^2+s^{-3}\|h\|_{H^1}^2
\lesssim s^{-1}\left(\|h\|_{H^1}^2+s^{-2}\|yh\|_{L^2}^2\right).
\end{equation*}
The result now follows from~\eqref{eq:dec-b-lambda}.
\end{proof}

We next estimate the second term $\scalar{D_hG}{h_s} = \scalar{iD_h G}{ih_s}$ in~\eqref{total_deriv}. We rewrite~\eqref{equ_for_h} as
\begin{equation*}
ih_s
=D_hG-\lambda y^2h-\tModop(s)h
-\Modop(s)P+\Phi_\kappa, 
\end{equation*}
where, by~\eqref{remainder_est} and~\eqref{mod_est}, 
\begin{equation}\label{phiK_est}
\Phi_\kappa =
b\Big(b+\frac{\lambda_s}{\lambda}\Big)\frac{y^2}{2}P-\Psi_\kappa(s)
=O_{\Sigma^1(\mathcal G)}(s^{-(\kappa+1)}).
\end{equation}
Since $(iD_hG,D_hG)=0$, we have
\begin{equation}\label{D_hHh_s}
( D_hG,h_s)=-\lambda ( iD_hG,y^2h)-(iD_hG,\tModop(s)h)
-( iD_hG,\Modop(s)P)+(iD_hG,\Phi_\kappa).
\end{equation}

The main contributions in the right-hand side of~\eqref{D_hHh_s} are estimated as follows (in~\eqref{highorder2} we estimate a contribution coming from the term $(iD_hG,\tModop(s)h)$).

\begin{lemma}\label{DhHy2h}
For $s\in[s_\star,s_1]$, we have 
\begin{equation}\label{highorder1}
|\lambda( iD_hG,y^2h)| \lesssim s^{-1}\|h\|_{\lambda}^2
\end{equation}
and
\begin{equation}\label{highorder2}
\Big|\frac14\Big(b_s-b^2-2b\frac{\lambda_s}{\lambda}\Big)( iD_hG,y^2h)\Big| 
\lesssim s^{-1}\|h\|_{\lambda}^2.
\end{equation}
\end{lemma}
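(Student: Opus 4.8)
The plan is to reduce both inequalities to the single estimate
\[
\big|\la iD_hH,y^2h\ra\big|\lesssim \lambda^{-1/2}\|h\|_{\lambda}^2.
\]
Indeed, multiplying by $\lambda$ gives $\lambda^{1/2}\|h\|_\lambda^2\sim s^{-1}\|h\|_\lambda^2$, which is exactly \eqref{highorder1}; and \eqref{highorder2} will follow once I check that its scalar prefactor is of size $O(\lambda)$. To establish the displayed bound I would substitute the explicit expression for $D_hH$ and pair each summand separately with $iy^2h$, using that $(\cdot,\cdot)$ is the real $L^2$ inner product, so that $(iv,w)=-\im\int v\bar w$ extracts an imaginary part.

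First I would dispose of the three multiplication terms $h$, $\lambda y^2h$ and $\gamma\lambda\delta h$. For the first two, the corresponding integrands $y^2|h|^2$ and $\lambda y^4|h|^2$ are real, so their imaginary parts vanish; for the delta term, the test function $y^2h$ vanishes at the vertex, whence $\la i\gamma\lambda\delta h,y^2h\ra=0$. Thus only the Laplacian and the two nonlinear contributions survive.

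The Laplacian term is the main one. Integrating by parts on each edge, the boundary contribution at the vertex is annihilated by the $y^2$ weight (so no vertex condition is needed), and one finds $\la i(-\partial_y^2 h),y^2h\ra=-2\im\int y\,\partial_y h\,\bar h$, since the term $\int y^2|\partial_y h|^2$ is real. By Cauchy--Schwarz this is bounded by $2\|\partial_y h\|_{L^2}\|yh\|_{L^2}$, and since $\|yh\|_{L^2}\le \lambda^{-1/2}\|h\|_\lambda$ by the very definition \eqref{norm_lambda} of $\|\cdot\|_\lambda$, it contributes precisely the claimed $\lambda^{-1/2}\|h\|_\lambda^2$; this is the term that saturates the estimate.

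For the nonlinear terms, $\dif f(P_{b,\lambda})h$ carries a factor $\sim Q^4$ and so decays exponentially, whence $y^2\,\dif f(P_{b,\lambda})h$ is bounded and the pairing is $O(\|h\|_\lambda^2)$; likewise every part of $R_Q(h)$ carrying a power of $P$ is harmless, while its only non-localized piece, the quintic $|h|^4h$, is controlled through $\int y^2|h|^6\lesssim \|h\|_{L^\infty}^4\|yh\|_{L^2}^2\lesssim \lambda^{-1}\|h\|_\lambda^6$, which is negligible thanks to the smallness of $\|h\|_\lambda$ from \eqref{bootstrap_hyp}. This completes the bound on $\la iD_hH,y^2h\ra$, hence \eqref{highorder1}. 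For \eqref{highorder2} I would then invoke the modulation estimate \eqref{mod_est} to write $b_s-b^2-2b\lambda_s/\lambda=\alpha+O(s^{-\kappa})$, together with $\alpha=O(\lambda)$ (its leading coefficient being $\beta\lambda$, see \eqref{def:alpha}), so that the prefactor is $O(\lambda)=O(s^{-2})$; combined with $|\la iD_hH,y^2h\ra|\lesssim \lambda^{-1/2}\|h\|_\lambda^2\sim s\|h\|_\lambda^2$ this again yields $s^{-1}\|h\|_\lambda^2$. The only point requiring genuine care---hence the main obstacle---is the bookkeeping that isolates these reality and vertex cancellations and confirms that the non-localized quintic remainder remains strictly lower order; the remaining steps are routine.
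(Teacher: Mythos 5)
Your proof is correct and follows essentially the same route as the paper: discard the pairings with zero real part (including the $\delta$ term, killed at the vertex by the $y^2$ weight), integrate the Laplacian by parts to reach $2\im\int_{\mathcal G} y h_y \bar h \diff y$, bound it by Cauchy--Schwarz against the $\lambda$-weighted norm, control the nonlinear terms via the exponential decay of $Q$ and Sobolev embedding, and deduce \eqref{highorder2} from $b_s-b^2-2b\lambda_s/\lambda=O(\lambda)$, which rests on \eqref{mod_est} and $\alpha=O(\lambda)$ exactly as the paper indicates. If anything, you are slightly more careful than the paper's written argument, which bounds $\la R_Q(h),y^2h\ra$ only by $\int_{\mathcal G} Q^3y^2\abs{h}^3\diff y$ and leaves implicit the non-localized quintic contribution $\int_{\mathcal G} y^2\abs{h}^6\diff y$ that you estimate explicitly and show is negligible under \eqref{bootstrap_hyp}.
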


\begin{proof}
We begin with~\eqref{highorder1}. 
Discarding inner products whose real part is zero, we have 
\begin{align*}
( iD_hG,y^2h)
&=-( H_{\gamma\lambda}h+h+\lambda y^2h-\dif f(P)h-R(h),iy^2h) \\
&=-( H_{\gamma\lambda}h-\dif f(P)h-R(h),iy^2h)\\
&=-(\partial_yh,i\partial_y(y^2h))+( \dif f(P)h+R(h),iy^2h).
\end{align*}
First, 
\begin{equation*}
\big|(\partial_yh,i\partial_y(y^2h))\big|= 
\big|2(\partial_yh,iyh)\big|
\le 2s^{1/2}\|h_y\|_{L^2}s^{-1/2}\|yh\|_{L^2}\le s\|h_y\|_{L^2}^2+s^{-1}\|yh\|_{L^2}^2.
\end{equation*}
On the other hand,
\begin{equation*}
|( \dif f(P)h,y^2h)| \lesssim \|h\|_{L^2}^2
\end{equation*}
and 
\begin{equation*}
|( R(h),y^2h)|\lesssim \int_{\mathcal G}(|P|^3y^2|h|^3+y^2|h|^6)\diff y\lesssim \norm{h}_{H^1}^3+\norm{h}_{H^1}^4\norm{yh}_{L^2}^2.
\end{equation*}
 It follows that
\begin{align*}
|\lambda( iD_hG,y^2h)| 
&\lesssim
\lambda\left(s\|h_y\|_{L^2}^2+s^{-1}\|yh\|_{L^2}^2+\|h\|_{L^2}^2+\|h\|_{H^1}^3+\norm{h}_{H^1}^4\norm{yh}_{L^2}^2\right) \\
&\lesssim
s^{-1}\left(\|h_y\|_{L^2}^2+\lambda\|yh\|_{L^2}^2+\|h\|_{L^2}^2\right)+s^{-2}\|h\|_{H^1}^3 \\
&\lesssim
s^{-1}\left(\|h\|_{H^1}^2+\lambda\|yh\|_{L^2}^2\right),
\end{align*}
which concludes the proof of~\eqref{highorder1}. The estimate~\eqref{highorder2} follows by the same
arguments using
$$
b_s-b^2-2b\frac{\lambda_s}{\lambda} = O(\lambda). 
$$
\end{proof}

We now estimate the other contributions in~\eqref{D_hHh_s}.

\begin{lemma}\label{DhHModh}
For $s\in[s_\star,s_1]$,
\begin{equation*}
|( iD_hG,\tModop(s)h)| \lesssim 
s^{-1}\|h\|_{\lambda}^2.
\end{equation*}
\end{lemma}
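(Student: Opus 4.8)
The plan is to split the modulation operator as
\[
\wt{\mathrm{Mod}_\mathrm{op}}(s)h=\mathrm{Mod}_\mathrm{op}(s)h+\alpha\frac{y^2}{4}h-b\Big(b+\frac{\lambda_s}{\lambda}\Big)\frac{y^2}{2}h,
\]
and to estimate the pairing of $iD_hH$ with each resulting piece separately. Every piece is a scalar multiple of $h$, $y^2h$ or $\Lambda h$, and the scalar coefficients are controlled by Lemma~\ref{mod_est.lem}: the three entries of $\mathrm{Mod}(s)$, namely $1-\theta_s$, $b_s+b^2-\alpha$ and $b+\lambda_s/\lambda$, are $O(s^{-\kappa})$ by \eqref{mod_est}, while $\alpha=O(\lambda)=O(s^{-2})$ is the only coefficient that does not decay like $s^{-\kappa}$. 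Throughout I freely use the bootstrap bounds \eqref{bootstrap_hyp}, the asymptotics $b=O(s^{-1})$ and $\lambda=O(s^{-2})$, and the fact that real self-adjoint operators contribute nothing after pairing with $iD_hH$ (as in the proof of Lemma~\ref{DhHy2h}, where one discards duality products whose real part vanishes).

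First I would treat the three terms coming from $\mathrm{Mod}_\mathrm{op}(s)h$. For $(1-\theta_s)h$, pairing $iD_hH$ with $h$ annihilates the real self-adjoint part $-\partial_y^2h+h+\lambda y^2h+\gamma\lambda\delta h$ of $D_hH$, leaving only the nonlinear contributions $-\dif f(P)h-R_Q(h)$, which are bounded by $\|h\|_{L^2}^2+\|h\|_{H^1}^3\lesssim\|h\|_{\lambda}^2$; the $O(s^{-\kappa})$ prefactor then gives a bound $\lesssim s^{-1}\|h\|_{\lambda}^2$. For $-i(b+\lambda_s/\lambda)\Lambda h$, I would use $\la iD_hH,i\Lambda h\ra=\la D_hH,\Lambda h\ra$ and estimate $|\la D_hH,\Lambda h\ra|\lesssim\|h\|_{\lambda}^2$ by integration by parts, the main inputs being $\la h,\Lambda h\ra=0$, $\la\lambda y^2h,\Lambda h\ra=-\lambda\|yh\|_{L^2}^2$ and $|\la-\partial_y^2h,\Lambda h\ra|\lesssim\|h_y\|_{L^2}^2$, after which the $O(s^{-\kappa})$ coefficient is absorbed. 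For $(b_s+b^2-\alpha)\frac{y^2}{4}h$, I would invoke the bound on $|\la iD_hH,y^2h\ra|$ established inside the proof of Lemma~\ref{DhHy2h}, namely $|\la iD_hH,y^2h\ra|\lesssim s\|h_y\|_{L^2}^2+s^{-1}\|yh\|_{L^2}^2+\|h\|_{L^2}^2+\|h\|_{H^1}^3$; multiplying by $O(s^{-\kappa})$ and using $\|yh\|_{L^2}^2\lesssim\lambda^{-1}\|h\|_{\lambda}^2\sim s^2\|h\|_{\lambda}^2$ yields a bound $\lesssim s^{-(\kappa-1)}\|h\|_{\lambda}^2\le s^{-1}\|h\|_{\lambda}^2$ since $\kappa\ge7$.

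The genuinely delicate piece is $\alpha\frac{y^2}{4}h$, because $\alpha$ only decays like $\lambda\sim s^{-2}$ and therefore cannot absorb the factor $s^2$ lost in $\|yh\|_{L^2}^2\lesssim s^2\|h\|_\lambda^2$ the way the $O(s^{-\kappa})$ coefficients can. This is exactly the situation addressed by the weighted virial estimate \eqref{highorder1}: since $\alpha=O(\lambda)$,
\[
\Big|\la iD_hH,\alpha\tfrac{y^2}{4}h\ra\Big|\lesssim\lambda\,\big|\la iD_hH,y^2h\ra\big|\lesssim s^{-1}\|h\|_{\lambda}^2.
\]
The remaining piece $-b(b+\lambda_s/\lambda)\frac{y^2}{2}h$ has coefficient of size $O(s^{-(\kappa+1)})$ and is estimated exactly as the $y^2h$ term above, with room to spare. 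Summing the four contributions gives the claimed bound. The main obstacle is precisely the $\alpha y^2h$ term: one must resist bounding $|\la iD_hH,y^2h\ra|$ crudely (which would cost a factor $s$), and instead keep the $\lambda$-weight so that \eqref{highorder1} applies; all the other pieces are harmless because their coefficients decay like $s^{-\kappa}$.
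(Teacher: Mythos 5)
Your proposal is correct and follows essentially the same route as the paper: the paper simply regroups your three $y^2$-pieces into the single term $\frac14\big(b_s-b^2-2b\frac{\lambda_s}{\lambda}\big)\la iD_hH,y^2h\ra$, whose coefficient is $O(\lambda)$, and disposes of it via \eqref{highorder2} of Lemma~\ref{DhHy2h} --- precisely the weighted virial estimate \eqref{highorder1} you invoke for the $\alpha\frac{y^2}{4}h$ piece --- while the $h$ and $\Lambda h$ pairings are handled by the same cancellations and integrations by parts, with coefficients controlled by \eqref{mod_est}. The only overstated intermediate step is your claim $|\la D_hH,\Lambda h\ra|\lesssim\|h\|_{\lambda}^2$: the contribution $\la f(P+h)-f(P),\Lambda h\ra$, absent from your list of inputs, produces a term of size $\|h_y\|_{L^2}\|yh\|_{L^2}$, so that the paper only obtains $|\la iD_hH,i\Lambda h\ra|\lesssim s^{2}\|h\|_{\lambda}^2$ using $\|yh\|_{L^2}^2\lesssim\lambda^{-1}\|h\|_{\lambda}^2$; this loss is harmless, however, because the prefactor $b+\lambda_s/\lambda=O(s^{-\kappa})$ absorbs it exactly as in \eqref{est3}, so your conclusion stands.
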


\begin{proof}
We have 
\begin{equation*}
( iD_hG,\tModop(s)h) 
=(1-\theta_s)( iD_hG,h)
+\frac14\Big(b_s-b^2-2b\frac{\lambda_s}{\lambda}\Big)( iD_hG,y^2h)
-\Big(b+\frac{\lambda_s}{\lambda}\Big)( iD_hG,i\Lambda h).
\end{equation*}
Note that the second term has already been dealt with 
in Lemma~\ref{DhHy2h}. Recall that $h\in \Sigma^2$, hence $\Lambda h\in L^2$.
Now,
\begin{align*}
( iD_hG,h) = - ( D_hG,ih)
&= -( H_{\gamma\lambda}h+h+\lambda y^2h - \dif f(P)h-R(h),ih) \\
&=( \dif f(P)h+R(h),ih)
\end{align*}
so, by~\eqref{mod_est},
\begin{equation}\label{est1}
|(1-\theta_s)( iD_hG,h)| \lesssim s^{-\kappa}\|h\|_{H^1}^2.
\end{equation}
Next,
\begin{align}\label{eq:DGLh}
( iD_hG,i\Lambda h) 
&= ( D_hG,\Lambda h) 
= ( H_{\gamma\lambda}h+h+\lambda y^2h -[f(P+h)-f(P)],\Lambda h).  
\end{align}
Integrating by parts, we find that
\begin{align*}
    (H_{\gamma\lambda}h,\Lambda h)=
\Big(H_{\gamma\lambda}h,\frac h2+yh_y\Big)&=
\frac12(H_{\gamma\lambda}h,h)
-
\re\int_\mathcal{G}h_{yy}y\overline{h}_y\diff y \\
&
=\frac12\norm{h_y}_{L^2}^2+\frac{\gamma\lambda}{2}|h(0)|^2
+ \frac12\|h_y\|_{L^2}^2.
\end{align*}
Further integrations by parts give
$$
\re\int_\mathcal{G}h\overline{\Lambda h}\diff y=0
,\quad
\re\int_\mathcal{G} y^2 h \overline{\Lambda h}\diff y = -\int_\mathcal{G}y^2|h|^2\diff y.
$$
Hence, 
\begin{equation} \label{eq:DG-Lambda-h}
\big| ( H_{\gamma\lambda}h+h+\lambda y^2h ,ih) \big| \lesssim \norm h_\lambda^2.
\end{equation}
Next, integrating by parts,
\begin{eqnarray*}
\lefteqn{( f(P+h)-f(P),\Lambda h)}\\
&&=\frac12\re\int_{\mathcal G} \big(f(P+h)-f(P)\big)\bar h\diff y + \re\int_{\mathcal G} \big(f(P+h)-f(P)\big)y\overline{h_y}\diff y\\
&&=-\frac12\re\int_{\mathcal G} \big(f(P+h)-f(P)\big)\bar h\diff y
-\re\int_{\mathcal G} y \partial_y \big(f(P+h)-f(P)\big) \overline{h}\diff y.
\end{eqnarray*}
For the second term, we have
\begin{align*}
\int_{\mathcal G} y \partial_y \big(f(P+h)-f(P)\big) \overline{h}\diff y
&=\int_{\mathcal G} y\big(\dif f(P+h)(P_y+h_y)-\dif f(P)P_y\big)\overline{h}\diff y \\
&=\int_{\mathcal G} y\big(\dif f(P+h)-\dif f(P)\big)P_y\overline{h}\diff y + \int_{\mathcal G} y\dif f(P+h)h_y\bar h\diff y.
\end{align*}
We estimate 
\begin{equation*}
\Big| \int_{\mathcal G} \big(f(P+h)-f(P)\big)\bar h \diff y\Big| 
\le \|f(P+h)-f(P)\|_{L^2} \|h\|_{L^2}
\lesssim \|h\|_{H^1}^2
\end{equation*}
and
\begin{eqnarray*}
\lefteqn{\Big|\int_{\mathcal G} y\big(f(P+h)-f(P)\big)_y\overline{h}\diff y \Big|}\\
&&\le \|yP_y\|_{L^\infty}\|\dif f(P+h)-\dif f(P)\|_{L^2} \|h\|_{L^2} + \|\dif f(P+h)\|_{L^\infty}\|h_y\|_{L^2} \|yh\|_{L^2} \\
&&\lesssim \|h\|_{H^1}^2+\|yh\|_{L^2}^2.
\end{eqnarray*}
We conclude that
\begin{equation*}
|( f(P+h)-f(P),\Lambda h)| \lesssim s^2\|h\|_{\lambda}^2.
\end{equation*}
With~\eqref{eq:DGLh},~\eqref{eq:DG-Lambda-h} and~\eqref{mod_est} we get
\begin{equation}\label{est3}
\Big|\Big(b+\frac{\lambda_s}{\lambda}\Big)( iD_hG,i\Lambda h)\Big|
\lesssim s^{-(\kappa-2)}\|h\|_{\lambda}^2,
\end{equation}
and the result follows from~\eqref{highorder2},~\eqref{est1} and~\eqref{est3}, 
provided $\kappa \geq 3$.
\end{proof}

\begin{lemma}\label{DhHModQ}
For $s\in[s_\star,s_1]$,
\begin{equation*}
|( iD_hG,\Modop(s)P)| 
\lesssim s^{-2\kappa}.
\end{equation*}
\end{lemma}

\begin{proof}
By Corollary~\ref{cor:P-Q} and~\eqref{mod_est}, we have 
\begin{equation}\label{eq:QP_on_the_right}
|( iD_hG,\Modop(s)(P-Q))| \lesssim \norm{h}_{H^1} \norm{\Modop(s)(P-Q)}_{H^1} \lesssim s^{-2\kappa}. 
\end{equation}
Furthermore,
\begin{equation*}
( iD_hG,\Modop(s)Q)
=(1-\theta_s)( iD_hG,Q)
+\frac14\Big(b_s-b^2-2b\frac{\lambda_s}{\lambda}-\alpha\Big)( iD_hG,y^2Q)
-\Big(b+\frac{\lambda_s}{\lambda}\Big)( iD_hG,i\Lambda Q).
\end{equation*}
Let us recall here the notation $h = h_1 + ih_2$. 
By Corollary~\ref{cor:P-Q} and~\eqref{bootstrap_hyp}, we have 
\begin{align*}
( iD_hG,Q) 
& = - ( D_hG,iQ) = -( H_{\gamma\lambda}h+h-\dif f(P)h+\lambda y^2h-R(h),iQ) \\
&= -( H_{\gamma\lambda}h+h-\dif f(Q)h+\lambda y^2h-R(h),iQ) 
+O(s^{-2}\norm{h}_{L^2})
\\
&=-( h_2,\Lop_-Q)-\gamma\lambda \im h(0)Q(0) - \lambda \im\int_{\mathcal G} y^2h Q\diff y
 +O(s^{-\kappa}).%
\end{align*} 
Since $\Lop_-Q=0$, it follows by~\eqref{mod_est},~\eqref{eq:dec-b-lambda} and~\eqref{bootstrap_hyp} that
\begin{align}\label{est1'}
|(1-\theta_s)( iD_hG,Q)| \lesssim 
s^{-\kappa}\big(s^{-2}\|h\|_{H^1} + s^{-\kappa} \big)
\lesssim s^{-2\kappa}.
\end{align}
Next, by Lemma~\ref{Lpm.lem},
\begin{align*}
( iD_hG,y^2Q) = - ( D_hG,iy^2Q)
&= -( H_{\gamma\lambda}h+h-\dif f(P)h+\lambda y^2h
-R(h),iy^2Q) \\
&=-( h_2,\Lop_-y^2Q) - \lambda \im\int_{\mathcal G} y^2h y^2Q\diff y
+O(s^{-\kappa})\\
&=4( h_2,\Lambda Q)- O(s^{-2} \norm h_{L^2})
+O(s^{-\kappa}) .
\end{align*}
Since $( h_2,\Lambda Q)=O(s^{-2}\|h\|_{L^2})$ by Proposition~\ref{prop:modulation} and Corollary~\ref{cor:P-Q}, 
it follows that
\begin{align}\label{est2'}
\Big|\frac14\Big(b_s-b^2-2b\frac{\lambda_s}{\lambda}-\alpha\Big)( iD_hG,y^2Q)\Big|
&%
\lesssim s^{-2\kappa}.
\end{align}
Finally, by~\eqref{bootstrap_hyp} and~\eqref{almost_orth},
\begin{align*}
( iD_hG,i\Lambda Q) 
&= ( D_hG,\Lambda Q) \\
&= ( H_{\gamma\lambda}h+h-\dif f(P)h+\lambda y^2h -R(h),\Lambda Q)  \\
&=( h_1, L_+\Lambda Q)
+\gamma\lambda \re h(0)\Lambda Q(0) 
+\lambda \re\int_{\mathcal G} y^2h \Lambda Q\diff y
+ O(s^{-\kappa})\\
&=-2(h,Q)_{L^2}+ O(s^{-\kappa}).
\end{align*}
Using~\eqref{almost_orth}, this implies
\begin{align}\label{est3'}
\Big|\Big(b+\frac{\lambda_s}{\lambda}\Big)( iD_hG,i\Lambda Q)\Big|
\lesssim  s^{-2\kappa}.
\end{align}
The result now follows by combining estimates \eqref{eq:QP_on_the_right},
\eqref{est1'},~\eqref{est2'} and~\eqref{est3'}.
\end{proof}

\begin{lemma}\label{DhHffi}
For $s\in[s_\star,s_1]$,
\begin{equation*}
|( iD_hG,\Phi_\kappa)| \lesssim s^{-(2\kappa-1)}.
\end{equation*}
\end{lemma}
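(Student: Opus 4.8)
The plan is to expand $D_hH$ using the decomposition recalled in the proof of Lemma~\ref{DhHModQ},
\[
D_hH=L_+h_1+iL_-h_2+\lambda y^2h+\gamma\lambda\delta h-R_Q(h),\qquad h=h_1+ih_2,
\]
and to bound the pairing of each of the five resulting terms with $\Phi_\kappa$ separately, writing $\la iD_hH,\Phi_\kappa\ra=-\la D_hH,i\Phi_\kappa\ra$. The three inputs I would use throughout are the bootstrap bound $\|h\|_\lambda\lesssim s^{-(\kappa-2)}$ from \eqref{bootstrap_hyp}, the remainder bound $\|\Phi_\kappa\|_\Sigma\lesssim s^{-(\kappa+1)}$ from \eqref{phiK_est}, and the asymptotics $b\lesssim s^{-1}$, $\lambda\lesssim s^{-2}$ valid for large $s$.

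The sharp contribution comes from the linear terms. Since $L_\pm$ are bounded from $H^1_{\mathrm{rad}}$ to $H^{-1}_{\mathrm{rad}}$, the $H^{-1}$-$H^1$ duality gives
\[
|\la L_+h_1,i\Phi_\kappa\ra|+|\la iL_-h_2,i\Phi_\kappa\ra|
\lesssim \|h\|_{H^1}\|\Phi_\kappa\|_{H^1}
\lesssim \|h\|_\lambda\|\Phi_\kappa\|_\Sigma
\lesssim s^{-(2\kappa-1)},
\]
which is exactly the claimed order. All the remaining terms turn out to be of strictly lower order. For the weighted term I would split $y^2=y\cdot y$ and use Cauchy--Schwarz together with the inequality $\lambda^{1/2}\|yh\|_{L^2}\le\|h\|_\lambda$:
\[
|\la\lambda y^2h,i\Phi_\kappa\ra|\le\lambda\|yh\|_{L^2}\|y\Phi_\kappa\|_{L^2}
\lesssim\lambda^{1/2}\|h\|_\lambda\|\Phi_\kappa\|_\Sigma\lesssim s^{-1}s^{-(\kappa-2)}s^{-(\kappa+1)}=s^{-2\kappa}.
\]
For the Dirac term the trace bounds $|h(0)|\lesssim\|h\|_{H^1}$ and $|\Phi_\kappa(0)|\lesssim\|\Phi_\kappa\|_{H^1}$ yield $|\la\gamma\lambda\delta h,i\Phi_\kappa\ra|\lesssim\lambda\|h\|_{H^1}\|\Phi_\kappa\|_{H^1}\lesssim s^{-(2\kappa+1)}$. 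Finally, recalling that $R_Q(h)=(\dif f(P)-\dif f(Q))h+O(|h|^2)$ with $|P-Q|\lesssim\lambda$ exponentially localized, the embeddings $H^1\hookrightarrow L^p$ on the half-lines give $\|R_Q(h)\|_{L^2}\lesssim\lambda\|h\|_{L^2}+\|h\|_{H^1}^2\lesssim s^{-\kappa}$, whence $|\la R_Q(h),i\Phi_\kappa\ra|\le\|R_Q(h)\|_{L^2}\|\Phi_\kappa\|_{L^2}\lesssim s^{-(2\kappa+1)}$. Summing the four estimates and using $\kappa\ge7$ gives the lemma.

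The only point requiring a little care is the weighted term $\lambda y^2h$: because $y^2h$ is not controlled uniformly in $L^2$, one cannot bound $D_hH$ by a single norm and must keep the prefactor $\lambda$ attached. Exploiting that $\|\cdot\|_\lambda$ only controls $\lambda^{1/2}\|yh\|_{L^2}$ costs a factor $\lambda^{-1/2}\sim s$, but this loss is more than absorbed by $\lambda\sim s^{-2}$, so the term still closes below the target order. Every other estimate is a routine application of duality, the trace inequality, or the quadratic structure of $R_Q$, so I do not anticipate any genuine obstacle.
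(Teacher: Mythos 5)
Your proof is correct and takes essentially the same route as the paper: both expand $D_hH$ termwise (the paper keeps $-\partial^2_yh+h-\dif f(P)h$ where you regroup it as $L_+h_1+iL_-h_2$ plus a remainder absorbing $(\dif f(P)-\dif f(Q))h$, a cosmetic difference), pair each piece with $i\Phi_\kappa$, and close via \eqref{bootstrap_hyp} and \eqref{phiK_est}, the dominant contribution being $\|h\|_{H^1}\|\Phi_\kappa\|_{H^1}\lesssim s^{-(\kappa-2)}s^{-(\kappa+1)}=s^{-(2\kappa-1)}$. Your handling of the weighted term, keeping the factor $\lambda$ attached so that $\lambda\|yh\|_{L^2}\|y\Phi_\kappa\|_{L^2}\lesssim\lambda^{1/2}\|h\|_\lambda\|\Phi_\kappa\|_\Sigma$, is exactly the paper's mechanism as well.
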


\begin{proof}
We have
\begin{equation*}
( iD_hG,\Phi_\kappa)=-( D_hG,i\Phi_\kappa)
=-( H_{\gamma\lambda}h+h+\lambda y^2h
-\dif f(P)h-R(h),i\Phi_\kappa).
\end{equation*}
Hence, by~\eqref{bootstrap_hyp} and~\eqref{phiK_est},
\begin{align*}
|( iD_hG,\Phi_\kappa)|
&\le \|h\|_{H^1}\|\Phi_\kappa\|_{H^1}+\lambda\int_{\mathcal G} y^2|h||\Phi_\kappa|\diff y
+|(\dif f(P)h,\Phi_\kappa)|+|( R(h),\Phi_\kappa)| \\
&\lesssim \|h\|_{H^1}\|\Phi_\kappa\|_{H^1}+
s^{-2} \|yh\|_{L^2}\|y\Phi_\kappa\|_{L^2}\\
& \lesssim s^{-(2\kappa-1)}.
\end{align*}
\end{proof}

\begin{proof}[Proof of Proposition~\ref{dhds_below.prop}]
By~\eqref{D_hHh_s}, Lemmas~\ref{DhHy2h} to~\ref{DhHffi} and the fact that $s^{-1}=O(b)$, 
there exists $k_2>0$ such that, for all $s\in[s_\star,s_1]$,
\begin{equation*}
( D_hG(s,h(s)),h_s) \ge -b\,k_2\|h\|_{\lambda}^2+O(s^{-(2\kappa-1)}).
\end{equation*}
Together with Lemma~\ref{dsH.lem}, this gives Proposition~\ref{dhds_below.prop}.
\end{proof}

\begin{proposition}\label{monotonicity.prop}
Let $k_0$ and $k$ be as in Propositions~\ref{coercivity.prop} and~\ref{dhds_below.prop}, respectively. 
Choose $m\in\N$ such that $m> 2k/k_0$. Then, for all $s\in[s_\star,s_1]$,
\begin{equation*}
\frac{\dif S}{\dif s} \gtrsim \frac{b}{\lambda^m}\left(\|h\|_{\lambda}^2 
+ O(s^{-2(\kappa-1)})\right).
\end{equation*}
\end{proposition}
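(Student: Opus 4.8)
The plan is to differentiate $S=H/\lambda^m$ directly and to exploit the fact that the weight $\lambda^{-m}$ contributes a term of the right sign. The quotient rule gives
\[
\frac{\dif S}{\dif s} = \frac{1}{\lambda^m}\left(\frac{\dif H}{\dif s} - m\frac{\lambda_s}{\lambda}H\right).
\]
The whole reason for inserting $\lambda^{-m}$ is that, by the modulation estimate \eqref{mod_est} of Lemma~\ref{mod_est.lem}, the first component $b+\lambda_s/\lambda$ of $\mathrm{Mod}(s)$ is $O(s^{-\kappa})$, so $-m\lambda_s/\lambda = mb + O(s^{-\kappa})$. Since $b\sim 2/s$ dominates $s^{-\kappa}$ for $s_0$ large, this coefficient is positive, and differentiating the weight therefore produces an extra \emph{positive} multiple of $H$.

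First I would feed the coercivity bound \eqref{coercivityH}, namely $H\ge k_0\|h\|_{\lambda}^2 + O(s^{-2\kappa})$, into that positive term. Because $mb+O(s^{-\kappa})>0$, multiplying the two inequalities is legitimate and yields
\[
-m\frac{\lambda_s}{\lambda}H \ge m k_0\, b\|h\|_{\lambda}^2 + (\text{lower order}),
\]
where the lower-order contributions are $mb\cdot O(s^{-2\kappa})$, an $O(s^{-\kappa})\|h\|_{\lambda}^2$ term, and — using the upper bound $H\lesssim \|h\|_{\lambda}^2 + s^{-3(\kappa-2)}$ of Lemma~\ref{Hbounded.lem} — an $O(s^{-\kappa})H$ piece of size $O(s^{-4\kappa+6})$.

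Next I would insert the lower bound for $\dif H/\dif s$ from Proposition~\ref{dhds_below.prop}, which reads $\dif H/\dif s \ge -k\,b\|h\|_{\lambda}^2 + b\,O(s^{-2(\kappa-1)})$. Adding the two estimates gives
\[
\lambda^m\frac{\dif S}{\dif s} \ge (m k_0 - k)\,b\|h\|_{\lambda}^2 + b\,O(s^{-2(\kappa-1)}) + (\text{lower order}).
\]
Here lies the decisive step, and the point of the hypothesis $m\ge 2k/k_0$: it forces $mk_0-k\ge k>0$, so the coefficient of $b\|h\|_{\lambda}^2$ is strictly positive and bounded below. This is really the heart of the argument — the a priori sign-indefinite derivative of $H$ is dominated by the strictly positive term $mk_0\,b\|h\|_{\lambda}^2$ that the coercivity of $H$ generates through the weight $\lambda^{-m}$. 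I expect the main (if modest) obstacle to be precisely this constant-matching, together with checking the sign of $mb+O(s^{-\kappa})$ before multiplying the coercivity inequality.

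Finally I would verify that the residual errors are subordinate to $b\,O(s^{-2(\kappa-1)})=O(s^{-2\kappa+1})$. Using $b\sim s^{-1}$, $\lambda\sim s^{-2}$ and the bootstrap bound $\|h\|_{\lambda}^2\lesssim s^{-2(\kappa-2)}$ from \eqref{bootstrap_hyp}, one checks that $mb\,O(s^{-2\kappa})=O(s^{-2\kappa-1})$, $O(s^{-\kappa})\|h\|_{\lambda}^2=O(s^{-3\kappa+4})$ and $O(s^{-4\kappa+6})$ are all $\lesssim s^{-2\kappa+1}$ for $\kappa\ge 7$, hence absorbable into $b\,O(s^{-2(\kappa-1)})$. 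Dividing through by $\lambda^m>0$ then produces exactly $\dif S/\dif s \gtrsim \lambda^{-m}b\big(\|h\|_{\lambda}^2 + O(s^{-2(\kappa-1)})\big)$, as claimed.
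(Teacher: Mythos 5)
Your proposal is correct and follows essentially the same route as the paper: differentiate $S=H/\lambda^m$ by the quotient rule, use the modulation estimate $b+\lambda_s/\lambda=O(s^{-\kappa})$ to make $-m\lambda_s/\lambda$ a positive multiple of $b$, feed in the coercivity bound \eqref{coercivityH} and the lower bound of Proposition~\ref{dhds_below.prop}, and invoke $m\ge 2k/k_0$ to keep the coefficient of $b\|h\|_{\lambda}^2$ positive. The only cosmetic difference is that the paper uses the crude bound $-\lambda_s/\lambda\ge b/2$ (giving coefficient $\tfrac m2 k_0-k$) where you track the $O(s^{-\kappa})$ deviation explicitly (giving $mk_0-k\ge k$); your error bookkeeping is consistent with the paper's.
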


\begin{proof}
We have 
\begin{equation*}
\frac{\dif S}{\dif s}
=\frac{1}{\lambda^m}\left(-m\frac{\lambda_s}{\lambda}G+\frac{\dif G}{\dif s}\right).
\end{equation*}
Furthermore, for $s$ large enough, $-\lambda_s/\lambda\ge b/2$.
Hence, in view of Propositions~\ref{coercivity.prop} and~\ref{dhds_below.prop},
\begin{align*}
\frac{\dif S}{\dif s}
&\ge\frac{b}{\lambda^m}\left[\frac{m}{2}\left(k_0\|h\|_{\lambda}^2+ O(s^{-2(\kappa-1)})\right)
+\left(-k\|h\|_{\lambda}^2 + O(s^{-2(\kappa-1)})\right)\right] \\
&\gtrsim \frac{b}{\lambda^m}\left(\|h\|_{\lambda}^2 + O(s^{-2(\kappa-1)})\right),
\end{align*}
as claimed.
\end{proof}

\subsection{Uniform estimates in rescaled time}\label{sec:end_proof_sstar}

This section is entirely devoted to the proof of the uniform estimates in $s$. 
\begin{proof}[Proof of Proposition~\ref{prop:sstar}]
We will prove that estimates~\eqref{bootstrap_hyp} can be improved to
\eqref{improved_estimates_h} and~\eqref{improved_estimates_bl} 
on $[s_\star,s_1]$. Then, choosing $s_0$ large enough,
it follows by continuity that, in fact, $s_\star=s_0$, so that 
\eqref{improved_estimates_h} and~\eqref{improved_estimates_bl} hold on $[s_0,s_1]$.

We first prove~\eqref{improved_estimates_h}. By Proposition~\ref{coercivity.prop}
there exists a constant $a>1$ such that, for $s \in[s_\star,s_1]$
\begin{equation}\label{boundsforS}
\frac{1}{a}\frac{1}{\lambda^m}
\big( \|h\|_{\lambda}^2-a^2s^{-2(\kappa-1)}\big)
\le S(s,h)
\le \frac{a}{\lambda^m}\|h\|_{\lambda}^2.
\end{equation}
Choosing $a$ large enough, we also have by Proposition~\ref{monotonicity.prop}
\begin{equation}\label{posder}
\frac{\dif S}{\dif s} \ge 
\frac{1}{a}\frac{b}{\lambda^m}
\big( \|h\|_{\lambda}^2-a^2s^{-2(\kappa-1)}\big).
\end{equation}

Let 
$$
s_\dagger:=\inf\{s\in[s_\star,s_1] : 
\|h(\sigma)\|_{\lambda(\sigma)}\le 2a^2\sigma^{-(\kappa-1)} \ 
\forall\sigma\in[s_\dagger,s_1]\}.
$$
Since $h(s_1)=0$, it follows by continuity of $h$ in $\Sigma^1$ that $s_\dagger\in[s_\star,s_1)$. We prove that
$s_\dagger=s_\star$. 

Suppose by contradiction that
$s_\dagger>s_\star$. Then, in particular,
$\|h(s_\dagger)\|_{\lambda(s_\dagger)}
=2a^2s_\dagger^{-(\kappa-1)}$. Defining
$$
s_\ddagger:=\sup\{s\in[s_\dagger,s_1] : 
\|h(\sigma)\|_{\lambda(\sigma)}\ge a\sigma^{-(\kappa-1)} \ 
\forall\sigma\in[s_\dagger,s]\},
$$
we have $s_\star<s_\dagger<s_\ddagger<s_1$ and
$\|h(s_\ddagger)\|_{\lambda(s_\ddagger)}
=as_\ddagger^{-(\kappa-1)}$. Furthermore, by~\eqref{posder},
$S$ is non-decreasing on $[s_\dagger,s_\ddagger]$.
Hence, using~\eqref{boundsforS} and our
bootstrap assumption on $\lambda$ in~\eqref{bootstrap_hyp},
we find that
\begin{align*}
\|h(s_\dagger)\|_{\lambda(s_\dagger)}^2-a^2s_\dagger^{-2(\kappa-1)}
&\le a\lambda^m(s_\dagger)S(s_\dagger,h(s_\dagger))
\le a\lambda^m(s_\dagger)S(s_\ddagger,h(s_\ddagger)) \\
&\le a^2\frac{\lambda^m(s_\dagger)}{\lambda^m(s_\ddagger)}
    \|h(s_\ddagger)\|_{\lambda(s_\ddagger)}^2
=a^4\frac{\lambda^m(s_\dagger)}{\lambda^m(s_\ddagger)}
    s_\ddagger^{-2(\kappa-1)} \\
&\le 2a^4
\Big(\frac{s_\ddagger}{s_\dagger}\Big)^{2m}s_\ddagger^{-2(\kappa-1)} \le 
2a^4
\Big(\frac{s_\ddagger}{s_\dagger}\Big)^{2m}\Big(\frac{s_\ddagger}{s_\dagger}\Big)^{-2(\kappa-1)}s_\dagger^{-2(\kappa-1)} \le
2a^4s_\dagger^{-2(\kappa-1)},
\end{align*}
where the last inequality holds since we may choose $\kappa$ so large that $2m-2(\kappa-1)<0$.
It follows that 
$$
\|h(s_\dagger)\|_{\lambda(s_\dagger)}^2
\le a^2s_\dagger^{-2(\kappa-1)}+2a^4s_\dagger^{-2(\kappa-1)}
\le 3a^4s_\dagger^{-2(\kappa-1)},
$$
a contradiction.

We now prove~\eqref{improved_estimates_bl}.
Let $E^\star\in\R$, $\mathcal E^\star=C_Q^{-1}E^\star$ and
$b_1, \lambda_1$ be given by Proposition~\ref{prop:dynamic}. 
It follows from~\eqref{energy_nls_dynsys} that 
\begin{equation}\label{final_energy}
|E(\tilde P(b_1,\lambda_1,\theta_1))- E^\star|
\lesssim \frac{(b_1^2+\lambda_1)}{\lambda_1^2}^\kappa
\lesssim s_1^{4-2\kappa}.
\end{equation}
Now, the energy estimate~\eqref{deriv_energy} and the modulation estimate~\eqref{mod_est}
yield 
\begin{align}\label{integr_energy}
|E(\tilde P(b(s),\lambda(s),\theta(s)))-E(\tilde P(b_1,\lambda_1,\theta_1))|
&=\Big|\int_s^{s_1}\frac{\dif}{\dif\sigma}E(\tilde P(b(\sigma),\lambda(\sigma),\theta(\sigma)))\diff\sigma\Big| \notag \\
&\lesssim \int_s^{s_1}\sigma^{4-\kappa}\diff\sigma\lesssim s^{5-\kappa}, \quad s\in[s_\star,s_1].
\end{align}
It then follows by~\eqref{final_energy} and~\eqref{integr_energy} that
\begin{equation}\label{s_est_nrj}
|E(\tilde P(b(s),\lambda(s),\theta(s)))-E^\star|\lesssim s^{5-\kappa}, \quad s\in[s_\star,s_1].
\end{equation}
Next, using~\eqref{energy_nls_dynsys} at time $s$, we deduce that 
\begin{align}\label{variation_energy}
|\mathcal E(b(s),\lambda(s))-\mathcal E^\star|
&\lesssim
\Big| \mathcal E(b,\lambda)- \frac{E(\tilde P(b,\lambda,\theta))}{C_Q}\Big|
+\Big|\frac{E(\tilde P(b,\lambda,\theta))}{C_Q}-\mathcal E^\star \Big| \notag \\
&\lesssim
s^{4-2\kappa}+s^{5-\kappa}\lesssim s^{5-\kappa}, \quad s\in[s_\star,s_1],
\end{align}
and the formula~\eqref{asympt_nrj} defining $\mathcal E$ yields
$$
\lambda^2\mathcal E(b,\lambda) = b^2-2\alpha_\star\lambda+O(\lambda^2).
$$
For $\kappa\ge 9$, \eqref{variation_energy} implies
\begin{equation}\label{aaa}
|b^2-2\alpha_\star\lambda-\mathcal E^\star\lambda^2|\lesssim \lambda^2(s) + s^{5-\kappa} \lesssim s^{-4}
\end{equation}
and it follows that
$$
\big|b-\sqrt{2\alpha_\star\lambda+\mathcal E^\star\lambda^2}\big|
\big|b+\sqrt{2\alpha_\star\lambda+\mathcal E^\star\lambda^2}\big|
\lesssim s^{-4}.
$$
Hence, by~\eqref{bootstrap_hyp},
\begin{equation}\label{bbb}
\big|b-\sqrt{2\alpha_\star\lambda+\mathcal E^\star\lambda^2}\big|
\lesssim s^{-3}, \quad s\in[s_\star,s_1].
\end{equation}
From~\eqref{mod_est}, we have $b=-\lambda_s/\lambda+O(s^{-\kappa})$, hence
$$
\Big|\frac{\lambda_s}{\lambda}+\sqrt{2\alpha_\star\lambda+\mathcal E^\star\lambda^2}\Big|
\lesssim s^{-3}
$$
and we deduce from the definition of $\mathcal F$ in~\eqref{defofF} that
$$
\Big| \frac{\dif}{\dif s}\mathcal F(\lambda(s))-1 \Big|
=\Big| \frac{\lambda_s}{\lambda\sqrt{2\alpha_\star\lambda+\mathcal E^\star\lambda^2}}+1 \Big|
=\frac{1}{\sqrt{2\alpha_\star\lambda+\mathcal E^\star\lambda^2}}
\Big| \frac{\lambda_s}{\lambda}+ \sqrt{2\alpha_\star\lambda+\mathcal E^\star\lambda^2}\Big|
\lesssim s^{-2}.
$$
Integrating from $s$ to $s_1$ and using $\mathcal F(\lambda(s_1))=s_1$ yields
$$
\Big| \mathcal F(\lambda(s))-s \Big|
\le \Big| \int_s^{s_1}\Big(\frac{\dif}{\dif \sigma}\mathcal F(\lambda(\sigma))-1\Big)\diff\sigma\Big|
\lesssim s^{-1}
$$
$$
\implies \mathcal F(\lambda(s))=s+O(s^{-1}), \quad s\in[s_\star,s_1].
$$
On the other hand, it follows from~\eqref{Fl_est} that
$$
\Big|\frac{\lambda(s)^{1/2}}{\lambda_\mathrm{mo}(s)^{1/2}}-1\Big|\lesssim \frac{1}{s^2},
\quad s\in[s_\star,s_1].
$$
Finally, returning to~\eqref{bbb} and using again~\eqref{bootstrap_hyp},
\begin{align*}
b-b_\mathrm{mo}
&=\sqrt{2\alpha_\star\lambda+\mathcal E^\star\lambda^2}-\sqrt{2\alpha_\star\lambda_\mathrm{mo}}+O(s^{-3})
=\frac{2\alpha_\star\lambda+\mathcal E^\star\lambda^2-2\alpha_\star\lambda_\mathrm{mo}}
{\sqrt{2\alpha_\star\lambda+\mathcal E^\star\lambda^2}+\sqrt{2\alpha_\star\lambda_\mathrm{mo}}}+O(s^{-3}) \\
&=O(b_\mathrm{mo})[\mathcal E^\star\lambda^2
+2\alpha_\star(\lambda^{1/2}-\lambda_\mathrm{mo}^{1/2})(\lambda^{1/2}+\lambda_\mathrm{mo}^{1/2})]
+O(s^{-3})
\end{align*}
$$
\implies \Big|\frac{b(s)}{b_\mathrm{mo}(s)}-1\Big|\lesssim \frac{1}{s^2},
\quad s\in[s_\star,s_1].
$$
This concludes the proof. 
\end{proof}

\begin{remark}\label{rem:improved_est_3}
We observe here that estimates~\eqref{improved_estimates_bl} can be improved by a closer inspection
of the energy expansion $\mathcal E$. 
Indeed,~\eqref{asympt_nrj} yields
$$
\lambda^2\mathcal E(b,\lambda) = b^2-2\alpha_\star\lambda+e_0\lambda^2+O(s^{-6}),
\quad e_0:=\eps_{0,1}+2\alpha_\star\eps_{2,0}.
$$
Hence, using~\eqref{variation_energy} and choosing $\kappa\ge 11$, estimate~\eqref{aaa} improves to
\begin{equation*}
|b^2-2\alpha_\star\lambda-(\mathcal E^\star-e_0)\lambda^2|\lesssim s^{-6}+ s^{5-\kappa}\lesssim s^{-6}.
\end{equation*}
Then, replacing $\mathcal E^\star$ by $\mathcal E^\star-e_0$ in Proposition~\ref{prop:dynamic}
and using this improved estimate in the rest of the proof yields~\eqref{better_improved_estimates_bl}.
\end{remark}

\appendix

\section{Dynamical systems}\label{sec:dynamical_systems}

In this appendix, we prove Proposition \ref{prop:dynamic}.
Given $s_1 \geq 1$, we consider the system
\begin{equation}
  \label{eq:sys-in-s}
  \begin{cases}
         b+\disp\frac{ \lambda_s}{ \lambda}=0,\\
     b_s+ b^2-\alpha_\star \lambda=0,
  \end{cases}
\end{equation}
with initial data
\begin{equation}
    \label{eq:init-cond-s_0}
 b(s_1)=b_1,\quad  \lambda(s_1)=\lambda_1. 
\end{equation}
This is a Hamiltonian system, with conserved energy
\[
\mathcal E_\mathrm{mo} (b,\lambda)=\left(\frac{b}{\lambda}\right)^2-\frac{2\alpha_\star}{\lambda}.
\]
An exact solution with energy $\mathcal E_\mathrm{mo} = 0$ is given by
\begin{equation}\label{model_solution_2}
b_\mathrm{mo}(s)=\frac2s, \quad \lambda_\mathrm{mo}(s)=\frac{2}{\alpha_\star s^2}.
\end{equation}

\begin{lemma} \label{lem:universal}
Let $b_1,\lambda_1>0$. The solution of the 
Cauchy problem~\eqref{eq:sys-in-s}-\eqref{eq:init-cond-s_0} is given by
  \begin{align*}
   b(s)
   &=\left(\alpha_\star (s-s_1)+\frac{b_1}{\lambda_1}\right) 
   \left(\frac12 \alpha_\star (s-s_1)^2+\frac{b_1}{\lambda_1}(s-s_1)+\frac{1}{\lambda_1}\right)^{-1}=b_\mathrm{mo}(s)+O(s^{-2}),\\
   \lambda (s)&= \left(\frac12 \alpha_\star (s-s_1)^2+\frac{b_1}{\lambda_1}(s-s_1)+\frac{1}{\lambda_1}\right)^{-1} = \lambda_\mathrm{mo}(s)+O(s^{-4}).
  \end{align*}
\end{lemma}

\begin{proof}
Defining the auxiliary unkown $\mu$ by
\[
 \mu=\frac{1}{ \lambda},
\]
direct calculations using~\eqref{eq:sys-in-s} yield
\[
 \mu_s=-\frac{ \lambda_s/ \lambda}{ \lambda}=\frac{ b}{ \lambda},\quad  
 \mu_{ss}=\left(\frac{ b}{ \lambda}\right)_s
 =\frac{ b_s- b \lambda_s/ \lambda}{ \lambda}=\frac{ b_s+ b^2}{ \lambda}=\alpha_\star.
\]
Integrating in $s$, we get
\[
 \mu_s(s)=\alpha_\star (s-s_1)+\frac{b_1}{\lambda_1},\quad  
 \mu(s)=\frac12 \alpha_\star (s-s_1)^2+\frac{b_1}{\lambda_1}(s-s_1)+\frac{1}{\lambda_1},
\]
and the result follows.
\end{proof}

We can now prove Proposition \ref{prop:dynamic}. Let us first explain how the function $\mathcal F$ comes into play. Observe that $\mathcal F$ is strictly decreasing on $(0,\lambda_0]$, vanishes at $\lambda_0$ and, since 
$$
\frac{1}{\mu^{3/2}\sqrt{\mathcal E^\star\mu+2\alpha_\star}} 
\sim \frac{1}{\sqrt{2\alpha_\star}\mu^{3/2}}, \quad \mu\to0^+,
$$
$\mathcal F(\lambda)\to +\infty$ as $\lambda\to 0^+$. Thus, $\mathcal F$ is a bijection from $(0,\lambda_0]$ to $[0,+\infty)$. Now let $s_0 \geq 0$ and assume that $(b,\lambda) : [s_0,+\infty) \to \R \times \R_+^*$ is a solution of energy $\mathcal E^\star$ of \eqref{eq:sys-in-s} such that $\lambda(s_0) = \lambda_0$ and $\mathcal E^\star \lambda^2+2\alpha_\star\lambda > 0$ for all $s \geq s_0$. Then we have 
\begin{equation}\label{integr_dyn_sys_1}
b=\sqrt{\mathcal E^\star \lambda^2+2\alpha_\star\lambda}, \quad b+\disp\frac{ \lambda_s}{ \lambda}=0,
\end{equation}
which gives, for all $s \geq s_0$,
\begin{equation}\label{integr_dyn_sys_2}
\mathcal F(\lambda(s)) = - \int_{s_0}^{s}\frac{\lambda_\sigma(\sigma)}
{\lambda(\sigma)^{3/2}\sqrt{\mathcal E^\star\lambda(\sigma)+2\alpha_\star}}\diff\sigma=s-s_0.
\end{equation}
Conversely, if $\lambda$ is a solution of \eqref{integr_dyn_sys_2}, then we get a solution of energy $\mathcal E^\star$ by returning to the first equation in~\eqref{integr_dyn_sys_1}.

We use $\mathcal F$ to construct the final data $(b_1,\lambda_1)$ used in \eqref{eq:definition_of_u_1} (notice that the definitions of $\lambda_1$ and then $b_1$ will depend on the choice of $\lambda_0$ in the definition of $\mathcal F$, but this does not alter the rates of decay in the estimates).
 
\begin{proof}[Proof of Proposition~\ref{prop:dynamic}]
For any $s_1>0$, there exists a unique $\lambda_1>0$ such that 
$\mathcal F(\lambda_1)=s_1$. For the model system,~\eqref{model_solution_2} yields
\begin{equation*}
s_1=\sqrt{\frac{2}{\alpha_\star \lambda_\mathrm{mo}(s_1)}}.
\end{equation*}
On the other hand, for any $\lambda \in (0,\lambda_0]$, we have
\begin{align}\label{Fl_est}
\left|\mathcal F(\lambda)- \sqrt{\frac{2}{\alpha_\star \lambda}} \right| 
&= \left| \int_\lambda^{\lambda_0}\frac{\diff\mu}{\mu^{3/2}\sqrt{\mathcal E^\star\mu+2\alpha_\star}}
	-\sqrt{\frac{2}{\alpha_\star}}\lambda^{-1/2} \right|\notag \\
&\le  \left| \int_\lambda^{\lambda_0}\frac{\diff\mu}{\mu^{3/2}\sqrt{\mathcal E^\star\mu+2\alpha_\star}}
	-\sqrt{\frac{2}{\alpha_\star}}\Big(\lambda^{-1/2}-\lambda_0^{-1/2}\Big) \right| 
	 + \sqrt{\frac{2}{\alpha_\star}}\lambda_0^{-1/2}\notag\\
&\le  \left| \int_\lambda^{\lambda_0}\frac{1}{\mu^{3/2}}
	\Big(\frac{1}{\sqrt{\mathcal E^\star\mu+2\alpha_\star}}
	-\frac{1}{\sqrt{2\alpha_\star}}\Big)\diff\mu \right|  + \sqrt{\frac{2}{\alpha_\star}}\lambda_0^{-1/2}\notag \\
&\lesssim \int_\lambda^{\lambda_0}\frac{\diff\mu}{\mu^{1/2}}+1=O_{\lambda\to 0}(1).
\end{align}
Hence, with $\lambda=\lambda_1$, we get
$$
\left|\sqrt{\frac{2}{\alpha_\star \lambda_\mathrm{mo}(s_1)}} - \sqrt{\frac{2}{\alpha_\star \lambda_1}} \right|=\left| s_1- \sqrt{\frac{2}{\alpha_\star \lambda_1}} \right|=O(1).
$$
After some algebra, this yields
\begin{equation*}\label{l1est}
\Big|\frac{\lambda_1^{1/2}}{\lambda_\mathrm{mo}(s_1)^{1/2}}-1\Big|
\lesssim \lambda_\mathrm{mo}(s_1)^{1/2} \lesssim \frac{1}{s_1}.
\end{equation*}
This also gives 
\begin{equation*}
\big|\lambda_1^{1/2}-\lambda_\mathrm{mo}(s_1)^{1/2}\big|
\lesssim \lambda_\mathrm{mo}(s_1) \lesssim \frac{1}{s_1^2}
\end{equation*}
or 
\begin{equation*}
\big|\lambda_1-\lambda_\mathrm{mo}(s_1)\big| \lesssim \frac{1}{s_1^3}.
\end{equation*}

To find $b_1$, we set
\[
g(b)=\lambda_1^2\mathcal E(b,\lambda_1) = b^2 - 2\lambda_1 \alpha_\star + \sum_{\substack{(j,k) \in \Theta_\kappa\\ j \text{ even}, \ j/2+k\ge2}}
\eps_{j,k} b^j\lambda_1^{k}
\]
(see \eqref{asympt_nrj}).
We then seek a solution of $g(b) = \lambda_1^2\mathcal E^\star$ close to $b_\mathrm{mo}(s_1)=2/s_1$. We have 
\[
g(b_\mathrm{mo}(s_1)) = 2\alpha_\star ((\lambda_\mathrm{mo}(s_1)-\lambda_1) + O(s_1^{-4}) = O(s_1^{-3}).
\]
Furthermore, for $b \in \big[b_\mathrm{mo}(s_1)- \frac 1 {s_1^2},b_\mathrm{mo}(s_1) + \frac 1 {s_1^2} \big]$ we have, if $s_1$ is large enough, 
\begin{align*}
g'(b) = 2b + O(s_1^{-3}) \geq \frac 1 {s_1}.
\end{align*}
Since $\lambda_1^2\mathcal E^\star=O(s_1^{-4})$, if $s_1$ is large enough then
there exists a unique $b_1>0$ such that
\begin{equation*}
g(b_1)=\lambda_1^2\mathcal E^\star \quad\text{and}\quad
|b_1-b_\mathrm{mo}(s_1)| \leq \frac 1 {s_1^2}.
\end{equation*}
It follows that
\begin{equation*}
\Big|\frac{b_1}{b_\mathrm{mo}(s_1)}-1\Big|\leq \frac{1}{2s_1},
\end{equation*}
which finishes the proof.
\end{proof}

\bibliographystyle{abbrv}
\bibliography{master}

\ifx \undefined \booktitle \def \booktitle#1{{{\em #1}}} \fi\ifx \cftil
  \undefined \def \cftil#1{\~#1} \fi\ifx \undefined \cprime \def \cprime {$'$}
  \fi\ifx \undefined \flqq \def \flqq {\ifmmode \ll \else \leavevmode \raise
  0.2ex \hbox{$\scriptscriptstyle \ll $}\fi}\fi\ifx \undefined \frqq \def \frqq
  {\ifmmode \gg \else \leavevmode \raise 0.2ex \hbox{$\scriptscriptstyle \gg
  $}\fi}\fi\ifx \undefined \k \let \k = \c \fi\ifx \undefined \mathbb \def
  \mathbb #1{{\bf #1}}\fi\ifx \undefined \mathbf \def \mathbf #1{{\bf
  #1}}\fi\ifx \undefined \mathrm \def \mathrm #1{{\rm #1}}\fi\ifx \undefined
  \pkg \def \pkg #1{{{\tt #1}}} \fi\ifx \undefined \scr \let \scr = \cal
  \fi\def\cprime{$'$} \def\cprime{$'$}
\begin{thebibliography}{10}

\bibitem{AdCaFiNo11}
R.~Adami, C.~Cacciapuoti, D.~Finco, and D.~Noja.
\newblock Fast solitons on star graphs.
\newblock {\em Rev. Math. Phys.}, 23(4):409--451, 2011.

\bibitem{AdCaFiNo12}
R.~Adami, C.~Cacciapuoti, D.~Finco, and D.~Noja.
\newblock On the structure of critical energy levels for the cubic focusing
  {NLS} on star graphs.
\newblock {\em J. Phys. A}, 45(19):192001, 7, 2012.

\bibitem{adami2012stationary}
R.~Adami, C.~Cacciapuoti, D.~Finco, and D.~Noja.
\newblock {Stationary states of NLS on star graphs}.
\newblock {\em EPL (Europhysics Letters)}, 100(1):10003, 2012.

\bibitem{AdCaFiNo14a}
R.~Adami, C.~Cacciapuoti, D.~Finco, and D.~Noja.
\newblock Constrained energy minimization and orbital stability for the {NLS}
  equation on a star graph.
\newblock {\em Ann. Inst. H. Poincar\'e Anal. Non Lin\'eaire},
  31(6):1289--1310, 2014.

\bibitem{AdCaFiNo14}
R.~Adami, C.~Cacciapuoti, D.~Finco, and D.~Noja.
\newblock Variational properties and orbital stability of standing waves for
  {NLS} equation on a star graph.
\newblock {\em J. Differential Equations}, 257(10):3738--3777, 2014.

\bibitem{AdCaFiNo16}
R.~Adami, C.~Cacciapuoti, D.~Finco, and D.~Noja.
\newblock Stable standing waves for a {NLS} on star graphs as local minimizers
  of the constrained energy.
\newblock {\em J. Differential Equations}, 260(10):7397--7415, 2016.

\bibitem{AdSeTi16}
R.~Adami, E.~Serra, and P.~Tilli.
\newblock Threshold phenomena and existence results for {NLS} ground states on
  metric graphs.
\newblock {\em J. Funct. Anal.}, 271(1):201--223, 2016.

\bibitem{AdSeTi17a}
R.~Adami, E.~Serra, and P.~Tilli.
\newblock Negative energy ground states for the {$L^2$}-critical {NLSE} on
  metric graphs.
\newblock {\em Comm. Math. Phys.}, 352(1):387--406, 2017.

\bibitem{AdSeTi17b}
R.~Adami, E.~Serra, and P.~Tilli.
\newblock Nonlinear dynamics on branched structures and networks.
\newblock {\em Riv. Math. Univ. Parma (N.S.)}, 8(1):109--159, 2017.

\bibitem{AmBcMe21}
K.~Ammari, A.~Bchatnia, and N.~Mehenaoui.
\newblock Exponential stability for the nonlinear {S}chr\"{o}dinger equation on
  a star-shaped network.
\newblock {\em Z. Angew. Math. Phys.}, 72(1):Paper No. 35, 19, 2021.

\bibitem{AnCa19}
J.~Angulo~Pava and M.~Cavalcante.
\newblock {\em Nonlinear Dispersive Equations on Star Graphs}, volume~32 of
  {\em Brazilian Mathematics Colloquium}.
\newblock Instituto Nacional de Matemática Pura e Aplicada, Rio de Janeiro,
  Brasil, 2019.

\bibitem{AnGo18a}
J.~Angulo~Pava and N.~Goloshchapova.
\newblock Extension theory approach in the stability of the standing waves for
  the {NLS} equation with point interactions on a star graph.
\newblock {\em Adv. Differential Equations}, 23(11-12):793--846, 2018.

\bibitem{AnGo18b}
J.~Angulo~Pava and N.~Goloshchapova.
\newblock On the orbital instability of excited states for the {NLS} equation
  with the {$\delta$}-interaction on a star graph.
\newblock {\em Discrete Contin. Dyn. Syst.}, 38(10):5039--5066, 2018.

\bibitem{AoInMi21}
K.~Aoki, T.~Inui, and H.~Mizutani.
\newblock Failure of scattering to standing waves for a {S}chr\"{o}dinger
  equation with long-range nonlinearity on star graph.
\newblock {\em J. Evol. Equ.}, 21(1):297--312, 2021.

\bibitem{Ar17}
A.~H. Ardila.
\newblock Logarithmic {NLS} equation on star graphs: existence and stability of
  standing waves.
\newblock {\em Differential Integral Equations}, 30(9-10):735--762, 2017.

\bibitem{BaCaDu11}
V.~Banica, R.~Carles, and T.~Duyckaerts.
\newblock Minimal blow-up solutions to the mass-critical inhomogeneous {NLS}
  equation.
\newblock {\em Comm. Partial Differential Equations}, 36(3):487--531, 2011.

\bibitem{BaVi16}
V.~Banica and N.~Visciglia.
\newblock Scattering for {NLS} with a delta potential.
\newblock {\em J. Differential Equations}, 260(5):4410--4439, 2016.

\bibitem{Grafidi}
C.~Besse, R.~Duboscq, and S.~Le~Coz.
\newblock Grafidi.
\newblock {\em PLMlab repository},
  \url{https://plmlab.math.cnrs.fr/cbesse/grafidi}, 2021.

\bibitem{BeDuLe22A}
C.~Besse, R.~Duboscq, and S.~Le~Coz.
\newblock Gradient flow approach to the calculation of stationary states on
  nonlinear quantum graphs.
\newblock {\em Ann. Henri Lebesgue}, 5:387--428, 2022.

\bibitem{BeDuLe22B}
C.~Besse, R.~Duboscq, and S.~Le~Coz.
\newblock Numerical simulations on nonlinear quantum graphs with the {GraFiDi}
  library.
\newblock {\em SMAI J. Comput. Math.}, 8:1--47, 2022.

\bibitem{BoWa97}
J.~Bourgain and W.~Wang.
\newblock Construction of blowup solutions for the nonlinear {S}chr\"odinger
  equation with critical nonlinearity.
\newblock {\em Ann. Scuola Norm. Sup. Pisa Cl. Sci. (4)}, 25(1-2):197--215
  (1998), 1997.
\newblock Dedicated to Ennio De Giorgi.

\bibitem{ChGuNaTs07}
S.-M. Chang, S.~Gustafson, K.~Nakanishi, and T.-P. Tsai.
\newblock Spectra of linearized operators for {NLS} solitary waves.
\newblock {\em SIAM J. Math. Anal.}, 39(4):1070--1111, 2007/08.

\bibitem{FuJe08}
R.~Fukuizumi and L.~Jeanjean.
\newblock Stability of standing waves for a nonlinear {S}chr\"odinger equation
  with a repulsive {D}irac delta potential.
\newblock {\em Discrete Contin. Dyn. Syst.}, 21(1):121--136, 2008.

\bibitem{FuOhOz08}
R.~Fukuizumi, M.~Ohta, and T.~Ozawa.
\newblock Nonlinear {S}chr\"odinger equation with a point defect.
\newblock {\em Ann. Inst. H. Poincar\'e Anal. Non Lin\'eaire}, 25(5):837--845,
  2008.

\bibitem{GeMaWe16}
F.~Genoud, B.~A. Malomed, and R.~M. Weish{\"a}upl.
\newblock Stable {NLS} solitons in a cubic-quintic medium with a delta-function
  potential.
\newblock {\em Nonlinear Anal.}, 133:28--50, 2016.

\bibitem{Gol19}
N.~Goloshchapova.
\newblock On the standing waves of the {NLS}-log equation with a point
  interaction on a star graph.
\newblock {\em J. Math. Anal. Appl.}, 473(1):53--70, 2019.

\bibitem{Go22}
N.~Goloshchapova.
\newblock Dynamical and variational properties of the {NLS}-{$\delta'_s$}
  equation on the star graph.
\newblock {\em J. Differential Equations}, 310:1--44, 2022.

\bibitem{GoOh20}
N.~Goloshchapova and M.~Ohta.
\newblock {Blow-up and strong instability of standing waves for the
  NLS-$\delta$ equation on a star graph}.
\newblock {\em Nonlinear Analysis}, 196:111753, 2020.

\bibitem{GuLeTs17}
S.~Gustafson, S.~Le~Coz, and T.-P. Tsai.
\newblock Stability of periodic waves of 1{D} cubic nonlinear {S}chr\"odinger
  equations.
\newblock {\em Appl. Math. Res. Express. AMRX}, 2:431--487, 2017.

\bibitem{IaLeRo17}
I.~Ianni, S.~Le~Coz, and J.~Royer.
\newblock On the {C}auchy problem and the black solitons of a singularly
  perturbed {G}ross-{P}itaevskii equation.
\newblock {\em SIAM J. Math. Anal.}, 49(2):1060--1099, 2017.

\bibitem{Ka19}
A.~Kairzhan.
\newblock Orbital instability of standing waves for {NLS} equation on star
  graphs.
\newblock {\em Proc. Amer. Math. Soc.}, 147(7):2911--2924, 2019.

\bibitem{KaPeGo19}
A.~Kairzhan, D.~E. Pelinovsky, and R.~H. Goodman.
\newblock Drift of spectrally stable shifted states on star graphs.
\newblock {\em SIAM J. Appl. Dyn. Syst.}, 18(4):1723--1755, 2019.

\bibitem{kato}
T.~Kato.
\newblock {\em Perturbation Theory for Linear Operators}.
\newblock Classics in Mathematics. Springer, second edition, 1980.

\bibitem{KrLeRa13}
J.~Krieger, E.~Lenzmann, and P.~Rapha{\"e}l.
\newblock Nondispersive solutions to the {$L^2$}-critical half-wave equation.
\newblock {\em Arch. Ration. Mech. Anal.}, 209(1):61--129, 2013.

\bibitem{KrSc09}
J.~Krieger and W.~Schlag.
\newblock Non-generic blow-up solutions for the critical focusing {NLS} in
  1-{D}.
\newblock {\em J. Eur. Math. Soc. (JEMS)}, 11(1):1--125, 2009.

\bibitem{LeFuFiKsSi08}
S.~Le~Coz, R.~Fukuizumi, G.~Fibich, B.~Ksherim, and Y.~Sivan.
\newblock Instability of bound states of a nonlinear {S}chr\"odinger equation
  with a {D}irac potential.
\newblock {\em Phys. D}, 237(8):1103--1128, 2008.

\bibitem{LeMaRa16}
S.~Le~Coz, Y.~Martel, and P.~Rapha\"el.
\newblock {Minimal mass blow up solutions for a double power nonlinear
  Schr\"{o}dinger equation}.
\newblock {\em Rev. Mat. Iberoam.}, 32(3):795--833, 2016.

\bibitem{LiLiSh18}
Y.~Li, F.~Li, and J.~Shi.
\newblock Ground states of nonlinear {S}chr\"{o}dinger equation on star metric
  graphs.
\newblock {\em J. Math. Anal. Appl.}, 459(2):661--685, 2018.

\bibitem{MaPi17}
Y.~Martel and D.~Pilod.
\newblock Construction of a minimal mass blow up solution of the modified
  {B}enjamin-{O}no equation.
\newblock {\em Math. Ann.}, 369(1-2):153--245, 2017.

\bibitem{Ma20a}
N.~Matsui.
\newblock Remarks on minimal mass blow up solutions for a double power
  nonlinear {S}chr\"odinger equation.
\newblock {\em arXiv preprint arXiv:2012.14562}, 2020.

\bibitem{Ma21c}
N.~Matsui.
\newblock Minimal mass blow-up solutions for double power nonlinear
  {S}chr\"odinger equations with an inverse potential.
\newblock {\em arXiv preprint arXiv:2109.08840}, 2021.

\bibitem{Ma21e}
N.~Matsui.
\newblock Minimal mass blow-up solutions for nonlinear {S}chr\"odinger
  equations with a {H}artree nonlinearity.
\newblock {\em arXiv preprint arXiv:2111.08443}, 2021.

\bibitem{Ma21d}
N.~Matsui.
\newblock Minimal mass blow-up solutions for nonlinear {S}chr\"odinger
  equations with a singular potential.
\newblock {\em arXiv preprint arXiv:2110.12980}, 2021.

\bibitem{Ma21a}
N.~Matsui.
\newblock Minimal-mass blow-up solutions for nonlinear {S}chr\"{o}dinger
  equations with an inverse potential.
\newblock {\em Nonlinear Anal.}, 213:Paper No. 112497, 32, 2021.

\bibitem{Ma21b}
N.~Matsui.
\newblock Minimal-mass blow-up solutions for nonlinear {S}chr\"odinger
  equations with growth potentials.
\newblock {\em arXiv preprint arXiv:2108.06205}, 2021.

\bibitem{Ma23}
N.~Matsui.
\newblock Minimal-mass blow-up solutions for inhomogeneous nonlinear
  {Schr{\"o}dinger} equations with growing potentials.
\newblock {\em Ark. Mat.}, 61(2):413--436, 2023.

\bibitem{Ma20b}
N.~Matsui.
\newblock Minimal mass blow-up solutions for nonlinear {Schr{\"o}dinger}
  equations with a potential.
\newblock {\em T{\^o}hoku Math. J. (2)}, 75(2):215--232, 2023.

\bibitem{Me93}
F.~Merle.
\newblock Determination of blow-up solutions with minimal mass for nonlinear
  {S}chr\"odinger equations with critical power.
\newblock {\em Duke Math. J.}, 69(2):427--454, 1993.

\bibitem{Me96}
F.~Merle.
\newblock Nonexistence of minimal blow-up solutions of equations {$iu_t=-\Delta
  u-k(x)|u|^{4/N}u$} in {${\bf R}^N$}.
\newblock {\em Ann. Inst. H. Poincar\'{e} Phys. Th\'{e}or.}, 64(1):33--85,
  1996.

\bibitem{No14}
D.~Noja.
\newblock Nonlinear {S}chr\"odinger equation on graphs: recent results and open
  problems.
\newblock {\em Philos. Trans. R. Soc. Lond. Ser. A Math. Phys. Eng. Sci.},
  372(2007):20130002, 20, 2014.

\bibitem{RaSz11}
P.~Rapha{\"e}l and J.~Szeftel.
\newblock Existence and uniqueness of minimal blow-up solutions to an
  inhomogeneous mass critical {NLS}.
\newblock {\em J. Amer. Math. Soc.}, 24(2):471--546, 2011.

\bibitem{SoMaSaSaNa10}
Z.~A. Sobirov, D.~U. Matrasulov, K.~K. Sabirov, S.-i. Sawada, and K.~Nakamura.
\newblock Integrable nonlinear schr\"odinger equation on simple networks:
  Connection formula at vertices.
\newblock {\em Phys. Rev. E}, 81:066602, Jun 2010.

\bibitem{TaXu21}
X.~Tang and G.~Xu.
\newblock Minimal mass blow-up solutions for the {$L^2$}-critical {NLS} with
  the delta potential for even data in one dimension.
\newblock {\em SIAM J. Math. Anal.}, 56(2):1727--1769, 2024.

\bibitem{We85}
M.~I. Weinstein.
\newblock Modulational stability of ground states of nonlinear {S}chr\"odinger
  equations.
\newblock {\em SIAM J. Math. Anal.}, 16:472--491, 1985.

\end{thebibliography}

\end{document}